\newtheorem{theorem}{Theorem}[section]
\newtheorem{lemma}[theorem]{Lemma}
\newtheorem{proposition}[theorem]{Proposition}
\newtheorem{corollary}[theorem]{Corollary}
\theoremstyle{definition}
\newtheorem{definition}[theorem]{Definition}
\newtheorem{problem}[theorem]{Problem}
\theoremstyle{remark}
\newsavebox{\proofbox}
\savebox{\proofbox}{\begin{picture}(7,7)  \put(0,0){\framebox(7,7){}}\end{picture}}
\newcommand{\md}[1]{\ensuremath{\,(\operatorname{mod}\, #1)}}
\newcommand{\mdsub}[1]{\ensuremath{(\mbox{\scriptsize mod}\, #1)}}
\newcommand{\mdlem}[1]{\ensuremath{(\mbox{\textup{mod}}\, #1)}}
\renewcommand{\leq}{\leqslant}
\renewcommand{\geq}{\geqslant}
\newcommand\im{\operatorname{im}}
\newcommand\Lip{\operatorname{Lip}}
\newcommand\tor{\operatorname{tor}}
\newcommand\sml{\operatorname{sml}}
\newcommand\unf{\operatorname{unf}}
\newcommand\disc{\operatorname{disc}}
\def\R{\mathbb{R}}
\def\C{\mathbb{C}}
\def\Z{\mathbb{Z}}
\def\E{\mathbb{E}}
\def\Q{\mathbb{Q}}
\def\N{\mathbb{N}}
\def\T{\mathbb{T}}
\def\eps{\varepsilon}
\numberwithin{equation}{section}
\begin{document}

\title[Sets of integers with no large sum-free subset]{Sets of integers with no large sum-free subset}



\author{Sean Eberhard}
\address{Centre for Mathematical Sciences, Wilberforce Road, Cambridge CB3 0WA}
\email{s.eberhard@dpmms.cam.ac.uk}

\author{Ben Green}
\address{Centre for Mathematical Sciences, Wilberforce Road, Cambridge CB3 0WA}
\email{b.j.green@dpmms.cam.ac.uk}

\author{Freddie Manners}
\address{Centre for Mathematical Sciences, Wilberforce Road, Cambridge CB3 0WA}
\email{frwm2@cam.ac.uk}

\onehalfspace

\begin{abstract}
Answering a question of P.~Erd\H{o}s from 1965, we show that for every $\eps > 0$ there is a set $A$ of $n$ integers with the following property: every set $A' \subset A$ with at least $\left(\frac{1}{3} + \eps\right) n$ elements contains three distinct elements $x,y,z$ with $x + y = z$.
\end{abstract}

\maketitle

\tableofcontents

\section{Introduction}

\label{sec1}

An old argument of Erd\H{o}s~\cite{erdos1} shows that every set $A$ of $n$ nonzero integers contains a subset $A^\prime\subset A$ of size $|A^\prime|\geq \frac{1}{3}n$ which is \emph{sum-free}, meaning $x+y=z$ has no solutions with $x,y,z\in A^\prime$. The argument is simple: for $\theta\in\R/\Z$ the set $A_\theta$ of $x\in A$ such that $\frac{1}{3}<\{\theta x\}<\frac{2}{3}$ is clearly sum-free, and if $\theta$ is chosen uniformly at random then the expected size of $A_\theta$ is $\frac{1}{3}n$, so $|A_\theta|\geq\frac{1}{3}n$ for some $\theta$.

Let $f(n)$ be the largest $k$ such that every set of $n$ nonzero integers contains a sum-free subset of size $k$. Erd\H{o}s's lower bound $f(n)\geq \frac{1}{3}n$ has not been much improved. As pointed out by Alon and Kleitman~\cite{alonkleitman}, Erd\H{o}s's argument can be modified to show $f(n)\geq \frac{1}{3}(n+1)$: if $\theta\approx 0$ then $A_\theta$ is empty, so for some $\theta$ we must actually have $|A_\theta|>\frac{1}{3}n$, so $|A_\theta|\geq\frac{1}{3}(n+1)$. The best known lower bound is due to Bourgain~\cite{bourgain}, who showed $f(n)\geq\frac{1}{3}(n+2)$ for $n\geq 3$ using an elaborate Fourier-analytic technique. In particular it is unknown whether $f(n)\geq \frac{1}{3}n + \omega(n)$ for some $\omega(n)\to\infty$, though this seems likely.

In the opposite direction, considering the largest element of a subset $A \subset \{1,\ldots,n\}$ gives an obvious upper bound of $f(n)\leq \frac{1}{2}(n+1)$. Improvements to this upper bound have all implicitly used the following device. Suppose that $A$ is a set of size $m$ with no sum-free subset of size larger than $f(m)$ and that $B$ is a set of size $n$ with no sum-free subset of size larger than $f(n)$. Then if $M\in\N$ is sufficiently large $A\cup M B$ is a set of size $m+n$ with no sum-free subset of size larger than $f(m) + f(n)$, so $f(m+n)\leq f(m)+f(n)$. This condition is well known to imply that $f(n)/n$ converges to $\inf f(n)/n$, so to show $f(n)\leq cn + o(n)$ it suffices to find a single set $A$ with no sum-free subset of size larger than $c|A|$. Let $\sigma = \lim f(n)/n = \inf f(n)/n$.

In~\cite{erdos1} Erd\H{o}s mentioned that Hinton proved $\sigma\leq\tfrac{7}{15}\approx0.467$. He also pointed out, attributing the construction to Klarner, that the set $A=\{2,3,4,5,6,8,10\}$ shows $\sigma\leq\tfrac{3}{7}\approx 0.429$. Using a set of size $29$ Alon and Kleitman~\cite{alonkleitman} showed $\sigma\leq\frac{12}{29} \approx 0.414$. Malouf \cite{malouf} in her thesis (as well as Furedi, according to Guy~\cite{guy}) used $A = \{1,2,3,4,5,6,8,9,10,18\}$ to show $\sigma\leq\tfrac{2}{5} = 0.4$. Lewko~\cite{lewko} used a set of size $28$ to show $\frac{11}{28} \approx 0.393$. Incidentally, in a 1992 letter~\cite{erdosklarner} to Klarner, Erd\H{o}s claims this same bound of $\tfrac{11}{28}$, but he includes no proof.

Recently, Alon~\cite{alon} showed that for each $n$ there exists $m$ such that $f(m)/m < f(n)/n$. Thus there is no $n$ such that $f(n)/n = \sigma$. Applying this to Lewko's bound, Alon showed for instance that $\sigma \leq \frac{11}{28} - \eps$ for some $\eps \approx 10^{-50000}$.

The question of whether $\sigma=\tfrac{1}{3}$ has been mentioned several times~\cite{alonspencer,crootlev,erdos1,erdos2,guy,kolountzakis}. Our purpose in this paper is to answer this question affirmatively.

\begin{theorem}\label{mainthm}
There is a set of $n$ positive integers with no sum-free subset of size greater than $\frac{1}{3}n + o(n)$.
\end{theorem}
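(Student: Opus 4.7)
The plan is to combine a structural (``99\%'') version of Erd\H{o}s's lower bound with a construction matching it. For $\theta \in \R/\Z$ and finite $A \subset \Z$, write $A_\theta = \{x \in A : \tfrac{1}{3} < \{\theta x\} < \tfrac{2}{3}\}$, the set featuring in Erd\H{o}s's argument (with $\E_\theta |A_\theta| \approx \tfrac{1}{3}|A|$). I would first prove a \emph{stability statement}: there exists $\delta(\eps) \to 0$ as $\eps \to 0$ such that any sum-free $S \subseteq A$ with $|S| \geq (\tfrac{1}{3} + \eps)|A|$ satisfies $|S \setminus A_\theta| \leq \delta(\eps)|A|$ for some $\theta \in \R/\Z$. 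I would then \emph{construct}, for each $\eta > 0$ and large $n$, a set $A$ of $n$ integers with $|A_\theta| \leq (\tfrac{1}{3} + \eta)n$ for \emph{every} $\theta$. Combining these with $\eta < \eps - \delta(\eps)$: any sum-free $S \subseteq A$ of size $\geq (\tfrac{1}{3}+\eps)n$ would force $|A_\theta| \geq |S \cap A_\theta| \geq |S| - \delta(\eps)n > (\tfrac{1}{3}+\eta)n$, a contradiction.

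For the stability step, I would embed $A$ into $\Z/M\Z$ with $M$ much larger than $\max A$, so that sumsets inside $A$ agree with those in $\Z/M\Z$. The sum-freeness of $S$ combined with $|S| \geq (\tfrac{1}{3}+\eps)|A|$ forces some non-trivial frequency $\xi$ with $|\widehat{1_S}(\xi)| \gtrsim \eps |S|/M$; setting $\theta = \xi/M$, a further Fourier comparison of $1_S$ with $1_{A_\theta}$ should bound $|S \setminus A_\theta|$. A Freiman--Ruzsa-style iteration may be needed to handle several competing dominant frequencies. For the construction step, a probabilistic approach should suffice: include each $x \in \{1,\ldots,N\}$ in $A$ independently with probability $\rho = n/N$. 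Each $|A_\theta|$ is then a sum of independent Bernoullis with mean close to $n/3$, which concentrates by Chernoff; a union bound over an $O(N \max A)$-size net in $\theta$ handles the continuum, using that $\theta \mapsto A_\theta$ changes only when some $\theta x$ crosses $\tfrac{1}{3}$ or $\tfrac{2}{3}$. A deterministic construction---lifting a ``uniform'' subset of $\Z/N\Z$ for highly composite $N$---may give cleaner quantitative bounds.

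The main obstacle is the stability step. Sum-freeness is a three-term (triangle-count) condition, so a single large Fourier coefficient is typically insufficient to pin down the structure of $S$; an iterative refinement that peels off structured pieces, or an application of an arithmetic regularity lemma in the style of Green, is probably required. The quantitative form of $\delta(\eps)$ matters crucially: standard Fourier-stability arguments often lose powers of $\eps$ and might not beat the concentration estimates available in the construction step, so sharpness in the stability argument---or perhaps a more subtle, deterministic construction tailored to the output of the stability theorem---will be the key technical point.
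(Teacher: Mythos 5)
Your plan --- a stability theorem for Erd\H{o}s's argument together with a construction matching it --- is a natural reduction, but the construction step is broken in a fundamental way. If $A$ is a random subset of $\{1,\ldots,N\}$ of density $\rho = n/N$, then for $\theta = \tfrac12$ the set $A_\theta$ is exactly the set of odd elements of $A$, which has expected size roughly $\tfrac12 n$, not $\tfrac13 n$. More generally, the deterministic count $|\{x \leq N : \{\theta x\} \in (\tfrac13,\tfrac23)\}|$ is not close to $N/3$ when $\theta$ is rational with a small denominator; it is about $N/2$ for $\theta = \tfrac12$ and about $2N/5$ for $\theta = \tfrac15$. So the claim that each $|A_\theta|$ is a sum of independent Bernoullis with mean close to $n/3$ is false for precisely the values of $\theta$ that matter most, and no union bound or concentration estimate can rescue it. This is the ``obstruction from $\Z/2\Z$'' (and its cousins from $\Z/5\Z$, $\R$, etc.) discussed in Section 2: any valid $A$ must be engineered so that at most about $\tfrac13 n$ of its elements are odd, at most about $\tfrac13 n$ lie in $\{2,3\} \bmod 5$, at most about $\tfrac13 n$ lie in any dyadic window $[x,2x)$, and so on, all simultaneously. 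Arranging this is the entire point of the weight function $w:\Z/Q\Z\times[0,1]\to(0,\infty)$ constructed in Section 5; the paper obtains $A$ by randomly rounding a non-uniform density proportional to $w$, not by uniform sampling.

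Separately, the stability step you propose is not what the paper proves and is likely much harder. The paper does not show that every large sum-free $S \subseteq A$ is $o(|A|)$-close to a single $A_\theta$. Its structural engine is Theorem \ref{discthm} (and Corollary \ref{cor4.2}): a set with $|A-A| \leq 4|A| - \eps N$ has density $\geq \tfrac12 + c\eps$ on a long progression, whence almost sum-free subsets of $\Z/q\Z\times[0,1]$ of density slightly above $\tfrac13$ must put very little mass near zero. That ``avoid zero'' statement is deliberately weaker than your proposed inverse theorem, and it is used not to classify large sum-free subsets outright but to drive an iteration (Lemma \ref{weight-lem}) that successively refines $w$, with the arithmetic regularity lemma eventually transferring the conclusion back to $\{1,\ldots,N\}$. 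A one-shot Fourier argument pinning $S$ down to a single $A_\theta$ would be a considerably stronger classification than anything the paper establishes, and it is unclear whether it is true. You correctly identify the stability step as delicate, but the more immediate obstacle is that the construction half of your plan cannot produce the set whose existence your argument requires.
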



By the above argument it suffices to find, for each $\eps>0$, a single set $A$ with no sum-free subset of size larger than $\left(\tfrac{1}{3}+\eps\right)|A|$. In fact we find a set $A$ such that every subset $A'$ of size larger than $\left(\tfrac{1}{3}+\eps\right)|A|$ contains a solution to $x+y=z$ with $x\neq y$. This answers a further question asked in~\cite{erdos1}.

One of the ingredients of our argument is a rough structure theorem for sets $A$ satisfying conditions of the form $|A-A|<4|A|$, which may be of independent interest. Specifically, if $A$ is a set of integers with $|A-A|\leq(4-\eps)|A|$ then $A$ has density at least $\frac{1}{2} + c\eps$ on some arithmetic progression of length $\gg_{\eps} |A|$. 

\section{Overview of the proof}

\label{sec2}

The proof of Theorem \ref{mainthm} breaks down naturally into several parts.  We outline these informally here, and give an indication of how they combine.

Note that there are certain local obstructions to a set $A$ having the desired property, that is to say having no sum-free subset $A'$ with $|A'| \geq (\frac{1}{3} + \eps)|A|$. For instance, not more than $\left(\tfrac{1}{3} + \eps\right) |A|$ of the elements of $A$ can be odd, as these form a sum-free set in $A$. We think of this as an obstruction coming from $\Z/2\Z$. Not more than $\left(\tfrac{1}{3} + \eps\right)|A|$ of the elements of $A$ can be congruent to $2$ or $3 \md{5}$, an obstruction coming from $\Z/5\Z$. Similarly, not more than $\left(\tfrac{1}{3} + \eps\right) |A|$ elements can be contained in an interval $[x, 2x)$, an obstruction coming from $\R$.

In fact we shall see in Section \ref{sec3} that, in some sense, these restrictions coming from $\Z/Q\Z$ for various $Q$ and from $\R$ are the \emph{only} obstructions to $A$ having the desired property.  

To deal with these restrictions modulo $Q$ and in $\R$ we consider a \emph{weight function} $w : \Z / Q \Z \times [0, 1] \rightarrow (0,\infty)$. Roughly speaking, we will define a set $A \subset \{1,\dots,N\}$ in such a way that a proportion $w(x,y)$ of the elements of $A$ lie near the value $yN$ and are congruent to $x \md{Q}$.  The ``local'' version of our problem is then roughly the following.

\begin{problem}[Local problem]\label{local_problem}
Find $w$ such that if $A \subset \Z / Q \Z \times [0, 1]$ is open and if $\int_A w(x,y) d\mu \geq \frac{1}{3} + \eps$ then $A$ contains a summing triple $x, y, x + y$. Here, $\mu$ denotes the uniform probability measure on $\Z/Q\Z \times [0,1]$.
\end{problem}

In Section \ref{sec3}, we show that if $w$ satisfies a slightly stronger version of Problem \ref{local_problem} (specifically Proposition \ref{nu-exist-3}), then a set $A$ may be constructed from $w$ as suggested above and Theorem \ref{mainthm} holds for this $A$. The actual construction of $A$, which involves a random selection argument, occurs at \eqref{a-prop-1}. A crucial tool in showing that $A$ has the required property (and elsewhere in the paper) is the \emph{arithmetic regularity lemma} due to the second-named author and Tao \cite{green-tao-arithregularity}. The statement of this is recalled in Lemma \ref{ar}.

The remainder of the paper is concerned with constructing the weight function $w$, that is to say with solving the local problem.  This construction is rather involved. At its heart is an iterative argument (Lemma \ref{weight-lem}) allowing us to take a near-solution $w$, such that if $\int_A w(x,y) d\mu \geq \alpha$ then $A$ contains contains many summing triples, and improve it to a nearer-solution $w'$, with corresponding parameter $\alpha'<\alpha$. The sequence $\alpha, \alpha', \alpha'',\dots$ obtained in this way converges rapidly to $\frac{1}{3}+\eps$. 


The main driver for this iterative argument is a structural result concerning sum-free (or almost sum-free) subsets of $\Z/Q\Z \times [0,1]$ with (uniform) measure just a little more than $\frac{1}{3}$.  The crucial result here is Corollary \ref{cor4.2}, which states that such sets ``avoid zero'', i.e.,~have very little mass on $H \times I$, where $H \leq \Z/Q\Z$ is a subgroup of small index and $I \subset [0,1]$ is a (not too small) open interval containing $0$. Thus if $w$ is chosen to have a lot of its mass concentrated on $H \times I$, then $\int_A w(x, y) d\mu$ is small whenever $\mu(A)$ is a bit more than $\frac{1}{3}$.  The iteration then works in some sense by applying the same arguments to $A \cap (H \times I)$.  In particular, the weight $w$ we construct blows up near zero.

The proof of Corollary \ref{cor4.2} rests on the rather lengthy arguments of Section \ref{sec4}, which concern the structure of open sets $A \subset \Z/Q\Z \times [0,1]$ with $\mu(A - A) \leq 4 \mu(A) - \eps$, where $\mu$ denotes the uniform measure (and generalisations of this statement). The key result here is Corollary \ref{thm:stronger-thmconj}. This in turn is deduced from Theorem \ref{discthm}, which concerns sets of \emph{integers} $A \subset \{1, \dots, N \}$ satisfying the same condition, that $|A - A| \leq 4 |A| - \eps N$. The conclusion is that they have density at least $\frac{1}{2} + c\eps$ on a progression of length $\gg_{\eps} N$, a result which may be of independent interest. (This theme is elaborated upon briefly in Section \ref{sec6}, which is independent of the rest of the paper.) The proof of Theorem \ref{discthm} uses the arithmetic regularity lemma again, as well as an application of the Brunn-Minkowski inequality for open subsets of $\R^2$.

On account of our double application of the arithmetic regularity lemma, the $o(n)$ term in Theorem \ref{mainthm} is more or less ineffective. The authors believe that main obstacle to a more effective $o(n)$ here is the use of the arithmetic regularity lemma in Section \ref{sec4}, which for all we know could be replaced by more elementary arguments.

\emph{Notation.} We will introduce various pieces of notation as we go along. Throughout the paper we will also use the following at least somewhat standard notations.

The expression $O_{A_1,\dots, A_k}(1)$ denotes a constant which may depend on $A_1,\dots, A_k$, and $O_{A_1,\dots, A_k}(Y) = O_{A_1,\dots, A_k}(1)Y$. If we write $X \ll_{A_1,\dots A_k} Y$ then we mean that $X \leq O_{A_1,\dots, A_k}(Y)$. The expression $o_{A_1,A_2,\dots, A_k; N \rightarrow \infty}(1)$ denotes an expression which tends to zero as $N \rightarrow \infty$, the rate at which this happens being possibly dependent on the parameters $A_1,\dots, A_k$. On account of its relative ugliness we will use this notation sparingly.

If $f : \{1,\dots,N\} \to \C$ is a function then we write \[\|f\|_{\ell^p(N)} = \left( \frac{1}{N} \sum_{n \leq N} |f(n)|^p \right)^{1/p}.\] We will use this only when $p = 1$ or $2$. The normalisation, which is perhaps nonstandard, ensures that $\|f\|_{\ell^{1}(N)} \leq \|f\|_{\ell^2(N)} \leq \|f\|_{\infty}$.

\section{The main argument}
\label{sec3}

In this section we prove Theorem \ref{mainthm} assuming the existence of a weight function $w : \Z/Q\Z \times [0,1] \rightarrow (0,\infty)$, the role of which was briefly outlined in the preceding section. Proposition \ref{nu-exist-3} below, whose proof will occupy Sections \ref{sec4} and \ref{sec5}, specifies the properties we shall require of $w$. Before stating this proposition we introduce some pieces of nomenclature. 

We will view both $\Z/q\Z \times [0,1]$ and $\Z/q\Z \times [0,1] \times (\R/\Z)^d$, for various integers $q$ and $d$, as metric spaces. On each of these spaces $X$ we place an ``obvious'' metric, namely a suitable product metric of the discrete metric on $\Z/q\Z$ and the Euclidean metrics on $[0,1]$ and on $(\R/\Z)^d$. This allows us to talk about Lipschitz functions on $X$: if $F : X \rightarrow \C$ then we define $\|F\|_{\Lip}$ to be the infimum of all constants $K$ such that $|F(x) - F(x')| \leq K\, d(x,x')$ for all $x, x' \in X$.

We will also put natural measures on these spaces $X$, which we will always denote by $\mu$ (more precise notation such as $\mu_{\Z/q\Z \times [0,1]}$ would be rather ugly and unnecessary). The measure $\mu$ will always be the product of the uniform probability measures on each factor, namely the uniform measure on $\Z/q\Z$ (which assigns mass $1/q$ to each point), and normalised Lebesgue measure on $[0,1]$ and the torus $(\R/\Z)^d$. 

Finally, if $X$ is one of the sets above and if $\Psi : X \to \C$ is a function then we define 
\begin{equation}\label{t-def}T(\Psi) = \int \Psi(x) \Psi(x') \Psi(x + x') d\mu(x) d\mu(x').\end{equation}
We also write $T(A) = T(1_A)$ if $A \subset X$. We use the same notation when $X = \{1,\dots, N\}$ with the uniform probability measure, so if $f : \{1,\dots, N\} \rightarrow \C$ is a function we write
\begin{equation}\label{t-N-def} T(f) = \frac{1}{N^2} \sum_{n , n' \leq N} f(n) f(n') f(n + n').\end{equation}

By a \emph{weight function} we simply mean a function $w:\Z/Q\Z\times[0,1]\to(0,\infty)$ such that $\int w\,d\mu = 1$. If $\T=(\R/\Z)^d$ and $Q\mid q$ then by $w\times 1_{\T} : \Z/q\Z\times[0,1]\times\T\to(0,\infty)$ we mean the function given by $(w\times1_{\T})(x,y,z) = w(x\md{Q},y)$.

\begin{proposition}\label{nu-exist-3} Let $\eps > 0$. Then there is an integer $Q$ and a Lipschitz weight function $w : \Z/Q\Z \times [0,1] \to (0,\infty)$ with the following property. If $\T = (\R/\Z)^d$ and $Q\mid q$, then for any continuous function $\Psi : \Z/q\Z \times [0,1] \times \T \to [0,1]$ such that $\int \Psi \cdot (w \times 1_{\T}) d\mu \geq \frac{1}{3} + \eps$ we have $T(\Psi) \gg_\eps 1$.
\end{proposition}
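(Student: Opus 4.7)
My plan follows the outline of Section \ref{sec2}: construct $w$ iteratively. I would define a sequence of \emph{admissible pairs} $(w_k, \alpha_k)$, where each $w_k : \Z/Q_k\Z \times [0,1] \to (0,\infty)$ is a Lipschitz weight function and each $\alpha_k$ a threshold for which the conclusion of Proposition \ref{nu-exist-3} holds with $\tfrac{1}{3}+\eps$ replaced by $\alpha_k$. Engineer the sequence so that $\alpha_k$ strictly decreases, and terminate at the first $k$ with $\alpha_k \leq \tfrac{1}{3}+\eps$, taking $w = w_k$ and $Q = Q_k$.

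\textbf{Base case.} Start with $w_0 \equiv 1$ and $Q_0 = 1$, choosing $\alpha_0$ close enough to $1$ that any continuous $\Psi$ with $\int \Psi \, d\mu \geq \alpha_0$ is close to the constant function $1$ on nearly all of the space. Then $T(\Psi)$ is close to the trivial value $T(\mathbf{1}) = \tfrac{1}{2}$ (governed by the $[0,1]$ coordinate, where the constraint $x + x' \in [0,1]$ has measure $\tfrac{1}{2}$ on $[0,1]^2$, and the $\Z/q\Z$ and $\T$ factors contribute $1$ each), so $T(\Psi) \gg 1$.

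\textbf{Inductive step.} The crux of the construction is a weight-improvement lemma (Lemma \ref{weight-lem}): given an admissible pair $(w, \alpha)$ with $\alpha > \tfrac{1}{3}+\eps$, I would produce an admissible pair $(w', \alpha')$ with a quantified improvement $\alpha - \alpha' \geq c_\eps > 0$ depending only on $\eps$. This means $O_\eps(1)$ iterations suffice to drop $\alpha_k$ below $\tfrac{1}{3}+\eps$. The new weight is of the form $w' = (1-\eta) w + \eta u$, where $\eta \asymp c_\eps$ is a small constant and $u$ is a positive Lipschitz bump concentrated near the origin $(0,0) \in \Z/Q'\Z \times [0,1]$ (with $Q' \geq Q$ a refined modulus). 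The mechanism is as follows: suppose $\Psi$ satisfies $\int \Psi w' \, d\mu \geq \alpha'$ but $T(\Psi)$ is negligible. Then Corollary \ref{cor4.2} forces $\Psi$ to ``avoid zero'', placing very little mass on $H \times I$ for some finite-index subgroup $H \leq \Z/q\Z$ and some short interval $I \ni 0$; hence $\int \Psi u \, d\mu$ is tiny, whence $(1-\eta) \int \Psi w \, d\mu \geq \alpha' - o(1)$. For $\alpha' > (1-\eta)\alpha$, this gives $\int \Psi w \, d\mu > \alpha$, contradicting the admissibility of $(w, \alpha)$. Since Corollary \ref{cor4.2} admits only a bounded family of candidate pairs $(H, I)$, the bump $u$ can be arranged to interpolate over all of them simultaneously.

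\textbf{Main obstacle.} The hardest part is Corollary \ref{cor4.2} (``approximately sum-free implies avoids zero''), which is established in Section \ref{sec4} using the inverse result Theorem \ref{discthm} for sets with $|A-A| \leq (4-\eps)|A|$, itself a consequence of the arithmetic regularity lemma together with the planar Brunn--Minkowski inequality. A secondary technical difficulty is keeping the Lipschitz constants of the $w_k$ bounded uniformly in $k$ (as a function of $\eps$ only), so that admissibility of $(w_k, \alpha_k)$ can be invoked uniformly in the ambient $q$ and $\T$ at each iteration.
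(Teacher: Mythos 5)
Your high-level strategy --- iterate a weight-improvement lemma, with Corollary \ref{cor4.2} driving the decrease in $\alpha$, and use a nesting of ``near-zero'' structures to handle all candidate subgroups $H$ at once --- is indeed the paper's strategy. But the specific form of your inductive step does not work, and the failure sits in exactly the case your sketch does not address: $\Psi$ of small support, where Corollary \ref{cor4.2} is silent (it requires $\mu(A) \geq \tfrac{1}{3}+\eps$).

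Take $w' = (1-\eta)w + \eta u$ with $u$ a generic bump supported on $H\times[0,\delta]$, and let $\Psi$ be a continuous approximation to $1_{H\times[\delta/2,\delta]}$. Then $T(\Psi)=0$ and $\mu(\{\Psi>0\})$ is as small as you like, yet $\int \Psi u\,d\mu \approx \tfrac{1}{2}$. So in the low-measure case the only available bound is $\int \Psi w'\,d\mu \leq (1-\eta)\alpha + \tfrac{1}{2}\eta$, an affine map with fixed point $\tfrac{1}{2}$: your iteration stalls near $\tfrac{1}{2}$ and never reaches $\tfrac{1}{3}+\eps$. Note also that retaining the old $w$ untouched in the $(1-\eta)w$ term squanders the low-measure information $\mu(\{\Psi>0\}) < \tfrac{1}{3}+\tfrac{1}{8}\eps$ entirely.

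The paper's Lemma \ref{weight-lem} differs in two interlocking ways, both essential. First, the ``bump'' is not generic but is a linear compression of $w$ itself: roughly $w'_t = \tfrac{3}{4}\,\tfrac{1}{\mu(\im\pi_t)}(w\circ\pi_t^{-1})\,1_{\im\pi_t} + \tfrac{1}{4}$, where $\pi_t(x,y) = (Mx, t\eps' y)$. Because $\pi_t$ is a homomorphism, $T(\pi_t^{-1}(A)) \ll T(A)$, so in the low-measure case the admissibility of $(w,\alpha)$ applies to the \emph{preimage} $\pi_t^{-1}(A)$ and bounds the compressed term by $\alpha$. This self-similar structure is what controls $\int \Psi u$ when $\mu$ is small --- an arbitrary bump gives you nothing there. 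Second, the remaining mass is uniform (not a copy of $w$), so in the low-measure case it contributes $\tfrac{1}{4}\mu(\{\Psi>0\}) \leq \tfrac{1}{4}(\tfrac{1}{3}+\tfrac{1}{8}\eps)$, giving the recursion $\alpha' = \tfrac{3}{4}\alpha + \tfrac{1}{4}(\tfrac{1}{3}+\tfrac{1}{8}\eps)$ with fixed point $\tfrac{1}{3}+\tfrac{1}{8}\eps$. You need both ingredients to get the fixed point below $\tfrac{1}{3}+\eps$; the form $(1-\eta)w + \eta u$ has neither.
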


Note that because $Q$ and $w$ depend only on $\eps$, there are constants $c_1(\eps)$ and $c_1'(\eps)$ such that $0<c_1(\eps)\leq w(x) \leq c'_1(\eps)$ for every $x\in\Z/Q\Z\times[0,1]$, and there is a constant $L(\eps)$ such that $\|w\|_{\Lip}\leq L(\eps)$. Choose $c_2(\eps)$ to be the implied constant in Proposition \ref{nu-exist-3}, so that the conclusion of the proposition is that $T(\Psi)\geq c_2(\eps)$.

The next lemma is quite standard. In it we encounter the notion of the Gowers $U^2$-norm $\|f\|_{U^2(N)}$, whose definition and relevant basic properties are recalled in Appendix \ref{appA}.

\begin{lemma}\label{random-sample}
Suppose that $p : \{1,\dots,N\} \rightarrow [0,1]$ is a function. Then there is a set $A \subset \{1,\ldots,N\}$ such that $\| 1_A - p \|_{U^2(N)} \ll N^{-1/4}$.
\end{lemma}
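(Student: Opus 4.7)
The plan is a straightforward probabilistic argument: sample $A \subset \{1,\dots,N\}$ randomly by including each $n$ independently with probability $p(n)$, and show that $\mathbb{E}\,\|1_A - p\|_{U^2(N)}^4 \ll 1/N$. By Markov's inequality (or by pigeonhole on the sample space) there is then some realization of $A$ with $\|1_A - p\|_{U^2(N)} \ll N^{-1/4}$, as desired.

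Write $g = 1_A - p$, so the random variables $\{g(n)\}_{n \leq N}$ are independent, mean zero, bounded by $1$ in absolute value, and satisfy $\mathbb{E}\,g(n)^2 = p(n)(1 - p(n)) \leq 1/4$. Unwinding the definition of the Gowers $U^2$-norm from Appendix \ref{appA}, the quantity $\|g\|_{U^2(N)}^4$ is a suitably normalized average of four-fold products of the form $g(x)\,g(x+h_1)\,g(x+h_2)\,g(x+h_1+h_2)$ (after extending $g$ by zero to an ambient group $\mathbb{Z}/\tilde N\mathbb{Z}$ with $\tilde N \asymp N$, as in the standard definition). Taking expectations commutes with this averaging.

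The key observation is that, by independence of the $g(n)$ and the fact that $\mathbb{E}\,g(n) = 0$, the expectation of such a four-fold product vanishes unless every $n$ appears with even multiplicity in the multiset $\{x, x+h_1, x+h_2, x+h_1+h_2\}$. Case analysis shows this forces a degenerate parallelogram: either all four vertices coincide (requiring $h_1 = h_2 = 0$), or exactly two pairs coincide, which, up to the harmless $2h_1 = 0$ exception in $\mathbb{Z}/\tilde N\mathbb{Z}$ that can be avoided by taking $\tilde N$ odd, forces $h_1 = 0$ or $h_2 = 0$. On each such degenerate triple the expectation is at most $\mathbb{E}\,g^4 \leq 1$ or $(\mathbb{E}\,g^2)^2 \leq 1/16$, and the number of degenerate triples $(x, h_1, h_2)$ is $O(N\tilde N)$ out of a total of $\tilde N^3$. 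Combining with the appropriate normalization constant yields $\mathbb{E}\,\|g\|_{U^2(N)}^4 = O(1/N)$.

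There is no real obstacle here; the only thing requiring care is the bookkeeping of the precise normalization of $\|\cdot\|_{U^2(N)}$ used in Appendix \ref{appA}, in particular the ratio of $N$ to $\tilde N$ and any factor of $\|1_{[N]}\|_{U^2(\mathbb{Z}/\tilde N\mathbb{Z})}$. These are constants independent of $N$ (to within a harmless factor) and do not affect the final $N^{-1/4}$ bound.
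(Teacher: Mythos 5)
Your proof is correct and is essentially identical to the paper's: both sample $A$ by including each $n$ independently with probability $p(n)$, bound $\E\|1_A-p\|_{U^2(N)}^4$ by observing that the independence and mean-zero property kill all non-degenerate additive quadruples, and count the $O(N^2)$ remaining degenerate terms to get $O(1/N)$. The only cosmetic difference is that you parametrize by $(x,h_1,h_2)$ and spell out the even-multiplicity case analysis, whereas the paper parametrizes by $n_1+n_2=n_3+n_4$ and just notes that the terms with all four $n_i$ distinct vanish.
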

\begin{proof}
Choose $A$ at random by including $n$ in $A$ with probability $p(n)$, these choices being independent for different $n$. We claim that $A$ has the required property on average. Write $X_n = 1_A(n) - p(n)$. Then the random variables $X_n$ are independent, bounded by $1$, and of mean zero. We have
\[ \| 1_A - p \|_{U^2(N)}^4 \ll \frac{1}{N^3} \sum_{n_1 + n_2 = n_3 + n_4} X_{n_1} X_{n_2} X_{n_3} X_{n_4}.\]
The expected value of any term on the right hand side with $n_1, n_2, n_3, n_4$ distinct is $0$. This accounts for all except $O(N^2)$ terms, and so $\E \| 1_A - p \|_{U^2(N)}^4 \ll 1/N$. The result follows immediately. 
\end{proof}

Fix $\eps>0$, and let $Q$ and $w$ be as in Proposition \ref{nu-exist-3}. By Lemma \ref{random-sample} there is a set $A$ such that 
\begin{equation}\label{a-prop-1} 1_A(n) = \frac{1}{\|w\|_\infty} w(n\md Q,n/N) + g_{\unf}(n),\end{equation} where $\| g_{\unf} \|_{U^2(N)} = o(1)$. Since $\int w = 1$ and $w$ has Lipschitz constant $O_{\eps}(1)$, it follows from Lemmas \ref{lem-a7} and \ref{gowers-progressions} that
\begin{equation}\label{a-prop-2} \frac{|A|}{N} = \frac{1}{\| w \|_{\infty}} + o_{\eps; N \rightarrow \infty}(1).\end{equation}
We shall show that this set $A$ satisfies Theorem \ref{mainthm}.

\begin{theorem}\label{mainthm-sec3}
Let $N>N_0(\eps)$ be sufficiently large, let $A$ be the set just constructed and suppose $A' \subset A$ has no solutions to $x + y = z$. Then $|A'| \leq (\frac{1}{3} + 2\eps)|A|$.
\end{theorem}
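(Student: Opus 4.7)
The plan is to argue by contradiction: if $|A'| > (\frac{1}{3} + 2\eps)|A|$, I will build a continuous test function $\Psi$ on $\Z/q\Z \times [0,1] \times (\R/\Z)^d$ representing the relative density of $A'$ inside $A$, verify the hypothesis $\int \Psi \cdot (w \times 1_\T)\, d\mu \geq \frac{1}{3} + \eps$ of Proposition~\ref{nu-exist-3}, and thereby obtain $T(\Psi) \geq c_2(\eps)$. This will then be promoted to $T(1_{A'}) \gg_\eps 1$, contradicting the trivial bound $T(1_{A'}) \leq |A'|/N^2 = O(1/N)$ that follows from sum-freeness.

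First, I would apply the arithmetic regularity lemma (Lemma~\ref{ar}) to $1_{A'}$ at the $U^2$ level, producing a decomposition
$$1_{A'}(n) = F(n \md q, n/N, \vec\alpha n) + f_{\sml}(n) + f_{\unf}(n),$$
where $Q \mid q$, $F : \Z/q\Z \times [0,1] \times (\R/\Z)^d \to [0,1]$ is $O_\eps(1)$-Lipschitz, and $\|f_{\sml}\|_{\ell^2(N)}$ and $\|f_{\unf}\|_{U^2(N)}$ can be taken as small as we please; we may further assume that $(n \md q, n/N, \vec\alpha n)$ is effectively equidistributed in the product space. Define $F_A(x,y,z) := w(x \md Q, y)/\|w\|_\infty$, which is Lipschitz and bounded below by $c_1(\eps)/\|w\|_\infty > 0$, and set $\Psi := \min(F/F_A, 1)$, a continuous $[0,1]$-valued function.

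Combining $1_{A'} \leq 1_A$ with the decompositions of $1_{A'}$ above and of $1_A$ from \eqref{a-prop-1} yields the pointwise bound $(F - F_A)(n \md q, n/N, \vec\alpha n) \leq |g_{\unf}(n)| + |f_{\sml}(n)| + |f_{\unf}(n)|$ on $\{1, \ldots, N\}$, whence averaging and equidistribution applied to the Lipschitz function $(F - F_A)_+$ yield $\int (F - F_A)_+\, d\mu = o(1)$. Since $\Psi F_A = \min(F, F_A) = F - (F - F_A)_+$, equidistribution combined with \eqref{a-prop-2} gives
$$\int \Psi \cdot (w \times 1_\T)\, d\mu = \|w\|_\infty \int \Psi F_A\, d\mu = \|w\|_\infty \frac{|A'|}{N} + o(1) = (1+o(1)) \frac{|A'|}{|A|} + o(1),$$
which exceeds $\frac{1}{3} + \eps$ for $N$ sufficiently large. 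Proposition~\ref{nu-exist-3} therefore gives $T(\Psi) \geq c_2(\eps)$. Since $\Psi F_A \leq F$ pointwise and $F_A \geq c_1(\eps)/\|w\|_\infty$, monotonicity of $T$ on nonnegative functions yields $T(F) \geq T(\Psi F_A) \geq (c_1(\eps)/\|w\|_\infty)^3 T(\Psi) \gg_\eps 1$, which equidistribution then transfers to the corresponding discrete trilinear average of $F(\cdot)$ on $\{1, \ldots, N\}$.

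Finally, expand $T(1_{A'})$ trilinearly about $F(\cdot) + f_{\sml} + f_{\unf}$: by the generalised von Neumann inequality (Appendix~\ref{appA}), every summand containing $f_{\unf}$ is bounded by $O(\|f_{\unf}\|_{U^2(N)})$, and the remaining summands involving $f_{\sml}$ are bounded by $O(\|f_{\sml}\|_{\ell^2(N)})$ via Cauchy--Schwarz; choosing the regularity parameters small enough gives $T(1_{A'}) \gg_\eps 1$, contradicting the $O(1/N)$ bound. The main obstacle is the quantitative bookkeeping: arranging $Q \mid q$ and joint equidistribution of $\vec\alpha$ with the modular and archimedean coordinates via the regularity lemma, and ensuring that all $o(1)$ terms are eventually smaller than the fixed $\eps$-dependent constants $c_1(\eps), c_2(\eps), L(\eps)$ and $\|w\|_\infty$. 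A minor additional subtlety is matching the boundary convention on the $[0,1]$-factor --- sums exceeding $N$, resp.\ $1$, contributing zero --- between the discrete and continuous trilinear forms.
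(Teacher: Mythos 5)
Your overall strategy matches the paper's almost exactly: you apply the arithmetic regularity lemma to $1_{A'}$, compare the structured part $F$ against the weight $F_A = \|w\|_\infty^{-1}(w\times 1_\T)$, form $\Psi=\min(F/F_A,1)$ so that $\Psi F_A=\min(F,F_A)$, feed $\Psi$ into Proposition~\ref{nu-exist-3}, and transfer back to $T(1_{A'})$ via counting and generalised von~Neumann. This is the paper's proof in essence, so the question is whether each step goes through.

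There is one genuine gap. You deduce from $1_{A'}\leq 1_A$ the pointwise inequality
\[
(F-F_A)(n\,\mathrm{mod}\,q,\,n/N,\,\theta n)\;\leq\; |g_{\unf}(n)|+|f_{\sml}(n)|+|f_{\unf}(n)|,
\]
and then assert that ``averaging and equidistribution'' gives $\int (F-F_A)_+\,d\mu=o(1)$. But taking absolute values and averaging yields a bound of $\E_n|g_{\unf}(n)|+\E_n|f_{\sml}(n)|+\E_n|f_{\unf}(n)|$, and the quantities $\E_n|g_{\unf}|$ and $\E_n|f_{\unf}|$ are \emph{not} controlled by the respective $U^2$-norms. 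A function can have $U^2$-norm $o(1)$ while its $\ell^1$-norm stays bounded away from zero (a random $\pm 1$ sign pattern is the standard example); indeed $g_{\unf}=1_A-p$ from the random construction at~\eqref{a-prop-1} has $\E_n|g_{\unf}(n)|\approx 2\E_n p(n)(1-p(n))$, which is a constant, not $o(1)$. So the naive averaging does not establish $\int (F-F_A)_+\,d\mu=o(1)$, and without that your subsequent computation of $\int\Psi\cdot(w\times 1_\T)\,d\mu$ is unjustified.

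What is missing is the observation that the error $h:=(F-F_A)_+\circ\pi$ is itself a \emph{structured} function (Lipschitz composed with the equidistributing orbit), and that structured functions are nearly orthogonal to $U^2$-small functions. The paper's Lemma~\ref{dom} exploits exactly this: one computes $\|h\|_{\ell^2(N)}^2 = \E_n\, h(n)\bigl(f_{\tor}(n)-F_A(\pi(n))\bigr)$, substitutes both decompositions of $1_{A'}$ and $1_A$, and bounds the three resulting terms separately — the $1_A-1_{A'}$ term has the right sign, the $f_{\sml}$ term is handled by Cauchy--Schwarz, and the term involving $g_{\unf}-f_{\unf}$ is small by the orthogonality of structured and uniform parts (Lemma~\ref{gowers-orthog-struct}), \emph{not} by a crude $\ell^1$ estimate. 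Your proposal needs this Cauchy--Schwarz/orthogonality argument, or something equivalent, in place of the absolute-value averaging. Once that is inserted, the remainder of your proof (verifying the hypothesis of Proposition~\ref{nu-exist-3}, transferring $T(\Psi)\gg_\eps 1$ down to $T(1_{A'})\gg_\eps 1$, and concluding) matches the paper.
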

\begin{proof}
Let $A'\subset A$ be a subset of $A$ such that $|A'| \geq (\frac{1}{3} + 2\eps)|A|$. We apply the \emph{arithmetic regularity lemma} \cite{green-tao-arithregularity} to $1_{A'}$. The statement of this lemma is recalled in Lemma \ref{ar}. Let \begin{equation}\label{delta-choice} \delta = \min\left(\frac{c_2(\eps)c_1(\eps)^3}{20c'_1(\eps)^3},\frac{\eps}{4 c_1'(\eps)}, \frac{1}{100}\right)\end{equation} and let $\mathcal{F} : \N \rightarrow \R_+$ be a growth function, depending on $\eps$, to be specified later. Applying the regularity lemma with parameter $\tfrac{1}{8}\delta^4$ and growth function $\mathcal{F}$ we obtain an integer $M\ll_{\eps,\mathcal{F}} 1$ and a decomposition
\begin{equation}\label{reg-decomp} 1_{A'} = f_{\tor} + f_{\sml} + f_{\unf},\end{equation}
where $\|f_{\sml} \|_{\ell^2(N)} \leq \delta^4 / 16$, $\| f_{\unf} \|_{U^2(N)} \leq 1/\mathcal{F}(M)$ and
\begin{equation}\label{f-tor} f_{\tor}(n) = F(n \md{q}, n/N, \theta n)\end{equation}
for some $M$-Lipschitz $F:\Z/q\Z\times[0,1]\times(\R/\Z)^d\to[0,1]$ and some $(\mathcal{F}(M),N)$-irrational $\theta \in (\R/\Z)^d$, where $q,d \leq M$. For the rest of this section, write $\T = (\R/\Z)^d$.

We may regard the $\bmod\,q$ dependence of $f_{\tor}$ as $\bmod\,qQ$ dependence instead without affecting the Lipschitz constant of $F$. Relabelling, we may assume that $Q\mid q$ and $q\ll_\eps M$.

The property that $A'$ is a subset of $A$ manifests as an approximate upper bound for $F$ in terms of the weight $w$. As the next lemma shows, by absorbing the error into $f_{\sml}$ we can assume that $F \leq \|w\|_\infty^{-1} (w \times 1_{\T})$ pointwise.

\begin{lemma}\label{dom}
Suppose $\mathcal{F}$ grows sufficiently rapidly depending on $\eps$ and $N\geq N_0(\eps,\mathcal{F})$ is sufficiently large. Then we can modify the decomposition \eqref{reg-decomp} to $1_{A'} = f'_{\tor} + f'_{\sml} + f_{\unf}$ where $\| f'_{\sml} \|_{\ell^2(N)} \leq \delta^2$ and $f'_{\tor} = F'(n \md{q}, n/N, \theta n)$ for some $O_\eps(M)$-Lipschitz function $F' : \Z/q\Z \times [0,1] \times (\R/\Z)^d \to [0,1]$ such that $F' \leq \|w\|_\infty^{-1} (w \times 1_\T)$ pointwise.
\end{lemma}
\begin{proof}
Let $F' = \min\left(F,\|w\|_\infty^{-1} (w\times 1_\T)\right)$, let $f'_{\tor}(n) = F'(n\md q, n/N,\theta n)$, and let $h = f_{\tor} - f'_{\tor}$. It suffices to prove $\|h\|_{\ell^2(N)}\leq\tfrac{1}{2}\delta^2$. 

Now substituting in the definition of $h$ we have
\begin{align*} \|h\|_{\ell^2(N)}^2
 &  = \E_{n \leq N} h(n)^2 \\ & = \E_{n\leq N} h(n)\left(f_{\tor}(n) - \|w\|_\infty^{-1}(w\times 1_\T)(n\md Q,n/N,\theta n)\right) .\end{align*}
 Recalling that $1_{A'} = f_{\tor} + f_{\sml} + f_{\unf}$ and that
 \[ 1_A(n) = \Vert w \Vert_{\infty}^{-1}(w \times 1_{\T})(n \md{Q}, n/N, \theta n) + g_{\unf}(n),\]we may rewrite this as
 \[ \Vert h \Vert_{\ell^2(N)}^2 = \E_{n \leq N} h(n) \left(-(1_{A}(n) - 1_{A'}(n)) - f_{\sml}(n) + (-f_{\unf}(n) + g_{\unf}(n))\right).\]
 To estimate this, we split into three terms as suggested by the bracketing. The first term is $\leq 0$ since $h \geq 0$ and $1_{A'} \leq 1_A$ pointwise. The second term may be estimated by the Cauchy-Schwarz inequality, remembering that $h \leq 1$ pointwise:
 \[ \E_{n \leq N} h(n) f_{\sml}(n) \leq \Vert f_{\sml} \Vert_{\ell^2(N)} \Vert h \Vert_{\ell^2(N)} \leq \Vert f_{\sml} \Vert_{\ell^2(N)} \leq \tfrac{1}{8}\delta^4.\]
 Finally, the third term is extremely tiny if $\mathcal{F}$ grows quickly enough, by Lemma \ref{gowers-orthog-struct}. Putting all this together gives $\Vert h \Vert_{\ell^2(N)}^2 \leq \frac{1}{4}\delta^4$, and the lemma follows. \end{proof}

Relabelling $f'_{\tor}$ as $f_{\tor}$, $f'_{\sml}$ as $f_{\sml}$ and $F'$ as $F$, we may thus assume that \eqref{reg-decomp} and \eqref{f-tor} hold with $\|f_{\sml}\|_{\ell^2(N)}\leq\delta^2$, $\|f_{\unf}\|_{U^2(N)}\leq1/\mathcal{F}(M)$ and some $O_\eps(M)$-Lipschitz function $F:\Z/q\Z\times[0,1]\times(\R/\Z)^d\to[0,1]$ such that $F\leq\|w\|_\infty^{-1} (w\times1_{\T})$ pointwise, so in other words
\[ F = \|w\|_\infty^{-1} \Psi\cdot(w\times 1_{\T}) \]
for some continuous $\Psi:\Z/q\Z\times[0,1]\times\T\to[0,1]$. By combining this decomposition with Proposition \ref{nu-exist-3} and the counting lemmata in the appendix we can finish the proof of Theorem \ref{mainthm-sec3}.

Using \eqref{reg-decomp}, Cauchy-Schwarz and Lemma \ref{gowers-progressions} we have
\begin{align*}
	\frac{|A'|}{N}
		&= \E_{n \leq N} F(n \md{q}, n/N, \theta n) + \E_{n \leq N} f_{\sml}(n) + \E_{n \leq N} f_{\unf}(n) \\
		&\leq \E_{n \leq N} F(n \md{q}, n/N, \theta n)  + 2\delta.
\end{align*}
Thus by the Lipschitz property of $F$, the irrationality of $\theta$ and Lemma \ref{distribution-integral} we have
\[
	\frac{|A'|}{N} \leq \int F\, d\mu + 3\delta = \frac{1}{\|w\|_\infty} \int\Psi\cdot(w\times 1_{\T})\,d\mu + 3\delta.
\]
If $N>N_0(\eps)$ is large enough then \eqref{a-prop-2} implies that
\[ \frac{|A|}{N} \geq \frac{1}{\|w\|_{\infty}} - \delta.\] Assuming that $|A'| \geq (\frac{1}{3} + 2\eps) |A|$, and recalling that $\delta$ was chosen so that $\delta \leq \eps/4 c_1'(\eps)$, it follows from these two observations that
\[\int \Psi \cdot (w \times 1_{\T}) \, d\mu\geq \tfrac{1}{3} + \eps.\]
Proposition \ref{nu-exist-3} now implies $T(\Psi) \geq c_2(\eps)$, so by the pointwise bounds $c_1(\eps)\leq w \leq c_1'(\eps)$ we have
\[ T(F) = T\left(\|w\|_\infty^{-1}\Psi\cdot(w \times 1_{\T})\right)\geq \frac{c_2(\eps) c_1(\eps)^3} {c'_1(\eps)^3}.\] Write $c_3(\eps)$ for this latter quantity. By Lemma \ref{counting-lemma} it follows that (if $\mathcal{F}$ grows sufficiently rapidly and $N$ is big enough),
\[T(f_{\tor}) \geq T(F) - \tfrac{1}{2}c_3(\eps) \geq \tfrac{1}{2}c_3(\eps).\]
Finally, from Lemma \ref{lem-a9} together with the bounds $\| f_{\sml} \|_{\ell^2(N)} \leq \delta^2$, $\| f_{\unf} \|_{U^2(N)} \leq 1/\mathcal{F}(M)$ and the choice of $\delta$, we conclude that
\[ T(A') = T(f_{\tor} + f_{\sml} + f_{\unf}) \geq \tfrac{1}{4} c_3(\eps).\]
In particular $A'$ has $\gg_{\eps} N^2$ solutions to $x + y = z$ with $x\neq y$. This concludes the proof of Theorem \ref{mainthm-sec3} and hence of Theorem \ref{mainthm} (modulo the results of the next two sections and the appendix).
\end{proof}

\section{Sets of doubling less than 4}\label{sec4}

\def\Sym{\operatorname{D}}
\def\struct{\operatorname{struct}}

In this section we study sets $A$ satisfying $|A - A| \leq (4-\eps)|A|$ or various related but slightly weaker conditions. Our particular aim is to prove Corollary \ref{thm:stronger-thmconj} below, which will be crucial in the construction of the weight function $w$ in Proposition \ref{nu-exist-3}. However, some special cases and corollaries of our main result may be of independent interest and we highlight these in Section \ref{sec6}.

We remind the reader of our convention, mentioned at the beginning of Section \ref{sec3}, that the ``natural'' uniform measure on a space $X$, whether it be $\Z/q\Z\times[0,1]$, $\T$, $\{1,\dots,N\}$, etc., is denoted $\mu$. When we want to refer to the size (i.e., counting measure) of a set $A$, particularly a subset $A \subset \{1,\ldots,N\}$, we will use the notation $|A|$.

If $X$ is a space endowed with a measure $\mu$ (one of the above) then, as usual, we define the \emph{convolution} of two sufficiently nice functions $f_1, f_2 : X \to \C$ by $f_1 \ast f_2(x) = \int f_1(y) f_2(x - y) d\mu(y)$. In the case $X=\{1,\dots,N\}$ we allow $f_1$, $f_2$ and $f_1\ast f_2$ to be defined on $\Z\backslash\{1,\dots,N\}$ as well, but we continue to use the measure $\mu$ which gives each point a mass $1/N$. If $A$ is a set and $t$ is a real number then we define $\Sym_t(A) = \{ x : 1_A \ast 1_{-A}(x) \geq t\}$, the set of ``$t$-popular differences'' of $A$. Note that $\Sym_t(A) \subset A - A$ if $t > 0$.

The main result of this section is the following.

\begin{theorem}\label{discthm} For every $\eps > 0$ there is some $\delta\gg_\eps 1$ such that the following holds. If $A \subset \{1,\dots, N\}$ is a set with $|\Sym_{\delta}(A)| \leq 4|A| - \eps N$ then there is an arithmetic progression $P \subset \{1,\dots, N\}$ of length $|P|\gg_\eps N$ such that $|A \cap P| \geq (\frac{1}{2} + \tfrac{1}{5} \eps)|P|$.
\end{theorem}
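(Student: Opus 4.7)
The plan is to apply the arithmetic regularity lemma to $1_A$ and convert the hypothesis into a near-extremal Brunn--Minkowski problem on a compact space of bounded dimension.

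First, apply the arithmetic regularity lemma at parameter $\eta = \eta(\eps)$ and a rapidly growing function $\mathcal{F}$, yielding $1_A = f_{\tor} + f_{\sml} + f_{\unf}$ with $f_{\tor}(n) = F(n\md{q}, n/N, \theta n)$ for some $M$-Lipschitz $F : X \to [0,1]$ on $X = \Z/q\Z \times [0,1] \times (\R/\Z)^d$, where $q, d \leq M = O_\eps(1)$ and $\theta$ is highly irrational. Writing $\widetilde F(x) = F(-x)$, standard counting arguments based on the $U^2$-uniformity of $f_{\unf}$ and the smallness of $f_{\sml}$ give $1_A \ast 1_{-A}(n) \approx (F \ast_X \widetilde F)(n\md{q}, n/N, \theta n)$ in $\ell^\infty$, where $\ast_X$ denotes convolution on $X$. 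Combined with equidistribution of the orbit $n \mapsto (n\md{q}, n/N, \theta n)$, the hypothesis $|\Sym_\delta(A)| \leq 4|A| - \eps N$ translates, for an appropriate threshold $\delta' = \delta'(\eps)$, into the continuous inequality
\[ \mu_X\bigl(\{x \in X : (F \ast_X \widetilde F)(x) \geq \delta'\}\bigr) \leq 4\int_X F\, d\mu - \tfrac{3\eps}{4}. \]

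Next, threshold $F$ at some $\delta_0 = \delta_0(\eps)$ to obtain $B = \{F \geq \delta_0\} \subset X$ with $\mu(B) \approx \int F$ and, after some bookkeeping, $\mu(B - B) \leq 4\mu(B) - \eps''$ for some $\eps'' \gg_\eps 1$. Now apply Brunn--Minkowski in $\R^2$ to the continuous factor $[0,1] \times (\R/\Z)^d$: a genuinely $2$-dimensional footprint, after localising to small boxes to avoid $(\R/\Z)^d$-wraparound, would yield $\mu(B-B) \geq 4\mu(B)$, contradicting the doubling bound. A parallel Plünnecke-type argument on the discrete $\Z/q\Z$-factor forces $B$ to concentrate essentially on a single coset $\{a_0\}$. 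The residual $1$-dimensional continuous structure must be compatible with the Kronecker trajectory $n \mapsto (n/N, \theta n)$, so $B$ lies (up to $o_\eps(1)$ mass) near $\{a_0\} \times J$ where $J$ is a small neighbourhood of this trajectory in $[0,1]\times(\R/\Z)^d$. A quantitative stability form of the 1D Brunn--Minkowski inequality $\mu(B - B) \geq 2\mu(B)$ on $J$ then shows $B$ is essentially contained in $\{a_0\} \times I$ for some interval $I$ in the $y = n/N$ coordinate of length $\leq 2\mu(B)/(1 + \eps/4)$, giving density at least $\tfrac{1}{2} + \tfrac{\eps}{4}$ of $B$ there.

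Finally, pull back: $P = \{n \in \{1,\ldots,N\} : n \equiv a_0 \md{q},\; n/N \in I\}$ is an arithmetic progression of common difference dividing $q = O_\eps(1)$ and length $|P| \gg_\eps N$, and transferring the density estimate through the regularity decomposition gives $|A \cap P| \geq (\tfrac{1}{2} + \tfrac{\eps}{5})|P|$, provided all approximation errors (in the choice of $\eta$, $\mathcal{F}$, $\delta_0$, $\delta$) are made small enough compared to $\eps$. The main obstacle is the middle step: quantitatively ruling out both a $d \geq 1$-dimensional extension and multi-coset spreading in $\Z/q\Z$, and upgrading approximate equality in 1D Brunn--Minkowski to a clean interval-containment statement robust enough to survive the regularity errors.
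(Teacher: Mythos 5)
Your opening move (arithmetic regularity, replace $1_A$ by the structured torus-function, pass to a level set) matches the paper's in spirit, but the core of your argument breaks at exactly the place you flag as ``the main obstacle,'' and the gap is not a matter of quantitative bookkeeping --- the approach itself cannot go through.

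The fatal step is applying Brunn--Minkowski to $[0,1]\times(\R/\Z)^d$ and expecting that a ``genuinely $2$-dimensional footprint'' of $B$ forces $\mu(B-B)\geq 4\mu(B)$. On a torus no such inequality holds: the sharp bound for open $S\subset(\R/\Z)^d$ is Kemperman/Macbeath, $\mu(S-S)\geq\min(2\mu(S),1)$, and this is tight (take $S$ a sub-torus thickened slightly). Moreover, for a dense set $A$ the structured function $F$ produced by the regularity lemma will typically have its level set spread across all of $(\R/\Z)^d$, so that the torus-projection of $B-B$ is simply the whole torus; ``localising to small boxes to avoid wraparound'' discards exactly the sets for which the lemma was invoked in the first place. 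Consequently the proposed contradiction never materialises, and the follow-up steps (Pl\"unnecke on $\Z/q\Z$ to force a single coset, then a stability form of 1D Brunn--Minkowski on the ``Kronecker trajectory'') are built on this non-existent contradiction. There is also a secondary gap earlier: $\mu(B-B)\leq 4\mu(B)-\eps''$ does not follow from the hypothesis, because $B-B$ can be vastly larger than the level set $\{F\ast_X\widetilde F\geq\delta'\}$ (a pair of points in $B$ at difference $x$ does not make $F\ast_X\widetilde F(x)$ large), and $\mu(B)$ need not be $\approx\int F$ (only $\mu(B)\geq\int F-\delta_0$, with $\mu(B)$ possibly much larger).

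What the paper does instead is structurally different and worth absorbing. The torus factor is treated only with the one-dimensional Kemperman/Macbeath bound: convolutions $F_{a,i}\ast F_{a',i'}^\circ$ on $(\R/\Z)^d$ are shown (via a robust version of Macbeath, Lemma~\ref{lipschitz-sum}) to be bounded below on a set of measure $\approx\min(\int F_{a,i}+\int F_{a',i'},1)$; this produces, for each pair of cells $(a,i),(a',i')$, a supply of popular differences of $A$ whose count is comparable to $\frac{N}{qM}\min(\alpha(a,i)+\alpha(a',i'),1)$. The factor of $4$ then comes from applying Brunn--Minkowski not to the torus but to an auxiliary space $\Z/q\Z\times\R^2$: one $\R$-axis is the position index $i$, and --- this is the key trick you are missing --- the other $\R$-axis is used to record the density $\alpha(a,i)$ as a \emph{height}, so that the fibrewise two-set sum encodes the quantity $\alpha(a,i)+\alpha(a',i')$, and planar Brunn--Minkowski delivers $\sum_{x,y}\tilde\alpha(x,y)\geq 4\sum_{a,i}\alpha(a,i)$ (Lemma~\ref{doubling-4}). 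If every $\alpha(a,i)$ were below $\tfrac12+\tfrac15\eps$ this would force $|\Sym_\delta(A)|>4|A|-\eps N$, contradicting the hypothesis. No concentration of $B$ on a single coset or a single interval is ever asserted or needed; the conclusion is extracted by a counting/pigeonhole argument rather than by a stability analysis of Brunn--Minkowski.
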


The reader may find it helpful to think of the hypothesis $|\Sym_{\delta}(A)|\leq 4|A| - \eps N$ as a slight weakening of $|A-A|\leq 4|A|-\eps N$. To motivate this theorem, we first derive the corollary which will enable us in Section \ref{sec5} to construct a weight function satisfying Proposition \ref{nu-exist-3}.

\begin{corollary}\label{thm:stronger-thmconj} Let $\eps > 0$ and $q  \in \N$. Then there is $\delta\gg_\eps 1$ such that if $A\subset \Z/q\Z\times[0,1]$ is an open set with $\mu(\Sym_{\delta}(A)) \leq 4 \mu(A) - \eps$ then there is a subgroup $H\leq\Z/q\Z$ of index $[\Z/q\Z:H] \ll_{\eps} 1$, an element $x \in \Z/q\Z$, and a subinterval $I$ of $[0,1]$ of length $\mu(I)\gg_{\eps} 1$ such that $A$ has density at least $\tfrac{1}{2} + \tfrac{1}{7}\eps$ on $(x+H)\times I$.
\end{corollary}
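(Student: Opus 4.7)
The plan is to reduce Corollary \ref{thm:stronger-thmconj} to Theorem \ref{discthm} by a discretisation argument. Fix $\eps > 0$ and $q \in \N$; let $\delta_1 \gg_\eps 1$ be the threshold produced by Theorem \ref{discthm} applied with parameter $\eps$, and set the $\delta$ of the corollary to be $\delta = \delta_1 - \eta$ for some $\eta = \eta(\eps) > 0$ to be chosen. Given an open $A \subset \Z/q\Z \times [0,1]$ with $\mu(\Sym_\delta(A)) \leq 4\mu(A) - \eps$, take $N$ to be a large multiple of $q$ and set
\[
\tilde A = \{n \in \{1,\dots,N\} : (n \bmod q,\, n/N) \in A\}.
\]
Equidistribution of $\phi(n) := (n \bmod q,\, n/N)$ in $\Z/q\Z \times [0,1]$ as $N \to \infty$, combined with the Jordan measurability of the open set $A$, yields $|\tilde A|/N \to \mu(A)$.

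The main technical step is to transfer the doubling hypothesis to the discrete setting. Extend $1_A$ by zero off $\Z/q\Z \times [0,1]$; since $A$ is open and bounded, $1_A \ast 1_{-A}$ is continuous on $\Z/q\Z \times \R$. The trivial bound $N^{-1}(1_{\tilde A} \ast 1_{-\tilde A})(n) \leq 1 - |n|/N$ forces $\Sym_{\delta_1}(\tilde A) \subset \{|n| \leq (1-\delta_1)N\}$, a compact region on which a standard Riemann-sum argument gives uniform convergence $N^{-1}(1_{\tilde A} \ast 1_{-\tilde A})(n) = 1_A \ast 1_{-A}(\phi(n)) + o_{N\to\infty}(1)$. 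For $N$ large enough that this $o(1)$ term is smaller than $\eta$, it follows that $|\Sym_{\delta_1}(\tilde A)| \leq N \mu(\Sym_{\delta}(A)) + o(N) \leq 4|\tilde A| - \eps N + o(N)$. Applying Theorem \ref{discthm} (with parameter slightly smaller than $\eps$ to absorb the $o(N)$ error) then produces an arithmetic progression $P = \{a_0 + jd : 0 \leq j < L\} \subset \{1,\dots,N\}$ with $L \gg_\eps N$ and $|\tilde A \cap P| \geq (\tfrac{1}{2} + \tfrac{\eps}{5} - o(1))L$.

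It remains to pull $P$ back to $\Z/q\Z \times [0,1]$. Set $H = \langle d \bmod q \rangle \leq \Z/q\Z$, $x = a_0 \bmod q$ and $I = [a_0/N,\, (a_0 + (L-1)d)/N] \subset [0,1]$. From $(L-1)d \leq N-1$ and $L \gg_\eps N$ one gets $d \ll_\eps 1$, hence $[\Z/q\Z : H] = \gcd(d, q) \leq d \ll_\eps 1$; also $\mu(I) \geq (L-1)/N \gg_\eps 1$. For each coset element $h \in H$, the points of $\phi(P)$ in the fibre $\{x+h\} \times I$ form an AP of length $\approx L/|H|$ with spacing $|H|d/N = O_{\eps,q}(1/N)$, so $\phi(P)$ equidistributes in $(x+H)\times I$ as $N \to \infty$. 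Consequently the density of $A$ on $(x+H) \times I$ equals $|\tilde A \cap P|/L + o(1) \geq \tfrac{1}{2} + \tfrac{\eps}{5} - o(1) \geq \tfrac{1}{2} + \tfrac{\eps}{7}$ for $N$ sufficiently large, completing the proof. \emph{The main obstacle} is the careful bookkeeping in the hypothesis-transfer step: one must check the Riemann-sum convergence uniformly on the relevant compact range of differences, and ensure that the gap $\eta = \delta_1 - \delta$ simultaneously dominates the $o(1)$ Riemann-sum discrepancy and any atoms of the distribution function of $1_A \ast 1_{-A}$.
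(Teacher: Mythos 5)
Your overall approach---discretise to $\{1,\dots,N\}$ via $\pi(n) = (n \bmod q, n/N)$, transfer the doubling hypothesis, apply Theorem~\ref{discthm}, and pull the resulting progression back---is the same as the paper's. However, there is a genuine gap in the transfer step.

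You set $\tilde A = \pi^{-1}(A)$ directly and assert that ``Jordan measurability of the open set $A$'' yields $|\tilde A|/N \to \mu(A)$, and that a ``standard Riemann-sum argument'' gives uniform convergence of the convolutions. But open sets are \emph{not} in general Jordan measurable, and the first claim simply fails without a further hypothesis: if, say, $q = 1$ and $A \subset [0,1]$ is a small open neighbourhood of $\Q \cap [0,1]$ with $\mu(A) < 1$, then every $n/N$ is rational, so $\tilde A = \{1,\dots,N\}$ and $|\tilde A|/N = 1 > \mu(A)$ for every $N$. The paper flags exactly this obstruction in a footnote to equation~\eqref{equi}. Similarly, your uniform Riemann-sum claim for $1_{\tilde A}\ast 1_{-\tilde A}$ requires a quantitative modulus of continuity for $1_A\ast 1_{-A}$, which need not exist for an arbitrary open $A$; the mere pointwise continuity of the convolution is not enough to control the discrepancy uniformly in $n$.

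The missing idea is the preliminary inner approximation: replace $A$ by $A' \subset A$, a finite union of boxes $\{a\} \times \bigl(\tfrac{i-1}{K},\tfrac{i}{K}\bigr)$ with $\mu(A') \geq \mu(A) - \tfrac{1}{32}\eps$. Since $A' \subset A$, the hypothesis $\mu(\Sym_\delta(A')) \leq \mu(\Sym_\delta(A)) \leq 4\mu(A') - \tfrac{7}{8}\eps$ is preserved (at the cost of a harmless constant), and now $1_{A'}\ast 1_{-A'}$ is $K$-Lipschitz, so the Riemann sums for $|\tilde A'|/N$ and for the convolution converge with an explicit $o_{K;N\to\infty}(1)$ error. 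Everything downstream then goes through as you wrote it. Also note that even after this fix you cannot directly estimate $\frac{1}{N}|\{n : \pi(n) \in \Sym_\delta(A')\}|$ by a Riemann sum (a superlevel set of a Lipschitz function still need not be Jordan measurable); the paper instead sandwiches $1_{[\delta,\infty)}$ between Lipschitz cutoffs, a step you would also need to supply.
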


\begin{proof}
Let $A \subset \Z/q\Z \times [0,1]$ be an open set such that $\mu(\Sym_{\delta}(A)) \leq 4 \mu(A) - \eps$. Then for some positive integer $K$ depending on $\eps$ and $A$ there is a subset $A'\subset A$, a union of sets of the form $\{a\} \times \left(\frac{i-1}{K}, \frac{i}{K}\right)$, such that $\mu(A')\geq\mu(A) - \frac{1}{32}\eps$. (Note that none of our final quantities can or will depend on $K$.) Then since $A'\subset A$, \[\mu(\Sym_\delta(A')) \leq \mu(\Sym_\delta(A)) \leq 4\mu(A) - \eps \leq 4\mu(A') - \tfrac{7}{8}\eps.\] With an abuse of notation rename $A'$ simply $A$.

For $N$ a large multiple of $q$, consider the map $\pi : \{1,\dots,N\} \rightarrow \Z/q\Z \times [0,1]$ defined by $\pi(n) = (n \md{q}, n/N)$. It is clear (see Lemma \ref{lem-a7}) that for large $N$ the image of $\{1,\dots,N\}$ under $\pi$ is highly equidistributed in $\Z/q\Z \times [0,1]$. In particular we have
\begin{equation}\label{equi}  \E_{n \leq N} \psi(\pi(n)) = \int_{\Z/q\Z \times [0,1]} \psi(x) d\mu(x) + o_{K; N \rightarrow \infty}(1)\end{equation} whenever $\psi$ is ``nice'', in particular whenever $\psi$ has one of the following three forms:
\begin{enumerate}
\item the characteristic function of a union of sets $\{a\} \times \left(\frac{i-1}{K}, \frac{i}{K}\right)$,
\item the characteristic function of the intersection of a set of type (i) with a translate of another set of type (i),
\item a continuous function with Lipschitz constant $K$.
\end{enumerate}
(Note that, conditional on one of these hypotheses, the quantity $o_{K; N \rightarrow \infty}(1)$ is asserted to be independent of $\psi$.)

In particular, if $B = \pi^{-1}(A)$, by case (i) of \eqref{equi} we have\footnote{Note that this would not be true if $A$ were an arbitrary open set, for example if $A$ were a set of small measure containing $\Z/q\Z \times (\Q \cap [0,1])$.}
\begin{equation}\label{limiting-1} \mu(B) = \E_{n \leq N} 1_A(\pi(n)) =  \mu(A) + o_{K; N \rightarrow \infty}(1).\end{equation} 
Furthermore we claim that
\begin{equation}\label{limiting-2}  \mu(\Sym_{2\delta}(B)) \leq \mu(\Sym_{\delta}(A)) + o_{K,\delta; N \rightarrow \infty}(1).\end{equation}  This is a little trickier to justify. First note that by case (ii) of \eqref{equi} that
\begin{align*}
1_B \ast 1_{-B}(n) & = \E_{m \leq N} 1_A (\pi(m))\, 1_A(\pi(m) - \pi(n)) \\ & = \int_{\Z/q\Z \times [0,1]} 1_A (x)\, 1_A (x - \pi(n))d\mu(x) + o_{K; N \rightarrow \infty}(1) \\ & = 1_A \ast 1_{-A} (\pi(n)) + o_{K; N \rightarrow \infty}(1).
\end{align*}
In particular if $N > N_0(K,\delta)$ is large enough then if $n \in \Sym_{2\delta}(B)$ then $\pi(n) \in \Sym_{3\delta/2}(A)$, that is to say if $1_B \ast 1_{-B}(n) \geq 2\delta$ then $1_A \ast 1_{-A}(\pi(n)) \geq 3\delta/2$. Now let $\chi : [0,1] \rightarrow [0,1]$ be a function such that $\chi(x) = 1$ for $x  \geq 3\delta/2$, $\chi(x) = 0$ for $x \leq \delta$, and $\chi$ has Lipschitz constant $O(1/\delta)$. What we have shown implies that if $N > N_0(K,\delta)$ then
\[ \E_{n \leq N} \chi \circ (1_A \ast 1_{-A})(\pi(n)) \geq \mu(\Sym_{2\delta}(B)) .\]
Now $1_A \ast 1_{-A}$ has Lipschitz constant at most $K$, so $\chi \circ (1_A \ast 1_{-A})$ has Lipschitz constant at most $O(K/\delta)$. Thus by case (iii) of \eqref{equi},
\begin{align*} \E_{n \leq N} \chi \circ (1_A \ast 1_{-A})(\pi(n)) & = \int_{\Z/q\Z \times [0,1]} \chi \circ (1_A \ast 1_A)(x) d\mu(x) + o_{K,\delta; N \rightarrow \infty}(1)\\ & \leq \mu (\Sym_{\delta}(A)) + o_{K,\delta; N \rightarrow \infty}(1).\end{align*} This completes the justification of the claim \eqref{limiting-2}. 

Comparing \eqref{limiting-1} and \eqref{limiting-2} and recalling the hypothesis that $\mu(\Sym_{\delta}(A)) \leq 4 \mu(A) - \frac{7}{8}\eps$, we see that if $N >N_0(K,\eps,\delta)$ is large enough then $|\Sym_{2\delta}(B)| \leq 4|B| - \frac{5}{6}\eps N$.  Choose $\delta \gg_\eps 1$ small enough that Theorem \ref{discthm} holds with $2\delta$ in place of $\delta$ and $\frac{5}{6}\eps$ in place of $\eps$. Then there is a progression $P \subset \{1,\dots,N\}$ of length $L=|P|\gg_{\eps} N$, say $P = \{x_0 + \lambda d : \lambda = 0,1,\dots, L-1\}$, such that $|B \cap P| \geq (\frac{1}{2} + \tfrac{1}{6}\eps)|P|$.

It is readily seen that the image $\pi(P)$ is highly equidistributed (as $N \rightarrow \infty$) on $\pi(x_0) + H \times I$, where $H \leq \Z/q\Z$ is the subgroup of index $\gcd(q,d)\leq d \ll_{\eps} 1$, and $I = \left[0,\, \frac{d L}{N} \right]$ has length $\frac{d L}{N} \gg_{\eps, \alpha} 1$, so by a variant of \eqref{equi}, case (i), we have \[\frac{|B\cap P|}{|P|} = \frac{\mu(A \cap (\pi(x_0) + H \times I))}{\mu(\pi(x_0) + H\times I)} + o_{\eps, K; N \to \infty}(1).\] Therefore, if $N$ is large enough depending on $\eps$ and $K$, \[\mu(A \cap (\pi(x_0) + H \times I)) \geq (\tfrac{1}{2} + \tfrac{1}{7} \eps) \mu(\pi(x_0) + H \times I).\qedhere\]
\end{proof}

We devote the rest of this section to the proof of Theorem \ref{discthm}. The argument uses several nontrivial ingredients: the arithmetic regularity lemma (Lemma \ref{ar}) again, a ``stability'' version of Kemperman's theorem due to Tao~\cite{tao-blog},\cite[Section 3.2]{tao-kemperman} and the Brunn-Minkowski theorem. We begin with the regularity lemma. 
Let the hypotheses be as in Theorem \ref{discthm}, thus $A \subseteq \{1,\dots,N\}$ is a set with $|\Sym_{\delta}(A)| \leq 4|A| - \eps N$. Let $\mathcal{F} : \N \rightarrow \R_+$ be a growth function depending on $\eps$ to be chosen later. Let $\tilde\eps = \min\left(\eps,\tfrac{1}{1000}\right)$. Then there is some $M \ll_{\eps,\mathcal{F}}1$ such that \[ 1_A = f_{\tor} + f_{\sml} + f_{\unf},\]where $\| f_{\sml} \|_{\ell^2(N)} \leq \tilde\eps^{10}$, $\| f_{\unf} \|_{U^2(N)} \leq 1/\mathcal{F}(M)$ and \[f_{\tor} = F(n \md{q}, n/N, \theta n)\] for some $F : \Z/q\Z \times [0,1] \times(\R/\Z)^d \to [0,1]$ such that $q,d,\|F\|_{\Lip} \leq M$ and for some $(\mathcal{F}(M),N)$-irrational $\theta\in(\R/\Z)^d$. As usual we abbreviate $(\R/\Z)^d$ to $\T$.

Let $\tilde M = \lceil{\tilde\eps}^{-10} M\rceil$ and consider, for $a \in \Z/q\Z$ and $i \in \{1,\dots, \tilde M\}$, the progressions \[I_{a,i} = \left\{ n \in  \left( \frac{(i-1)N}{\tilde M}, \frac{iN}{\tilde M} \right]: n \equiv a\ \md{q} \right\}.\] Define $F_{a,i}:\T\to[0,1]$ by $F_{a,i}(x) = F(a,i/{\tilde M},x)$. Then because $F$ is $M$-Lipschitz, $F_{a,i}$ is $M$-Lipschitz and $f_{\tor}$ differs by at most ${\tilde\eps}^{10}$ from a function $f_{\struct}$ which we define by
\[ f_{\struct}(n) = \sum_{a \mdsub{q}} \sum_{i=1}^{\tilde M} 1_{I_{a,i}}(n) F_{a,i}(\theta n). \]
Absorbing the error of $\tilde\eps^{10}$ into $f_{\sml}$, we have a decomposition
\[ 1_A = f_{\struct} + f'_{\sml} + f_{\unf}\]
where $\|f'_{\sml}\|_{\ell^2(N)} \leq 2{\tilde\eps}^{10}$ and $\|f_{\unf}\|_{U^2(N)} \leq 1/\mathcal{F}(M)$. Now given an arbitrary growth function $\tilde{\mathcal{F}}$ depending on ${\eps}$, we may choose $\mathcal{F}$ to grow sufficiently rapidly depending on ${\eps}$ so that $\mathcal{F}(M) \geq\tilde{\mathcal{F}}(\tilde M)$, whence $\|f_{\unf}\|_{U^2(N)} \leq 1/{\tilde{\mathcal{F}}}({\tilde M})$ and $\theta$ is $(\tilde{\mathcal{F}}(\tilde{M}),N)$-irrational. Clearly we may now rename $\tilde M$ as $M$, $f'_{\sml}$ as $f_{\sml}$ and $\tilde{\mathcal{F}}$ as $\mathcal{F}$, so that
\begin{equation}\label{arith-decomp} 1_A = f_{\struct} + f_{\sml} + f_{\unf},\end{equation}
where $\|f_{\sml}\|_{\ell^2(N)}\leq 2{\tilde\eps}^{10}$, $\|f_{\unf}\|_{U^2(N)} \leq1/\mathcal{F}(M)$, 
\[ f_{\struct}(n) = \sum_{a \mdsub{q}} \sum_{i=1}^{M} 1_{I_{a,i}}(n) F_{a,i}(\theta n),\]
and \[ I_{a,i} = \left\{ n \in  \left(\frac{(i-1)N}{M}, \frac{iN}{M}\right]: n \equiv a \md{q} \right\}.\]

Write $\alpha(a,i)$ for the density of $A$ on $I_{a,i}$. We will show that $\alpha(a,i)\geq\tfrac{1}{2} + \tfrac{1}{5}\eps$ for some $(a,i)$. Note that while $|I_{a,i}|$ need not be exactly $N/qM$, at worst it differs from $N/qM$ by $2$. We will deal with this small discrepancy taking $N\geq N_0(\eps)$ sufficiently large depending on $\eps$. This is acceptable: if $N<N_0(\eps)$ then Theorem \ref{discthm} is trivially satisfied by taking $P$ to be a suitable singleton.\footnote{Alternatively, one could arrange that $N$ is always multiple of $qM$, in which case $|I_{a,i}|$ is exactly $N/qM$.}

We proceed by examining how the behaviour of $1_A$ is modelled by the more ``structured'' functions $F_{a,i}(\theta n)$, which in view of the decomposition \ref{arith-decomp} involves estimating the effect of $f_{\sml}$ and $f_{\unf}$. The term $f_{\sml}$ is the more troublesome of the two. The following simple lemma is useful here.

\begin{lemma}\label{setE}
For all $(a,i)\in\Z/q\Z\times\{1,\dots,M\}$ outside an exeptional subset $E$ of size at most ${\tilde\eps}^4 qM$ we have
$\E_{n\in I_{a,i}} |f_{\sml}(n)| \leq {\tilde\eps}^5$.
\end{lemma}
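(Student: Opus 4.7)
The plan is a straightforward Cauchy--Schwarz plus Markov argument applied to the global $\ell^2$-bound on $f_{\sml}$.

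First, I would use Cauchy--Schwarz to convert $\ell^1$ estimates on each progression $I_{a,i}$ to $\ell^2$ estimates: for every $(a,i)$,
\[\E_{n\in I_{a,i}} |f_{\sml}(n)| \leq \bigl(\E_{n\in I_{a,i}} f_{\sml}(n)^2\bigr)^{1/2}.\]
So it suffices to show that $\E_{n\in I_{a,i}} f_{\sml}(n)^2 \leq \tilde\eps^{10}$ for all $(a,i)$ outside a set of size at most $\tilde\eps^4 qM$.

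Next, since $\{I_{a,i}\}$ is a partition of $\{1,\dots,N\}$ (up to the rounding issue), we get
\[\sum_{(a,i)} |I_{a,i}| \cdot \E_{n\in I_{a,i}} f_{\sml}(n)^2 = \sum_{n=1}^{N} f_{\sml}(n)^2 = N\, \|f_{\sml}\|_{\ell^2(N)}^2 \leq 4\tilde\eps^{20} N.\]
Because $|I_{a,i}|$ differs from $N/(qM)$ by at most $2$, and we may assume $N$ is large enough (depending on $\eps$) that $N/(qM) \geq 4$, we have $|I_{a,i}| \geq N/(2qM)$. Rearranging yields
\[\sum_{(a,i)} \E_{n\in I_{a,i}} f_{\sml}(n)^2 \leq 8\, qM\, \tilde\eps^{20}.\]

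Finally I would apply Markov's inequality: the number of pairs $(a,i)$ for which $\E_{n\in I_{a,i}} f_{\sml}(n)^2 > \tilde\eps^{10}$ is at most $8 qM\, \tilde\eps^{10} \leq \tilde\eps^4 qM$ (using $\tilde\eps \leq 1/1000$ to absorb the factor $8$). For all other pairs, Cauchy--Schwarz gives $\E_{n\in I_{a,i}} |f_{\sml}(n)| \leq \tilde\eps^5$, as required. No real obstacle here; the only mildly annoying point is the $\pm 2$ rounding on $|I_{a,i}|$, which is why the proof tacitly assumes $N\geq N_0(\eps)$ (as already agreed in the preceding paragraph of the paper).
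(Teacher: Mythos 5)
Your proof is correct and is essentially the same argument as the paper's, just presented the other way around: the paper argues by contradiction, summing $\ell^1$ averages over the putative bad pairs and applying Cauchy--Schwarz globally at the end, whereas you apply Cauchy--Schwarz per progression and then invoke Markov on the $\ell^2$ averages. Both hinge on the same ingredients (the near-partition by the $I_{a,i}$, the global bound $\|f_{\sml}\|_{\ell^2(N)}\leq 2\tilde\eps^{10}$, and a counting step), and your handling of the $\pm 2$ rounding and the constant $8$ absorbed via $\tilde\eps\leq 1/1000$ matches the paper's level of care.
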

\begin{proof}
If this were not the case we would have \[\E_{n\leq N} |f_{\sml}(n)| > \frac{1}{N}\left(\frac{N}{qM} - 2\right) qM{\tilde\eps}^9 \geq 2{\tilde\eps}^{10},\] whence by Cauchy-Schwarz $\|f_{\sml}\|_{\ell^2(N)} > 2{\tilde\eps}^{10}$, a contradiction.
\end{proof}

\begin{lemma}\label{lem0.5}
Let $E$ be as in the preceding lemma. For all $(a,i)\in\Z/q\Z\times\{1,\dots,M\}$ outside $E$ we have $\int_{\T} F_{a,i} \geq \alpha(a,i) - {\tilde\eps}^4$. 
\end{lemma}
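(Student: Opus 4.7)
The plan is to estimate $\alpha(a,i)=\E_{n\in I_{a,i}}1_A(n)$ by plugging in the decomposition \eqref{arith-decomp} and controlling each of the three resulting pieces on the progression $I_{a,i}$.

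First I would observe that on $I_{a,i}$ only the $(a,i)$ summand of $f_{\struct}$ contributes, so that $f_{\struct}(n)=F_{a,i}(\theta n)$ for every $n\in I_{a,i}$. Thus
\[ \alpha(a,i) = \E_{n\in I_{a,i}} F_{a,i}(\theta n) + \E_{n\in I_{a,i}} f_{\sml}(n) + \E_{n\in I_{a,i}} f_{\unf}(n). \]
The $f_{\sml}$ contribution is handled for free by Lemma \ref{setE}: since $(a,i)\notin E$,
\[ \bigl|\E_{n\in I_{a,i}} f_{\sml}(n)\bigr| \leq \E_{n\in I_{a,i}}|f_{\sml}(n)| \leq \tilde\eps^{5}. \]

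Next, I would deal with the uniform part. The progression $I_{a,i}$ has common difference $q\leq M$ and length of order $N/(qM)$, so applying the appendix lemma (Lemma \ref{gowers-progressions}) that bounds averages of $U^2$-small functions over arithmetic progressions, and using that $\|f_{\unf}\|_{U^2(N)}\leq 1/\mathcal{F}(M)$, gives
\[ \bigl|\E_{n\in I_{a,i}} f_{\unf}(n)\bigr| \ll_{M} 1/\mathcal{F}(M)^{c}, \]
which is at most $\tilde\eps^{5}$ provided $\mathcal{F}$ was chosen to grow sufficiently rapidly (depending only on $\eps$).

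Finally, I would compare $\E_{n\in I_{a,i}} F_{a,i}(\theta n)$ with $\int_{\T} F_{a,i}$. Here $F_{a,i}$ is $M$-Lipschitz and $\theta$ is $(\mathcal{F}(M),N)$-irrational; since $I_{a,i}$ is an arithmetic progression of length $\gg_{\eps,M} N$, Lemma \ref{distribution-integral} (applied on the progression, which is essentially the content of the equidistribution statement used already in Section \ref{sec3}) yields
\[ \E_{n\in I_{a,i}} F_{a,i}(\theta n) = \int_{\T} F_{a,i}\, d\mu + o_{\eps,M;N\to\infty}(1), \]
which is $\leq \int_{\T} F_{a,i} + \tilde\eps^{5}$ for $N\geq N_0(\eps)$ large enough and $\mathcal{F}$ growing quickly enough. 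Combining the three estimates yields $\alpha(a,i)\leq \int_{\T} F_{a,i} + 3\tilde\eps^{5}\leq \int_{\T} F_{a,i}+\tilde\eps^{4}$, as required.

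The only mildly delicate point is the second step, where one must pass from a Gowers-uniformity bound on $\{1,\dots,N\}$ to a bound on an average over the progression $I_{a,i}$; but this is exactly what the appendix lemma is designed for, and the choice of $\mathcal{F}$ absorbs the resulting loss. Everything else is routine bookkeeping with the regularity decomposition.
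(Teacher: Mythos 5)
Your argument is correct and follows the paper's proof essentially verbatim: decompose $1_A$ on $I_{a,i}$, bound the $f_{\sml}$ term via Lemma \ref{setE}, the $f_{\unf}$ term via Lemma \ref{gowers-progressions} with $\mathcal{F}$ growing fast enough, and pass from $\E_{n\in I_{a,i}} F_{a,i}(\theta n)$ to $\int_\T F_{a,i}$ by equidistribution. The one small slip is that for the last step you should cite Lemma \ref{distribution-integral-a} (equidistribution of $\theta n$ over a progression) rather than Lemma \ref{distribution-integral} (equidistribution of the full triple over $\{1,\dots,N\}$), though your phrase ``applied on the progression'' makes clear you intended the former.
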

\begin{proof}
By Lemma \ref{gowers-progressions} the average of $f_{\unf}$ over any progression $I_{a,i}$ is less than $\tfrac{1}{3}{\tilde\eps}^4$ provided that $\mathcal{F}$ grows sufficiently rapidly, and by Lemma \ref{setE} for all $(a,i)\notin E$ the average of $f_{\sml}$ on $I_{a,i}$ is also at most $\tfrac{1}{3}{\tilde\eps}^4$. Thus if $(a,i)\notin E$ we have
\[ \alpha(a,i) = \E_{n \in I_{a,i}} 1_A(n) \leq \E_{n \in I_{a,i}} F_{a,i}(\theta n) + \tfrac{2}{3}{\tilde\eps}^4  \leq \int_{\T} F_{a,i} + {\tilde\eps}^4.\]
Assuming $\mathcal{F}$ grows sufficiently rapidly, the last step follows from the $(\mathcal{F}(M),N)$-irrationality of $\theta$ and Lemma \ref{distribution-integral-a}. 
 \end{proof}

We need a slightly technical lemma concerning level sets of Lipschitz functions.

\begin{lemma}\label{soft-thresholding}
Let $\eta > 0$. If $\mathcal{F}$ grows sufficiently quickly depending on $\eta$ then the following is true. If $F : \T \rightarrow [0,1]$ is $M$-Lipschitz, $\theta$ is $(\mathcal{F}(M),N)$-irrational and $I \subset \{1,\dots,N\}$ is any progression of length at least $N/M^2$, then the proportion of $n \in I$ such that $F(n\theta) > \eta$ is at least $\mu(\{x \in \T : F(x) > 2\eta\}) - \eta$.
\end{lemma}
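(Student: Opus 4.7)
The plan is to approximate the indicator $1_{\{F > \eta\}}$ from below by a Lipschitz function, and then apply equidistribution of the orbit $\{n\theta\}_{n \in I}$ on the torus.

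First I would fix a "soft threshold" cutoff $\chi : [0,1] \to [0,1]$ such that $\chi(t) = 0$ for $t \leq \eta$, $\chi(t) = 1$ for $t \geq 2\eta$, and $\chi$ is linear on $[\eta, 2\eta]$, so that $\|\chi\|_{\Lip} \leq 1/\eta$. Then the composition $\chi \circ F : \T \to [0,1]$ is $O(M/\eta)$-Lipschitz, and for every $x \in \T$ we have the pointwise sandwich
\[ 1_{\{F > 2\eta\}}(x) \leq \chi(F(x)) \leq 1_{\{F > \eta\}}(x). \]

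Next I would use the lower inequality to observe that the proportion of $n \in I$ with $F(n\theta) > \eta$ is bounded below by $\E_{n \in I} \chi(F(n\theta))$. Equidistribution of $\{n\theta : n \in I\}$ on $\T$ (a variant of Lemma \ref{distribution-integral-a}, applied to the subprogression $I$ of length $\geq N/M^2$) then gives
\[ \E_{n \in I} \chi(F(n\theta)) = \int_{\T} \chi \circ F \, d\mu + \text{error}, \]
where the error depends on the Lipschitz constant $O(M/\eta)$ of $\chi \circ F$, on the length of $I$, and on the irrationality parameter of $\theta$. Since $\theta$ is $(\mathcal{F}(M), N)$-irrational, choosing $\mathcal{F}$ to grow sufficiently fast in terms of $\eta$ (and noting that $M/\eta$ and $1/M^2$ are both bounded polynomially in $M$) makes this error at most $\eta$.

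Finally, by the upper half of the sandwich, $\int_{\T} \chi \circ F \, d\mu \geq \mu(\{x : F(x) > 2\eta\})$. Combining the two estimates yields
\[ \#\{n \in I : F(n\theta) > \eta\}/|I| \geq \mu(\{F > 2\eta\}) - \eta, \]
as required. The main (only) obstacle is that the Lipschitz constant of $\chi \circ F$ depends on both $M$ and $1/\eta$, so one needs the equidistribution lemma to be robust enough to handle functions whose Lipschitz constant exceeds $M$; this is why the hypothesis allows $\mathcal{F}$ to grow as fast as is needed in terms of $\eta$, and it is why the conclusion only recovers the level set $\{F > 2\eta\}$ rather than $\{F > \eta\}$ itself.
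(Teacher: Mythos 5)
Your proof is correct and is essentially the paper's own argument: both introduce the same piecewise-linear cutoff $\tilde\chi$ (equal to $0$ below $\eta$, $1$ above $2\eta$, with Lipschitz constant $O(1/\eta)$), use the sandwich $1_{\{F > 2\eta\}} \leq \tilde\chi \circ F \leq 1_{\{F > \eta\}}$, and then apply the equidistribution estimate (Lemma~\ref{distribution-integral-a}) to the $O(M/\eta)$-Lipschitz function $\tilde\chi \circ F$ on the progression $I$, absorbing the error into the $\eta$ loss by letting $\mathcal{F}$ grow fast enough in terms of $\eta$.
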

\begin{proof}
We want to compute $\E_{n \in I} \chi \circ F(n\theta)$, where $\chi$ is the cutoff $1_{x \geq \eta}$. Replace $\chi$ by a function $\tilde\chi$ with $\| \tilde\chi\|_{\Lip} \ll 1/\eta$ such that $\tilde\chi(x) = 0$ for $x < \eta$ and $\tilde\chi(x) = 1$ for $x \geq 2\eta$. Then $\E_{n \in I} \chi \circ F(n\theta) \geq \E_{n \in I} \tilde\chi \circ F(n\theta)$. However the function $\tilde\chi \circ F$ is Lipschitz with $\| \tilde\chi \circ F\|_{\Lip} \ll M/\eta$ and so, if $\mathcal{F}$ grows sufficiently rapidly, since $\theta$ is so irrational, Lemma \ref{distribution-integral-a} implies that $\E_{n \in I}\tilde\chi \circ F (n\theta) \geq  \int_{\T} \tilde\chi \circ F - \eta$. On the other hand the integral here is at least the measure of $\{x : F(x) \geq 2\eta\}$.
\end{proof}

The following lemma has more meat to it and is a crucial ingredient of our argument. It encodes the fact that if $B_1, B_2$ are open subsets of a torus then the measure $\mu(B_1 + B_2)$ is at least $\min(\mu(B_1) + \mu(B_2),1)$, a 1953 result due to Macbeath \cite{macbeath}. More accurately, we require a ``robust'' version of this result which was obtained in \cite[Proposition 6.1]{green-ruzsa}, and recently given the following elegant formulation by Tao \cite{tao-kemperman}: if $S_1,S_2\subset\T$ are open and $0\leq t\leq\min(\mu(S_1),\mu(S_2))$ then \begin{equation}\label{robustmacbeath}\int_{\T} \min(1_{S_1}\ast 1_{S_2},t)\,d\mu \geq t\min(\mu(S_1) + \mu(S_2) - t, 1).\end{equation}

\begin{lemma}\label{lipschitz-sum}
Let $0<\eta<1$ and suppose that $F_1, F_2 : \T \rightarrow [0,1]$ are $M$-Lipschitz functions such that $\int F_1, \int F_2 \geq 2\eta^{1/6}$. Then the measure of the set of $x$ for which $F_1 \ast F_2(x) \geq \eta$ is at least $\min\left(\int F_1 + \int F_2, 1\right) - 4\eta^{1/6}$.
\end{lemma}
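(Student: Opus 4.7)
The plan is to reduce the lemma to the robust Macbeath inequality \eqref{robustmacbeath} applied to a suitable pair of superlevel sets of $F_1$ and $F_2$, and then to convert the resulting integrated estimate into a pointwise superlevel bound on $F_1 \ast F_2$ via the layer cake representation.

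First I set $\rho = \eta^{1/6}$ and let $S_i = \{F_i > \rho\}$. Since $F_i$ is continuous, $S_i$ is open; and by a reverse Markov argument $\mu(S_i) \geq \int F_i - \rho \geq \beta_i - \rho \geq \eta^{1/6}$, where $\beta_i := \int F_i$. Since $F_i \geq \rho \cdot 1_{S_i}$ pointwise, we have $F_1 \ast F_2 \geq \rho^2 (1_{S_1} \ast 1_{S_2})$, so for any $\tau > 0$,
\[
\min(F_1 \ast F_2,\, \rho^2 \tau) \;\geq\; \rho^2 \min(1_{S_1} \ast 1_{S_2},\, \tau).
\]

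Next I apply \eqref{robustmacbeath} with $\tau = \eta^{1/2}$, which is valid since $\eta^{1/2} \leq \eta^{1/6} \leq \min(\mu(S_1),\mu(S_2))$. Writing $M := \min(\beta_1 + \beta_2,\, 1)$, a short case check shows $\min(\mu(S_1) + \mu(S_2) - \tau,\, 1) \geq M - 2\rho - \tau \geq M - 3\eta^{1/6}$, so combining with the previous display yields
\[
\int \min(F_1 \ast F_2,\, \eta^{5/6}) \,d\mu \;\geq\; \eta^{5/6}(M - 3\eta^{1/6}).
\]
Let $h(s) := \mu(\{F_1 \ast F_2 \geq s\})$, which is non-increasing in $s$. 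By the layer cake formula $\int \min(F_1 \ast F_2,\, \eta^{5/6}) \,d\mu = \int_0^{\eta^{5/6}} h(s)\,ds$, and bounding $h \leq 1$ on $[0,\eta]$ and $h \leq h(\eta)$ on $[\eta, \eta^{5/6}]$ gives the upper bound $\eta + (\eta^{5/6} - \eta) h(\eta)$. Rearranging,
\[
h(\eta) \;\geq\; \frac{M - 4\eta^{1/6}}{1 - \eta^{1/6}} \;\geq\; M - 4\eta^{1/6},
\]
where in the last step one uses $M \geq 4\eta^{1/6}$ (which holds whenever the conclusion is non-trivial, since $\beta_i \geq 2\eta^{1/6}$ forces $\beta_1 + \beta_2 \geq 4\eta^{1/6}$).

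The main obstacle is calibrating the three thresholds so that their errors align: $\rho = \eta^{1/6}$ is dictated by the $4\eta^{1/6}$ error in the target; $\tau = \eta^{1/2}$ must satisfy $\tau \leq \min(\mu(S_i))$; and the induced threshold $\rho^2 \tau = \eta^{5/6}$ on $F_1 \ast F_2$ must exceed the target $\eta$ by a factor of $\eta^{-1/6}$, large enough to absorb the layer cake splitting error. The key conceptual move is to apply \eqref{robustmacbeath} not at the target threshold $\eta$ itself, but at the larger threshold $\eta^{5/6}$, so that monotonicity of $h$ lets us extract pointwise superlevel information from the integrated bound that \eqref{robustmacbeath} directly provides.
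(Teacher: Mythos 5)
Your proof is correct. It shares the overall architecture of the paper's argument --- pass to superlevel sets $S_i$ of the $F_i$, apply the robust Macbeath inequality \eqref{robustmacbeath} to $1_{S_1}, 1_{S_2}$, and then convert the resulting integrated bound into a pointwise superlevel bound on $F_1 \ast F_2$ --- but the parameter choices and the conversion step differ. The paper cuts the $F_i$ at height $\eta^{1/3}$ (so that $F_1 \ast F_2 \geq \eta^{2/3}\, 1_{S_1} \ast 1_{S_2}$ pointwise) and applies \eqref{robustmacbeath} at level $t = \eta^{1/6}$; the resulting inequality is then read off directly against the set $X = \{1_{S_1} \ast 1_{S_2} \geq \eta^{1/3}\}$ via the elementary estimate $\int \min(1_{S_1}\ast 1_{S_2},\eta^{1/6})/\eta^{1/6} \leq \mu(X) + \eta^{1/6}$, and on $X$ one has $F_1 \ast F_2 \geq \eta^{2/3}\cdot\eta^{1/3} = \eta$. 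You instead cut at height $\eta^{1/6}$, apply \eqref{robustmacbeath} at level $\tau = \eta^{1/2}$, and then extract the superlevel bound for $\{F_1 \ast F_2 \geq \eta\}$ from the integrated estimate on $\min(F_1\ast F_2, \eta^{5/6})$ via the layer cake representation of $h(s) = \mu(\{F_1 \ast F_2 \geq s\})$ together with a split at $s = \eta$. Both are essentially averaging/Chebyshev arguments and both land on the same constant $4\eta^{1/6}$; the paper's route is slightly more direct since the intermediate set $X$ is already defined in terms of $1_{S_1}\ast 1_{S_2}$, whereas your route requires optimizing the three thresholds $\rho,\tau,\rho^2\tau$ against one another. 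One very minor point: your parenthetical justification of $M \geq 4\eta^{1/6}$ from $\beta_1+\beta_2 \geq 4\eta^{1/6}$ is only immediate when $\beta_1+\beta_2 \leq 1$; when $\beta_1+\beta_2 > 1$ one has $M = 1$, and $1 \geq 4\eta^{1/6}$ is not automatic for all $\eta \in (0,1)$. This does no damage, since --- as you also observe --- if $M < 4\eta^{1/6}$ the conclusion of the lemma is vacuous, so the dichotomy (trivial case or $M \geq 4\eta^{1/6}$) is the clean way to finish, without appeal to the hypothesis on $\beta_i$.
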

\begin{proof}
Let $S_i = \{ x : F_i(x) > \eta^{1/3}\}$ for $i = 1,2$. Clearly $\mu(S_i) \geq \int F_i - \eta^{1/3}$, so in particular $\mu(S_1), \mu(S_2)\geq \eta^{1/6}$. By \eqref{robustmacbeath} we therefore have
\[ \int_{\T} \frac{\min(1_{S_1} \ast 1_{S_2}(x), \eta^{1/6})}{\eta^{1/6}} dx \geq  \min(\mu(S_1) + \mu(S_2) - \eta^{1/6}, 1).\]
Writing $X$ for the set of $x\in\T$ such that $1_{S_1} \ast 1_{S_2}(x) \geq \eta^{1/3}$, the left-hand side here is bounded by $\mu(X) + \eta^{1/6}$, so $\mu(X) \geq \min(\int F_1 + \int F_2, 1) - 4\eta^{1/6}$. On the other hand, for $x \in X$ we certainly have $F_1 \ast F_2(x) \geq \eta^{2/3} 1_{S_1}\ast1_{S_2}(x)\geq \eta$.
\end{proof}

\begin{lemma}\label{lem0.6}
If $(a,i),(a',i')\notin E$ and $\alpha(a,i)$, $\alpha(a',i') \geq 2{\tilde\eps}^2$ then 
\[ |\Sym_{{\tilde\eps}^{20}/10M^2}(A) \cap I_{a-a',i-i'}| \geq \frac{N}{qM} \min( \alpha(a,i) + \alpha(a',i'), 1)- \frac{10{\tilde\eps}^2 N}{qM},\]
and the same bound holds for $|\Sym_{{\tilde\eps}^{20}/10M^2}(A) \cap I_{a-a',i-i'+1}|$.
\end{lemma}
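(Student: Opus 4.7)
Write $g_{a,i}(n) := 1_{I_{a,i}}(n) F_{a,i}(\theta n)$, so that $f_{\struct} = \sum_{a,i} g_{a,i}$. The plan is to lower-bound $1_A \ast 1_{-A}(n)$ at many $n \in I_{a-a',i-i'}$ via the structured sub-convolution $g_{a,i} \ast g_{a',i'}^-$, and then transfer this bound to $1_A \ast 1_{-A}$ using the decomposition $1_A = f_{\struct} + f_{\sml} + f_{\unf}$.

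For $n \in I_{a-a',i-i'}$, writing $n = (i-i')N/M - x$ with $x \in [0, N/M)$, the constraints $m \in I_{a,i}$ and $m - n \in I_{a',i'}$ restrict $m$ to an interval $J_n$ of length $L_n = N/M - x$ (the congruence $m \equiv a\,(q)$ is automatic). By the $(\mathcal{F}(M),N)$-irrationality of $\theta$ and the $M^2$-Lipschitz character of $F_{a,i}(y) F_{a',i'}(y - \theta n)$ in $y$, Lemma \ref{distribution-integral-a} gives
\[ g_{a,i} \ast g_{a',i'}^-(n) = \frac{L_n}{qN}\,(F_{a,i} \ast F_{a',i'}^-)(\theta n) + o(1/M^2), \]
provided $\mathcal{F}$ grows sufficiently fast.

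I would then apply Lemma \ref{lipschitz-sum} to $F_{a,i}$ and $F_{a',i'}^-$: by Lemma \ref{lem0.5}, $\int F_{a,i}, \int F_{a',i'} \geq \alpha(a,i) - \tilde\eps^4 \geq \tilde\eps^2$, so choosing $\eta$ with $4\eta^{1/6} \leq \tilde\eps^2$ (say $\eta = c\tilde\eps^{12}$) yields
\[ \mu\bigl(\{z \in \T : (F_{a,i} \ast F_{a',i'}^-)(z) \geq \eta\}\bigr) \geq \min\bigl({\textstyle \int F_{a,i} + \int F_{a',i'}},\, 1\bigr) - \tilde\eps^2. \]
Because $F_{a,i} \ast F_{a',i'}^-$ is $M$-Lipschitz and $|I_{a-a',i-i'}| \geq N/M^2$, Lemma \ref{soft-thresholding} transfers this lower bound to the arithmetic progression. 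Intersecting with the $(1 - \tilde\eps^2)$-fraction of $n$ for which $L_n \geq \tilde\eps^2 N/M$ produces a subset $\mathcal{N} \subset I_{a-a',i-i'}$ of size at least
\[ \frac{N}{qM} \bigl( \min(\alpha(a,i) + \alpha(a',i'),\,1) - O(\tilde\eps^2) \bigr), \]
on which $g_{a,i} \ast g_{a',i'}^-(n) \gg \tilde\eps^{14}/(qM)$.

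To conclude, I would compare $1_A \ast 1_{-A}(n)$ with this main term. Since $f_{\struct} \ast f_{\struct}^-(n)$ is a nonnegative sum of terms $g_{a_1,i_1} \ast g_{a_2,i_2}^-(n)$ that contains $g_{a,i} \ast g_{a',i'}^-$ as one summand, one has $1_A \ast 1_{-A}(n) \geq g_{a,i} \ast g_{a',i'}^-(n) - |\Delta(n)|$, where $\Delta(n)$ collects the cross-terms involving $f_{\sml}$ or $f_{\unf}$. The $f_{\sml}$ contributions are controlled pointwise by combining $\|f_{\sml}\|_{\ell^2(N)} \leq 2\tilde\eps^{10}$ with the localized bound $\E_{I_{a,i}} |f_{\sml}| \leq \tilde\eps^5$ from Lemma \ref{setE}, which is available because $(a,i),(a',i') \notin E$; the $f_{\unf}$ contributions have small $\ell^2(N)$-norm by Plancherel and the estimate $\|\hat{f_{\unf}}\|_\infty \leq \|f_{\unf}\|_{U^2(N)} \leq 1/\mathcal{F}(M)$, hence by Markov are negligible off an exceptional set of size $o(N/(qM))$ once $\mathcal{F}$ is chosen sufficiently rapidly growing. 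Excising this exceptional set from $\mathcal{N}$ produces the required subset of $\Sym_{\tilde\eps^{20}/(10M^2)}(A) \cap I_{a-a',i-i'}$. The same argument with $L_n = N/M - y$ for $y = n - (i-i')N/M \in (0, N/M]$ handles $I_{a-a',i-i'+1}$. The main obstacle is the numerical bookkeeping in this last step: balancing the main term $\sim \tilde\eps^{14}/(qM)$ against the error contributions and the threshold $\tilde\eps^{20}/(10M^2)$ is tight, and requires the localized $f_{\sml}$ bound of Lemma \ref{setE} rather than just the global $\ell^2$ bound.
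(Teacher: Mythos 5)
Your proposal takes essentially the same route as the paper: reduce $\alpha(a,i)$ to $\int F_{a,i}$ via Lemma~\ref{lem0.5}, lower-bound the structured convolution on $I_{a-a',i-i'}$ using irrationality, apply Lemma~\ref{lipschitz-sum} (Macbeath/Kemperman) and Lemma~\ref{soft-thresholding}, then absorb $f_{\sml}$ via Lemma~\ref{setE} and $f_{\unf}$ via a Fourier/$U^2$ argument. The only cosmetic difference is that you re-derive the $f_{\unf}$ error control inline by Plancherel, $\|\widehat{f_{\unf}}\|_\infty \leq \|f_{\unf}\|_{U^2(N)}$, and Markov, whereas the paper packages this as Lemma~\ref{gowers-error}; these are the same estimate.
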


If $f$ is a function on an abelian group we write $f^\circ$ for the function $f^\circ(x) = f(-x)$.

\begin{proof}
Dealing with $I_{a-a',i-i'}$ and $I_{a-a',i-i'+1}$ are similar, so we focus on the former. By Lemma \ref{lem0.5} then, it suffices to prove 
\[ |\Sym_{{\tilde\eps}^{20}/10M^2}(A) \cap I_{a-a',i-i'}| \geq \frac{N}{qM} \min\left( \int F_{a,i} + \int F_{a',i'}, 1\right)- \frac{8{\tilde\eps}^2 N}{qM}\] for $(a,i)$ and $(a',i')$ outside $E$ and such that $\int F_{a,i}, \int F_{a',i'} \geq {\tilde\eps}^2$.

For all except maybe $2{\tilde\eps}^2 N/qM$ values of $d \in I_{a-a',i-i'}$ (those near the left ends),
\begin{equation}\label{not-endpoint} |I_{a,i} \cap (d + I_{a',i'})| \geq \frac{{\tilde\eps}^2 N}{qM}, \end{equation}
and for any such $d$ we have, if $\mathcal{F}$ is sufficiently rapidly growing,
\begin{align}
\nonumber f_{\struct}|_{I_{a,i}} \ast f^{\circ}_{\struct}|_{I_{a',i'}}(d)
 & = \sum_{n \in I_{a,i} \cap(d + I_{a',i'})} F_{a,i}(\theta n)\, F_{a',i'}(\theta (d+n)) \\
 & \geq |I_{a,i} \cap (d + I_{a',i'})| \left(F_{a,i} \ast F^{\circ}_{a',i'}(\theta d) - \tfrac{1}{4}{\tilde\eps}^{12}\right).\label{eq7}
\end{align}
Here we used the $(\mathcal{F}(M),N)$-irrationality of $\theta$, Lemma \ref{distribution-integral-a} and the fact that the product of two $M$-Lipschitz functions, each of which is bounded pointwise by $1$, is $2M$-Lipschitz. By Lemma \ref{lip-conv}, $F_{a,i}\ast F^\circ_{a',i'}$ is also $M$-Lipschitz, so again by the $(\mathcal{F}(M),N)$-irrationality of $\theta$ and by Lemma \ref{soft-thresholding} the proportion of $d\in I_{a-a',i-i'}$ such that $F_{a,i}\ast F^{\circ}_{a',i'}(\theta d) \geq \tfrac{1}{2}{\tilde\eps}^{12}$ is at least $\mu(Y) - {\tilde\eps}^{12}$, where \[Y=\{y : F_{a,i}\ast F^\circ_{a',i'}(y)\geq{\tilde\eps}^{12}\}.\] But by Lemma \ref{lipschitz-sum} with $\eta={\tilde\eps}^{12}$, $\mu(Y) \geq \min\left(\int F_{a,i} + \int F_{a',i'},1\right) - 4{\tilde\eps}^2$. Putting this all together, \[f_{\struct}|_{I_{a,i}}\ast f^\circ_{\struct}|_{I_{a',i'}}(d) \geq \frac{{\tilde\eps}^{14} N}{4qM}\] for a set of $d\in I_{a-a',i-i'}$ of size at least \[\frac{N}{qM} \min\left(\int F_{a,i} + \int F_{a',i'},\, 1\right) - \frac{7{\tilde\eps}^2 N}{qM}.\]

Now by Lemma \ref{setE} and Young's inequality (Lemma \ref{l1-conv}) we can absorb the contribution of $f_{\sml}$ and conclude that 
\[ (f_{\struct} + f_{\sml})|_{I_{a,i}} \ast (f_{\struct} + f_{\sml})^{\circ}_{I_{a',i'}} (d) \geq \frac{{\tilde\eps}^{14} N}{5qM}\] for these same values of $d$. Finally we add in the contribution of $f_{\unf}$. Recalling from \eqref{arith-decomp} that $1_A = f_{\struct} + f_{\sml} + f_{\unf}$, Lemma \ref{gowers-error} implies that
\[ 1_A|_{I_{a,i}} \ast 1_{-A}|_{I_{a',i'}}(d) \geq \frac{{\tilde\eps}^{14} N}{8qM}\] for all $d$ in a subset $I_{a-a',i-i'}$ of size at least
\[\frac{N}{qM} \min\left(\int F_{a,i} + \int F_{a',i'},\, 1\right) - \frac{8{\tilde\eps}^2 N}{qM}.\]
All these $d$ lie in $\Sym_{{\tilde\eps}^{14}/8qM}(A)$, which is of course contained in $\Sym_{{\tilde\eps}^{20}/10M^2}(A)$.\end{proof}

To use the bound supplied by the preceding lemma we apply the Brunn-Minkowski theorem, which states that if $X, Y \subset \R^d$ are open then $\mu(X + Y)^{1/d} \geq \mu(X)^{1/d} + \mu(Y)^{1/d}$. We require the case $d = 2$. For a wider discussion and proof, see \cite{brunn-minkowski-survey}.

\begin{lemma}\label{doubling-4}
Given a function $\alpha : \Z/q\Z \times \{1,\dots,M\} \to [0,1]$ and $(x,y) \in \Z/q\Z\times \{-M,\dots, M\}$, define $\tilde \alpha(x,y) = \max (\alpha(a,i) + \alpha(a',i'))$, where the maximum is taken over all $(a,i), (a',i')\in\Z/q\Z\times\{1,\dots,M\}$ such that $\alpha(a,i)$, $\alpha(a',i')>0$, $a-a' = x$ and either $i-i' = y$ or $i - i' + 1 = y$. Then
\[ \sum_{x,y} \tilde \alpha(x,y) \geq 4 \sum_{a,i} \alpha(a,i).\]
\end{lemma}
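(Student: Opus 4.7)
The plan is to lift the discrete data to $\R^{2}$ and apply the two-dimensional Brunn-Minkowski inequality. For each $a\in\Z/q\Z$, write $\beta(a)=\sum_{i}\alpha(a,i)$ and introduce the open planar ``subgraph'' sets
\[
\Omega_{a} \;=\; \bigcup_{i=1}^{M}(i-1,i)\times(0,\alpha(a,i)), \qquad
\Omega'_{a'} \;=\; \bigcup_{i'=1}^{M}(-i',-i'+1)\times(0,\alpha(a',i')),
\]
where $\Omega'_{a'}$ is $\Omega_{a'}$ reflected across the vertical axis. These have Lebesgue areas $\beta(a)$ and $\beta(a')$, and are nonempty precisely when those are positive.

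Now $\Omega_{a}+\Omega'_{a'}$ is the union, over pairs $(i,i')$ with $\alpha(a,i),\alpha(a',i')>0$, of rectangles $R_{i,i'}=(i-i'-1,\,i-i'+1)\times(0,\,\alpha(a,i)+\alpha(a',i'))$, every one of which touches the line $r=0$. Thus for non-integer $s\in(y-1,y)$ the horizontal slice at $s$ is a union of intervals all sharing the lower endpoint $0$ (one per pair $(i,i')$ with $i-i'\in\{y-1,y\}$), hence a single interval of length
\[
g_{a,a'}(y) \;:=\; \max\bigl\{\alpha(a,i)+\alpha(a',i') \,:\, i-i'\in\{y-1,y\},\; \alpha(a,i),\alpha(a',i')>0\bigr\}.
\]
Integrating in $s$ yields $|\Omega_{a}+\Omega'_{a'}|=\sum_{y}g_{a,a'}(y)$, and 2D Brunn-Minkowski then gives $\sum_{y}g_{a,a'}(y)\geq(\sqrt{\beta(a)}+\sqrt{\beta(a')})^{2}$.

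Since $\tilde\alpha(x,y)\geq g_{a,a'}(y)$ pointwise for any admissible pair with $a-a'=x$, we obtain $\sum_{y}\tilde\alpha(x,y)\geq\max_{a-a'=x}(\sqrt{\beta(a)}+\sqrt{\beta(a')})^{2}$. To sum over $x$, fix $a^{*}\in\Z/q\Z$ maximising $\beta$ and set $\gamma^{*}=\sqrt{\beta(a^{*})}$. For every $a$ in the support of $\beta$ the pair $(a,a^{*})$ is admissible at $x=a-a^{*}$, so
\[
\sum_{x,y}\tilde\alpha(x,y) \;\geq\; \sum_{a\,:\,\beta(a)>0}\bigl(\sqrt{\beta(a)}+\gamma^{*}\bigr)^{2} \;\geq\; \sum_{a\,:\,\beta(a)>0}\bigl(2\sqrt{\beta(a)}\bigr)^{2} \;=\; 4\sum_{a,i}\alpha(a,i),
\]
where the second inequality uses $\gamma^{*}\geq\sqrt{\beta(a)}$ from the maximality of $\beta(a^{*})$.

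The main subtlety is the slicing step: the horizontal reflection in the definition of $\Omega'_{a'}$ is precisely what forces every rectangle $R_{i,i'}$ to touch the line $r=0$, so that each horizontal slice collapses to an interval of length exactly $g_{a,a'}(y)$, the joint maximum appearing in $\tilde\alpha$. Had one instead worked with $\Omega_{a}-\Omega_{a'}$, the slices would have length $\max_{i}\alpha(a,i)+\max_{i'}\alpha(a',i')$, which is generically strictly larger than $g_{a,a'}(y)$ and is not aligned with $\tilde\alpha$. This careful alignment between the geometric area and the combinatorial maximum is exactly what allows Brunn-Minkowski to deliver the sharp factor~$4$.
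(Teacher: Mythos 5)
Your proof is correct and follows essentially the same strategy as the paper's: lift the data to two-dimensional subgraph regions, apply the planar Brunn--Minkowski inequality to each Minkowski sum, and exploit the fiber $a_\ast$ of largest area so that $(\sqrt{\beta(a)}+\sqrt{\beta(a_\ast)})^2 \geq 4\beta(a)$ for every $a$. The only differences are presentational: you reflect $\Omega'_{a'}$ horizontally and take a Minkowski sum where the paper reflects $X'$ vertically and takes a difference (these give the same sets), and you spell out the slice-by-slice collapse to a single interval that the paper leaves implicit when identifying $\nu(X-X')$ with $\sum_{x,y}\tilde\alpha(x,y)$.
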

\begin{proof}
Consider the open sets $X, X' \subset \Z/q\Z\times\R^2$ defined by
\[ X = \bigcup_{(a,i) \in \Z/q\Z \times\{1,\dots,M\}} \{a\} \times (i-1, i) \times (0, \alpha(a,i)),\]
\[ X' = \bigcup_{(a',i') \in \Z/q\Z \times\{1,\dots,M\}} \{a'\} \times (i'-1, i') \times (-\alpha(a',i'),0).\]
Note that
\begin{align*} X - X' & = \bigcup_{(a,i), (a',i')} \{a - a'\} \times (i - i' - 1, i - i' + 1) \times (0, \alpha(a,i) + \alpha(a', i')) \\ 
& = \bigcup_{(x,y) \in \Z/q\Z \times \{1,\dots,M\}} \{x\} \times (y-1, y) \times (0, \tilde\alpha(x,y)).\end{align*}
where in the last equality we have ignored a set of measure zero. Thus, if $\nu$ is the product of counting measure on $\Z/q\Z$ and Lebesgue measure $\lambda$ on $\R^2$, we have $\nu(X) = \nu(X') = \sum_{a,i} \alpha(a,i)$ and $\nu(X - X') = \sum_{x,y} \tilde\alpha(x,y)$.
It therefore suffices to show that \[\label{bm}  \nu(X - X') \geq 4\nu(X).\]

The case $q = 1$ of this is immediate from the Brunn-Minkowski inequality. A simple argument allows us to extend this to general $q$. Indeed, let $X_a, X'_a$ be the fibres of $X, X'$ respectively above $a \in\Z/q\Z$. Then $X_a, X'_a$ are open subsets of $\R^2$. Pick $a$ such that $\lambda(X_a) = \lambda(X'_a) = \sum_i \alpha(a,i)$ is largest. If $X_a\neq\emptyset$ then the Brunn-Minkowski inequality implies that
\[   \lambda(X_a - X'_{a_\ast}) \geq \left(\lambda(X_a)^{1/2} + \lambda(X_{a_\ast})^{1/2}\right)^2 \geq 4\lambda(X_a). \]
However the sets $X_a - X'_{a_\ast}$ are disjoint as $a$ ranges over $\Z/q\Z$, since each lies in a different fibre over $\Z/q\Z$. Therefore
\[ \nu(X-X') \geq \sum_{a: X_a\neq\emptyset} \lambda(X_a - X'_{a_\ast}) \geq \sum_{a:X_a\neq\emptyset} 4\lambda(X_a) = 4\lambda(X).\qedhere\]
\end{proof}

In fact we need the following more robust variant of the above, easily deduced from it.

\begin{lemma}\label{doubling-4-witheps}
Let $\eta>0$. Given a function $\alpha : \Z/q\Z \times\{1,\dots,M\} \to [0,1]$ and $(x,y) \in \Z/q\Z\times \{-M,\dots, M\}$, define $\tilde \alpha(x,y) = \max (\alpha(a,i) + \alpha(a',i'))$, where the maximum is taken over all $(a,i), (a',i')\in\Z/q\Z\times\{1,\dots,M\}$ such that $\alpha(a,i)$, $\alpha(a',i')>\eta$, $a-a' = x$ and either $i-i' = y$ or $i - i' + 1 = y$. Then
\[ \sum_{x,y} \tilde \alpha(x,y) \geq 4 \sum_{a,i} \alpha(a,i) - 4\eta qM.\]
\end{lemma}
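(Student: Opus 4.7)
The plan is to deduce this from Lemma \ref{doubling-4} by truncating away the small values of $\alpha$ before applying the non-robust version, and then bounding the loss.

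Concretely, define a modified function $\alpha' : \Z/q\Z\times\{1,\dots,M\}\to[0,1]$ by
\[ \alpha'(a,i) = \begin{cases} \alpha(a,i) & \text{if } \alpha(a,i) > \eta,\\ 0 & \text{otherwise.} \end{cases}\]
For each pair $(a,i)$ we have $\alpha(a,i) - \alpha'(a,i)\leq\eta$ (it equals $\alpha(a,i)\leq\eta$ in the truncated case and $0$ otherwise), and summing over the $qM$ pairs gives $\sum_{a,i}\alpha'(a,i)\geq\sum_{a,i}\alpha(a,i) - \eta qM$.

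Now apply Lemma \ref{doubling-4} to $\alpha'$. The quantity $\widetilde{\alpha'}(x,y)$ produced by the lemma is the maximum of $\alpha'(a,i) + \alpha'(a',i')$ over $(a,i),(a',i')$ with $\alpha'(a,i),\alpha'(a',i')>0$, subject to the same constraints on $a-a'$ and $i-i'$. By construction, these are precisely the pairs with $\alpha(a,i),\alpha(a',i')>\eta$, and on such pairs $\alpha'$ agrees with $\alpha$, so $\widetilde{\alpha'}(x,y) = \tilde\alpha(x,y)$ with $\tilde\alpha$ as defined in the statement of Lemma \ref{doubling-4-witheps}. Therefore
\[ \sum_{x,y}\tilde\alpha(x,y) = \sum_{x,y}\widetilde{\alpha'}(x,y) \geq 4\sum_{a,i}\alpha'(a,i) \geq 4\sum_{a,i}\alpha(a,i) - 4\eta qM,\]
which is the desired inequality. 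There is no real obstacle here — this is a routine truncation argument — and the only thing worth double-checking is that the convention on the max in $\tilde\alpha$ (over a possibly empty set) causes no issue, which is handled by the fact that an empty max contributes $0$ just as $\widetilde{\alpha'}(x,y)$ does when no valid pair exists.
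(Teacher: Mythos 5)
Your proof is correct and is essentially identical to the paper's: both truncate $\alpha$ below the threshold $\eta$ to get a function (called $\alpha^\dag$ in the paper, $\alpha'$ in yours), observe that the corresponding $\tilde\alpha$ is unchanged, and apply Lemma \ref{doubling-4} to the truncated function, losing at most $\eta qM$ in the sum, hence $4\eta qM$ after multiplying by $4$.
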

\begin{proof}
Let \[\alpha^\dag(a,i) = \begin{cases}\alpha(a,i) & \text{if $\alpha(a,i)>\eta$,}\\ 0 &\text{otherwise.}\end{cases}\] Then if we define, as in Lemma \ref{doubling-4}, $\tilde\alpha^\dag(x,y) = \max(\alpha^\dag(a,i) + \alpha^\dag(a',i'))$, where the maximum is taken over all $(a,i), (a',i')$ such that $\alpha^{\dag}(a,i), \alpha^{\dag}(a',i') > 0$, $a - a' = x$ and either $i - i' = y$ or $i - i' + 1 = y$, then $\tilde\alpha^\dag = \tilde\alpha$ as defined above. It follows then from Lemma \ref{doubling-4} that \[\sum_{x,y} \tilde\alpha(x,y) = \sum_{x,y} \tilde\alpha^\dag(x,y) \geq 4\sum_{a,i} \alpha^\dag(a,i) \geq 4\sum_{a,i}\alpha(a,i) - 4\eta qM.\qedhere\]
\end{proof}

Now we are ready to put everything together and complete the proof of Theorem \ref{discthm}. Let $\delta = {\tilde\eps}^{20}/10M^2$. Then certainly $\delta \gg_{\eps} 1$. Recall that $\alpha(a,i)$ is the density of $A$ on $I_{a,i}$. Define \[\alpha'(a,i) = \begin{cases} \alpha(a,i) & \text{if $(a,i)\notin E$,}\\ 0 & \text{if $(a,i)\in E$.}\end{cases}\] Then Lemma \ref{lem0.6} may be rephrased as follows: if $\alpha'(a,i), \alpha'(a',i') \geq 2{\tilde\eps}^2$ then \[ |\Sym_{\delta}(A) \cap I_{a - a', i - i'}| \geq \frac{N}{qM} \min (\alpha'(a,i) + \alpha'(a',i'), 1) - \frac{10{\tilde\eps}^2 N}{qM},\] with the same bound for $|\Sym_{\delta}(A) \cap I_{a - a', i - i' + 1}|$. It follows that  \[ |\Sym_{\delta}(A)| \geq \frac{N}{qM} \sum_{x,y} \min(\tilde{\alpha}'(x,y),1) - 10{\tilde\eps}^2 N,\] where $\tilde{\alpha}'$ is as defined from $\alpha'$ as in Lemma \ref{doubling-4-witheps} with $\eta = 2{\tilde\eps}^2$. Recalling that $\tilde\eps = \min(\eps,\frac{1}{1000})$, this implies \[|\Sym_{\delta}(A)| \geq \frac{N}{qM} \sum_{x,y} \min(\tilde{\alpha}'(x,y),1 + \tfrac{2}{5}{\eps}) - \tfrac{9}{10}{\eps} N.\] Supposing that $\tilde\alpha'(x,y)<1+\tfrac{2}{5}{\eps}$ for all $(x,y)$, Lemma \ref{doubling-4-witheps} implies
\[ |\Sym_\delta(A)| > \frac{4N}{qM} \sum_{a,i} \alpha'(a,i) - \tfrac{99}{100}{\eps} N > \frac{4N}{qM} \sum_{a,i} \alpha(a,i) - \tfrac{999}{1000} \eps N > 4|A| - \eps N.\] Thus if $|\Sym_\delta(A)|\leq 4|A|-{\eps} N$, there must be some $(x,y)$ such that $\tilde\alpha'(x,y)\geq 1 + \tfrac{2}{5}{\eps}$, whence for some $(a,i)$ we must have $\alpha(a,i)\geq\tfrac{1}{2} + \tfrac{1}{5}{\eps}$. This completes the proof of Theorem \ref{discthm}.

\section{Construction of the weight function}

\label{sec5}

In this section we prove Proposition \ref{nu-exist-3} by constructing an appropriate weight function $w$. The reader may wish to take this opportunity to recall the statement of that result. A key ingredient in the proof is the following corollary of the results of the Section \ref{sec4}. It states that an ``almost sum-free'' open subset of $\Z/q\Z \times [0,1]$ with density larger than $\frac{1}{3}$ must ``avoid the origin''. Recall that $\mu$ is the natural probability measure on $\Z/q\Z \times [0,1]$, namely the product of the uniform measure on $\Z/q\Z$ and the Lebesgue measure. 

\begin{corollary}\label{cor4.2}
Let $\eps,\eta > 0$ and $q \in \N$. Suppose that $A\subset\Z/q\Z\times[0,1]$ is an open set with $\mu(A)\geq\frac{1}{3} + \eps$ and $T(A)\leq\eta$. Then $\mu(A\cap (H\times I)) \ll_{\eps} \eta$ for some subgroup $H\leq\Z/q\Z$ of index $\ll_{\eps} 1$ and some interval $I$ around $0$ of length $\gg_{\eps} 1$.
\end{corollary}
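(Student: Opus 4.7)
The plan is to apply Corollary \ref{thm:stronger-thmconj} directly to $A$. The hypothesis $\mu(\Sym_{\delta}(A)) \leq 4\mu(A) - \eps_0$ is automatic once $\mu(A) \geq \tfrac{1}{3} + \eps$: indeed $\mu(\Sym_{\delta}(A)) \leq 1$ trivially, so one may take $\eps_0 := 4\mu(A) - 1 \geq \tfrac{1}{3} + 4\eps$, which is bounded below by a universal constant, and the $\delta$ provided by the corollary for this $\eps_0$ is universally bounded below as well. Thus Corollary \ref{thm:stronger-thmconj} furnishes a subgroup $H \leq \Z/q\Z$ of universally bounded index, an element $x_0 \in \Z/q\Z$, and an interval $J \subset [0,1]$ of universally bounded length, such that $A$ has density at least $\tfrac{1}{2} + c_0$ on $(x_0 + H) \times J$ for some universal constant $c_0 > 0$. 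Set $D = A \cap ((x_0 + H) \times J)$.

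Next, I would rewrite $T(A)$ in a form that brings in correlations of $A$ with its own shifts: the change of variables $y = x + x'$ in the definition yields
\[
T(A) = \int 1_A(x)\, (1_A \ast 1_{-A})(x)\, d\mu(x).
\]
Combined with the pointwise monotonicity $1_A \ast 1_{-A} \geq 1_D \ast 1_{-D}$ (from $D \subseteq A$), we obtain, for every subset $E \subseteq \Z/q\Z \times [0,1]$,
\[
\eta \geq T(A) \geq \int_{A \cap E} 1_D \ast 1_{-D}(x)\, d\mu(x).
\]
Corollary \ref{cor4.2} will follow upon taking $E = H \times I$ with $I = [0, c)$ an interval of length $c \gg_\eps 1$ around the origin, provided we can prove the uniform pointwise lower bound $1_D \ast 1_{-D}(x) \gg_\eps 1$ on this set.

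For this key estimate I would fix $x = (x_1, x_2) \in H \times [0, c)$ and expand $1_D \ast 1_{-D}(x) = \mu(D \cap (D + x))$. Since $x_1 \in H$, the shift $D + x$ stays in the coset $(x_0 + H) \times (J + x_2)$, parallel to $D$ in the first coordinate. In the overlap region $R := (x_0 + H) \times (J \cap (J + x_2))$, whose measure differs from $\mu((x_0 + H) \times J)$ by $O(x_2)$, both $D$ and $D + x$ retain density at least $\tfrac{1}{2} + c_0 - O(x_2/|J|)$. Choosing $c$ small enough compared to the (universal) length $|J|$, inclusion-exclusion inside $R$ forces $D \cap (D + x)$ to have density at least $c_0$ throughout $R$, and therefore $\mu(D \cap (D + x)) \geq c_0 \, \mu(R) \gg_\eps 1$.

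The main technical point requiring care is precisely the preceding density-above-$\tfrac{1}{2}$ step: the shift by $x$ loses a boundary strip of width $x_2$ from $J$ on one side (and pushes $D + x$ partly past the right end of $[0,1]$ on the other), so one must verify that the slack $c_0$ in the density bound afforded by Corollary \ref{thm:stronger-thmconj} survives these boundary losses uniformly for all $x \in H \times I$ with $I$ of length $\gg_\eps 1$. Once this is in hand, combining the pieces gives $\mu(A \cap (H \times I)) \leq T(A)/c_1 \ll_\eps \eta$, which is Corollary \ref{cor4.2}.
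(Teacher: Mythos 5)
Your first step contains a genuine gap. You assert that $\mu(\Sym_{\delta}(A)) \leq 1$ holds ``trivially'', but this is false: $\Sym_{\delta}(A) \subset A - A$ lives in $\Z/q\Z \times (-1,1)$, which has $\mu$-measure $2$, not $1$. (If $A = \Z/q\Z \times [0,1]$, for instance, then $1_A \ast 1_{-A}(h,t) = 1 - |t|$, so $\mu(\Sym_\delta(A)) \to 2$ as $\delta \to 0$.) With the correct bound $\mu(\Sym_\delta(A)) \leq 2$, the inequality $\mu(\Sym_\delta(A)) \leq 4\mu(A) - \eps_0$ for some $\eps_0 > 0$ would require $\mu(A) > \tfrac{1}{2}$, which is not guaranteed by $\mu(A) \geq \tfrac{1}{3} + \eps$. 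So the hypothesis of Corollary \ref{thm:stronger-thmconj} is not automatic, and a case analysis is genuinely needed; you cannot simply apply that corollary outright.

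The paper handles the excluded case $\mu(\Sym_\delta(A)) > 4\mu(A) - \eps$ separately: since $\Sym_\delta(A)$ is symmetric about $0$, its positive half $\Sym_\delta(A)_+ := \Sym_\delta(A) \cap (\Z/q\Z \times [0,1])$ has measure $> 2\mu(A) - \tfrac{1}{2}\eps$, so $\mu(A) + \mu(\Sym_\delta(A)_+) > 3\mu(A) - \tfrac{1}{2}\eps > 1 + 2\eps$; since both sets sit in a space of total mass $1$, pigeonhole gives $\mu(A \cap \Sym_\delta(A)_+) > 2\eps$, whence $T(A) = \int_A 1_A \ast 1_{-A}\, d\mu \geq 2\eps\delta \gg_\eps 1$, contradicting $T(A) \leq \eta$ for $\eta$ small (and the conclusion is trivial for $\eta$ large). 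With that case supplied, the remainder of your argument---deducing a uniform lower bound $\mu(A \cap (A + (h,t))) \gg_\eps 1$ for $(h,t)$ near the origin in $H \times [0,1]$ from the density-above-$\tfrac{1}{2}$ structure, then using $T(A) = \int_A 1_A \ast 1_{-A}\, d\mu$ restricted to $A \cap (H \times I)$---is essentially the paper's, differing only in whether one runs the inclusion--exclusion inside the overlap $J \cap (J + t)$ (as you do) or inside the slightly enlarged interval $J + [0,\eps']$ (as the paper does); the latter is marginally cleaner since the ambient set does not depend on $t$.
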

\begin{proof} We may assume that $\eta \leq \eta_0(\eps)$, for some $\eta_0(\eps)$ to be specified later. If not, the corollary is trivial by taking $H = \Z/q\Z$ and $I = [0,1]$. Let $\delta\gg_\eps 1$ be as in Corollary \ref{thm:stronger-thmconj}. Recall that $\Sym_{\delta}(A) = \{ x: 1_A\ast1_{-A}(x)\geq\delta\}$ and first suppose that $\mu(\Sym_{\delta}(A))> 4\mu(A)-\eps$. Write $\Sym_{\delta}(A)_+ = \Sym_{\delta}(A) \cap [0,1]$. Since $\Sym_{\delta}(A)$ is symmetric about $0$, we have $\mu(\Sym_{\delta}(A)_+) >  2\mu(A) - \frac{1}{2}\eps$. It follows that 
$\mu(A) + \mu(\Sym_{\delta}(A)_+) > 3 \mu(A) - \frac{1}{2}\eps > 1 +2\eps$, and so by the pigeonhole principle $\mu(A \cap \Sym_{\delta}(A)_+) > 2\eps$. This implies that $T(A) \geq 2\eps \delta \gg_{\eps} 1$. If $\eta_0(\eps)$ is small enough then this is more than $\eta$, and the corollary is established in this case.

The other possibility is that $\mu(\Sym_{\delta}(A)) \leq 4\mu(A) - \eps$. In this case, by Theorem \ref{thm:stronger-thmconj} there is a subgroup $H \leq \Z/q\Z$ of index $m \ll_\eps 1$ and an interval $I \subset [0,1]$ of length $\ell \gg_{\eps} 1$ such that $A$ has density at least $\frac{1}{2} + \tfrac{1}{7}\eps$ on $(x + H) \times I$, for some $x \in \Z/q\Z$. Let $\eps' = \tfrac{1}{7}\eps \ell$, and suppose that $(h, t) \in H \times [0,\eps']$. Then both $A \cap ((x + H) \times I)$ and $\big( A \cap ((x + H) \times I)\big) + (h, t)$ lie in $(x + H) \times I'$, where $I' = I + [0,\eps']$. Noting that $\mu(H \times I) = \ell/m$ and $\mu(H \times I') = (\ell + \eps')/m$, we have \[\mu(A \cap (A + (h, t)))  \geq 2\, \left(\tfrac{1}{2} + \tfrac{1}{7}\eps\right) \mu(H \times I) - \mu(H \times I') \geq \frac{\eps'}{m}.\] It follows that $T(A) \geq \mu(A \cap (H \times [0,\eps'])) \frac{\eps'}{m}$. Since $T(A) \leq \eta$, this implies that $\mu(A \cap (H \times [0,\eps'])) \leq \eta m/\eps' \ll_{\eps} \eta$, and we have proved the corollary in this case too.
\end{proof}

Using the above corollary, we can construct a weight function on $\Z/Q\Z\times[0,1]$ for some $Q\ll_\eps 1$, packing most of its weight near $0$ in a certain sense, and prove that it satisfies Proposition \ref{nu-exist-3}. Our iterative strategy\footnote{This strategy was suggested to us by the proof of the contraction mapping theorem.} is embodied in the following lemma.

\begin{lemma}\label{weight-lem} Let $\eps>0$ and suppose that $\alpha\geq\frac{1}{3} + \frac{1}{8}\eps$. Suppose we are given a Lipschitz weight function $w:\Z/Q\Z\times[0,1]\to(0,\infty)$ and $\eta>0$ such that if $Q\mid q$ and $A\subset \Z/q\Z\times[0,1]$ is an open set such that $T(A)\leq\eta$ then \[\int_A w\,d\mu\leq\alpha.\] Then for some $Q'$ there is a Lipschitz weight function $w':\Z/Q'\Z\times[0,1]\to(0,\infty)$ and $\eta'>0$ such that if $Q'\mid q'$ and $A\subset\Z/q'\Z\times[0,1]$ is an open set such that $T(A)\leq\eta'$ then \[\int_A w'\,d\mu\leq\alpha',\] where $\alpha'=\tfrac{3}{4}\alpha + \tfrac{1}{4}\left(\tfrac{1}{3} + \tfrac{1}{8}\eps\right)$.
\end{lemma}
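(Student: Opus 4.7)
The plan is to construct $w'$ as a convex combination of (the lift of) $w$ and a new Lipschitz weight function $w_*$ concentrated near the origin, then to verify the bound by splitting into cases on $\mu(A)$.

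First I would fix $\eps' = \tfrac{1}{8}\eps$, so that $\alpha^* := \tfrac{1}{3}+\eps'$ is precisely the threshold in Corollary~\ref{cor4.2}. Applying that corollary with parameter $\eps'$ produces constants $M_0 = M_0(\eps)$ (bounding the index of the subgroup it yields) and $\ell_0 = \ell_0(\eps)$ (bounding the interval length from below). I would then set $Q' = Q \cdot M_0!$, so that $H_* := M_0!\Z/Q'\Z$ is a subgroup of $\Z/Q'\Z$ contained in \emph{every} subgroup of index $\leq M_0$; similarly, $I_0 := [0,\ell_0]$ is contained in every interval $I \subset [0,1]$ around $0$ of length $\geq \ell_0$. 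Thus $H_* \times I_0$ is contained in every $H \times I$ that Corollary~\ref{cor4.2} can produce. Next I would choose $w_* : \Z/Q'\Z \times [0,1] \to (0,\infty)$ to be a Lipschitz weight function with most of its mass on $H_* \times I_0$ --- concretely, of the form $w_* = c_1 \chi + c_2$, where $\chi$ is a Lipschitz approximation to $1_{H_* \times I_0}$ and $c_1, c_2 > 0$ are chosen so that $\int w_* = 1$ (with $c_2 > 0$ ensuring strict positivity). Letting $w_Q$ denote the lift $w_Q(x,y) = w(x \bmod Q, y)$ of $w$ to $\Z/Q'\Z \times [0,1]$, I would define $w' = (1-\lambda) w_Q + \lambda w_*$ for a parameter $\lambda \in (0,1)$, and take $\eta'$ sufficiently small in terms of $\eps$, $\lambda$, $c_1$, $\|w\|_\infty$, and $\eta$.

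To verify $\int_A w' \leq \alpha'$ for open $A \subset \Z/q'\Z\times[0,1]$ with $Q' \mid q'$ and $T(A) \leq \eta'$, I would split based on $\mu(A)$. If $\mu(A) \geq \alpha^*$, Corollary~\ref{cor4.2} with parameter $\eps'$ produces $H \supseteq H_*$ and $I \supseteq I_0$ with $\mu(A \cap H \times I) \ll_\eps \eta'$, hence also $\mu(A \cap H_* \times I_0) \ll_\eps \eta'$; so $\int_A w_* \ll_\eps \eta' + c_2$, and combining with $\int_A w_Q \leq \alpha$ (valid because $\eta' \leq \eta$) one gets $\int_A w' \leq (1-\lambda)\alpha + O_\eps(\eta' + c_2) \leq \alpha'$ for $\lambda \geq (\alpha-\alpha^*)/(4\alpha)$ and $\eta', c_2$ small enough. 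If instead $\mu(A) < \alpha^*$, the trivial bound $\int_A w' \leq \|w'\|_\infty \mu(A) < \|w'\|_\infty \alpha^*$ must deliver $\leq \alpha'$, which forces $\|w'\|_\infty \leq \alpha'/\alpha^*$ and thereby constrains the peak height of $w_*$.

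The hard part will be reconciling these two cases: the Case~1 improvement asks $w_*$ to concentrate sharply on the tiny set $H_* \times I_0$ of measure $\ell_0/M_0! \ll_\eps 1$, whereas Case~2 caps $\|w'\|_\infty$ at $\alpha'/\alpha^*$, a value only slightly above $1$ when $\alpha$ is close to $\alpha^*$. Calibrating $c_1$, $c_2$, and $\lambda$ against $\alpha$ and $\eps$ to satisfy both regimes simultaneously is delicate; in the tightest regime $\alpha \to \alpha^*$ I expect one must refine Case~2 by further subdividing on how concentrated $A$ is on $H_* \times I_0$, in the problematic subcase $\mu(A \cap H_* \times I_0) \geq \alpha^* \mu(H_* \times I_0)$ applying Corollary~\ref{cor4.2} at the rescaled level to $\tilde A := A \cap (H_* \times I_0)$ identified with a subset of $\Z/(q'/M_0!)\Z \times [0,1]$, using that $T(\tilde A) \ll_\eps T(A) \leq \eta'$ is small.
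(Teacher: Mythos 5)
Your outline — a concentrated bump near the origin, a convex combination, and a case split on $\mu(A)$ — has the right general shape, but the mechanism for Case~2 ($\mu(A) < \alpha^* := \tfrac{1}{3}+\tfrac{1}{8}\eps$) does not work, and this is the half that makes the iteration close. Your Case~2 relies on the trivial bound $\int_A w'\,d\mu \leq \|w'\|_\infty\,\mu(A)$, which requires $\|w'\|_\infty \leq \alpha'/\alpha^*$, an absolute constant. But since $w_*$ is a probability density with almost all of its mass on $H_*\times I_0$ of measure $\mu_0 = \ell_0/M_0! \ll_\eps 1$, one has $\|w_*\|_\infty \gg 1/\mu_0$, while Case~1 forces $\lambda$ to be bounded below: you need $\lambda(\alpha - \int_A w_*) \geq \tfrac{1}{4}(\alpha - \alpha^*)$, which at the first step $\alpha = 1$ already gives $\lambda \gtrsim \tfrac{1}{6}$. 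Since $w_Q > 0$, this yields $\|w'\|_\infty \geq \lambda\|w_*\|_\infty \gg 1/\mu_0$, and $\mu_0$ is whatever Corollary~\ref{cor4.2} gives you — there is no reason for it to be as large as a fixed multiple of $\alpha^*/\alpha'$. So the two regimes are already incompatible on the first pass of the iteration, not merely in the limit $\alpha \to \alpha^*$.

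Your closing suggestion — to apply something ``at the rescaled level'' to $\tilde A = A \cap (H_*\times I_0)$ using that $T(\tilde A)$ is small — is pointing at the right idea, but the tool you need there is the \emph{inductive hypothesis on $w$}, not Corollary~\ref{cor4.2}, and for that hypothesis to control $\int_{\tilde A} w_*\,d\mu$ the bump $w_*$ must itself be a rescaled copy of $w$. This is exactly what the paper does: instead of the lift $w_Q$, it pushes the measure $\nu(B) = \int_B w\,d\mu$ forward under the contraction $\pi_t(x,y) = (Mx, t\eps' y)$ into a small box near the origin, and defines $w'_t$ so that $\nu'_t = \tfrac{3}{4}(\pi_t)_*\nu + \tfrac{1}{4}\mu$. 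Then $\int_A w'_t\,d\mu = \tfrac{3}{4}\nu(\pi_t^{-1}(A)) + \tfrac{1}{4}\mu(A)$, and in Case~2 the first term is $\leq \tfrac{3}{4}\alpha$ directly from the hypothesis applied to the blown-up set $\pi_t^{-1}(A)$ (valid since $T(\pi_t^{-1}(A)) \leq \eta$ once $\eta' \leq \mu(\im\pi_{1/2})^3\eta$), while the second is $\leq \tfrac{1}{4}\alpha^*$, giving exactly $\alpha'$ with no $L^\infty$ bound on $w'$ and no constraint on $\mu_0$. The averaging over $t\in[\tfrac12,1]$ is only to restore the Lipschitz property. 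In short: using the push-forward of $w$ rather than the lift, and applying the inductive hypothesis to $\pi_t^{-1}(A)$ rather than to $A$, is the essential mechanism your proposal is missing.
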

\begin{proof}
Apply Corollary \ref{cor4.2} with $\frac{1}{8}\eps$ replacing $\eps$. Thus if $A \subset \Z/q\Z \times [0,1]$ is open, $\mu(A) \geq \frac{1}{3} + \frac{1}{8}\eps$ and $T(A) \leq \eta'$ then $\mu(A \cap (H \times [0,\eps'])) \ll_{\eps} \eta'$, where $H \leq \Z/Q\Z$ is a subgroup of index at most $C_{\eps}$ and $\eps'\gg_\eps 1$. Let $M = C_{\eps}!$. Then the index $[\Z/q\Z : H]$ necessarily divides $M$, so for every $x \in \Z/q\Z$ we have $Mx \in H$.

For $t \in \left[\frac{1}{2},1\right]$ and $q\in\N$ define $\pi_t : \Z/q\Z \times [0,1] \rightarrow \Z/Mq\Z \times [0,1]$ by $\pi_t(x,y) = (Mx, t\eps' y)$. Then, by the above, if $A \subset \Z/Mq\Z \times [0,1]$ is open, $\mu(A) \geq \frac{1}{3} + \frac{1}{8}\eps$ and $T(A) \leq \eta'$ then $\mu(A \cap \im\pi_t)\ll_{\eps} \eta'$.

Let $Q' = MQ$ and define $w_t'$ on $\Z/Q'\Z\times[0,1]$ by
\begin{equation}\label{legendary-wt}
  w_t'(x) = \tfrac{3}{4} 1_{\im\pi_t}(x) \frac{w(\pi_t^{-1}(x))}{\mu(\im\pi_t)} + \tfrac{1}{4}.
\end{equation}
This definition can be made to look a little more natual as follows. Define measures $\nu$ on $\Z/Q\Z\times[0,1]$ and $\nu_t'$ on $\Z/Q'\Z \times [0,1]$ by $\nu(A) = \int 1_A w\,d\mu$ and $\nu_t'(A) = \int 1_A w_t'\,d\mu$. Then the relationship between $\nu$ and $\nu'_t$ is $\nu_t' = \frac{3}{4} (\pi_t)_* \nu + \frac{1}{4} \mu$, where the push-forward measure is defined as usual by $\pi_* \nu(A) = \nu(\pi^{-1} (A))$.

Now suppose $Q'\mid q'$, say $q' = Mq$ where $Q\mid q$, and $A\subset\Z/q'\Z\times[0,1]$ is open and $T(A)\leq\eta'$. If $\mu(A)\geq\frac{1}{3}+\frac{1}{8}\eps$ then, as noted above, $\mu(A \cap \im \pi_t) \ll_{\eps} \eta'$. Therefore \[ (\pi_t)_* \nu(A) = \nu(\pi_t^{-1}(A))  \leq \|w\|_{\infty} \mu(\pi_t^{-1}(A))  = \|w\|_\infty \frac{\mu(A \cap \im \pi_t)}{\mu(\im \pi_t)} \ll_{\eps} \eta' \| w \|_{\infty},\] so $\nu'_t(A) \leq O_{\eps}(\eta' \|w\|_\infty) + \frac{1}{4}$, and this can be made to be less than $\frac{1}{3}$ by taking $\eta'$ sufficiently small depending on $\eps$ and $\|w\|_\infty$.

Suppose instead $\mu(A)\leq\frac{1}{3}+\frac{1}{8}\eps$. If $\eta'\leq\mu(\im \pi_{1/2})^3\eta$ then we have $T(\pi_t^{-1}(A)) \leq \eta$ for all $t \in [\frac{1}{2}, 1]$, so in this case we have \[ \nu'_t(A) = \tfrac{3}{4} \nu(\pi_t^{-1}(A)) + \tfrac{1}{4}\mu(A) \leq \tfrac{3}{4}\alpha + \tfrac{1}{4}\left(\tfrac{1}{3} + \tfrac{1}{8}\eps\right) = \alpha'.\]

To complete the proof, we must show that $w'$ can be chosen to be Lipschitz. In fact $w_t'$ generally has a jump discontinuity\footnote{This would not, in actual fact, be a fatal hole in our argument; in Section \ref{sec3} we could instead have dealt with the larger class of piecewise Lipschitz functions. This is a little complicated, however, and the Lipschitz hypothesis is convenient.} at every point of the form $(Mk,t\eps')$, but we can remedy this by defining \begin{equation}\label{w-smooth} w' = 2\int_\frac{1}{2}^1 w'_t\,dt.\end{equation} Then if $Q'\mid q'$, $A\subset\Z/q'\Z\times[0,1]$ is open and $T(A)\leq\eta'$ then \[\int_A w'\,d\mu = 2\int_\frac{1}{2}^1 \nu'_t(A)\,dt \leq 2\int_\frac{1}{2}^1 \alpha'\,dt = \alpha',\]
while from \eqref{legendary-wt} and \eqref{w-smooth} it is fairly clear that $w'$ is Lipschitz.
\end{proof}

By applying Lemma \ref{weight-lem} iteratively we get a weak version of Proposition \ref{nu-exist-3}.

\begin{proposition}\label{nu-exist-weak}
Let $\eps > 0$. Then there is an integer $Q$ and a Lipschitz weight function $w : \Z/Q\Z \times [0,1] \to (0,\infty)$ with the following property. If $Q\mid q$, then for any open set $A\subset\Z/q\Z\times[0,1]$ such that $\int 1_A w\,d\mu\geq\tfrac{1}{3}+\eps$ we have $T(A)\gg_\eps 1$.
\end{proposition}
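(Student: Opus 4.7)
The plan is to iterate Lemma \ref{weight-lem} starting from a trivial weight function, following the contraction-mapping heuristic alluded to in that lemma's footnote. The recursion $\alpha \mapsto \tfrac{3}{4}\alpha + \tfrac{1}{4}(\tfrac{1}{3} + \tfrac{1}{8}\eps)$ is a contraction with factor $\tfrac{3}{4}$ and fixed point $\alpha^* = \tfrac{1}{3} + \tfrac{1}{8}\eps$, which lies strictly below $\tfrac{1}{3} + \eps$; hence finitely many iterations suffice to drive the threshold below $\tfrac{1}{3} + \eps$.

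For the base case, take $w_0 \equiv 1$ on $\Z/Q_0\Z \times [0,1]$ with $Q_0 = 1$, together with $\alpha_0 = 1$ and any $\eta_0 > 0$. The hypothesis of Lemma \ref{weight-lem} is vacuously satisfied: for any open $A$ one has $\int_A w_0\,d\mu = \mu(A) \leq 1 = \alpha_0$ regardless of $T(A)$, while $w_0$ is trivially a Lipschitz weight function and $\alpha_0 \geq \tfrac{1}{3} + \tfrac{1}{8}\eps$. Applying Lemma \ref{weight-lem} repeatedly produces sequences $(Q_k, w_k, \eta_k, \alpha_k)$, with each $w_k$ a Lipschitz weight function on $\Z/Q_k\Z \times [0,1]$ and with $\alpha_{k+1} = \tfrac{3}{4}\alpha_k + \tfrac{1}{4}(\tfrac{1}{3} + \tfrac{1}{8}\eps)$. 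The iteration is legal at every step because $\alpha_k > \alpha^* = \tfrac{1}{3} + \tfrac{1}{8}\eps$ is preserved throughout, so the required lower bound on $\alpha_k$ continues to hold.

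Solving the recursion gives $\alpha_k - \alpha^* = (\tfrac{3}{4})^k(\alpha_0 - \alpha^*) \leq (\tfrac{3}{4})^k$, so for $k = k(\eps) = O(\log(1/\eps))$ sufficiently large we have $\alpha_k \leq \tfrac{1}{3} + \tfrac{1}{2}\eps$. I would then take $Q = Q_k$, $w = w_k$, and $\eta = \eta_k$; since $k$ depends only on $\eps$ and every iteration step depends only on $\eps$ and on the data from the previous step, all of these depend only on $\eps$. If $Q \mid q$ and $A \subset \Z/q\Z \times [0,1]$ is open with $\int_A w\,d\mu \geq \tfrac{1}{3} + \eps > \alpha_k$, then the contrapositive of the defining property of $(w_k, \eta_k)$ forces $T(A) > \eta_k \gg_\eps 1$, which is the desired conclusion.

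There is no real obstacle here: all of the analytic content has already been absorbed into Lemma \ref{weight-lem} (which itself rests on Corollary \ref{cor4.2} and the work of Section \ref{sec4}). The remaining task is purely the bookkeeping of a geometric contraction together with the observation that Lipschitz-ness, the weight-function normalisation $\int w = 1$, and the lower bound $\alpha_k \geq \tfrac{1}{3} + \tfrac{1}{8}\eps$ are all inherited at each step.
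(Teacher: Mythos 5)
Your proof is correct and essentially identical to the paper's: both start the iteration from $Q_0 = 1$, $w_0 \equiv 1$, $\alpha_0 = 1$, iterate Lemma~\ref{weight-lem} for $O(\log(1/\eps))$ steps, and use the contraction of $\alpha \mapsto \tfrac{3}{4}\alpha + \tfrac{1}{4}(\tfrac{1}{3}+\tfrac{1}{8}\eps)$ toward the fixed point $\tfrac{1}{3}+\tfrac{1}{8}\eps < \tfrac{1}{3}+\eps$, then conclude by the contrapositive of the lemma's guarantee. The only differences are cosmetic (you phrase the geometric decay via the fixed point rather than summing the series explicitly, and you stop at threshold $\tfrac{1}{3}+\tfrac{1}{2}\eps$ rather than $\tfrac{1}{3}+\tfrac{1}{4}\eps$), and your observation that $\alpha_k > \alpha^*$ is preserved so the lemma's hypothesis $\alpha \geq \tfrac{1}{3}+\tfrac{1}{8}\eps$ stays valid is exactly the right justification.
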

\begin{proof} Apply Lemma \ref{weight-lem} iteratively starting with $Q=1$, $w \equiv 1$, $\alpha = 1$, and $\eta = 1$. After $n = 100\log(1/\eps)$ steps we obtain an integer $Q$ and a weight $w$ on $\Z/Q\Z \times [0,1]$ satisfying the hypotheses of that lemma with some $\eta>0$ and \[ \alpha = \left(\tfrac{3}{4}\right)^n + \tfrac{1}{4}\left(1 + \tfrac{3}{4} + \left(\tfrac{3}{4}\right)^2 + \dots + \left(\tfrac{3}{4}\right)^{n-1} \right) \left(\tfrac{1}{3} + \tfrac{1}{8}\eps\right) < \tfrac{1}{3} + \tfrac{1}{4}\eps.\qedhere\]
\end{proof}

To obtain Proposition \ref{nu-exist-3} from Proposition \ref{nu-exist-weak}, we must replace $1_A$ by an arbitrary continuous function $\Psi$, and we must introduce the additional factor of $\T = (\R/\Z)^d$. Both of these improvements turn out to be relatively straightforward.

\begin{proof}[Proof of Proposition \ref{nu-exist-3}]
We will show that $w$, the weight function on $\Z/Q\Z\times[0,1]$ constructed in Proposition \ref{nu-exist-weak}, has property required by Proposition \ref{nu-exist-3}. We do this in stages, beginning with the following.

\emph{Claim I}. Consider the ``discrete torus'' $\T_{\disc} = \Z/q_1 \Z \times \dots \times \Z/q_d \Z$, where $q_1,\dots, q_d > q$ are distinct primes. Suppose $A \subset \Z/q\Z \times [0,1] \times \T_{\disc}$ is open and $\int 1_A (w \times 1_{\T_{\disc}})d\mu \geq \frac{1}{3} + \frac{1}{4}\eps$. Then $T(A) \gg_{\eps} 1$. (Here, $\Z/Q\Z \times [0,1] \times \T_{\disc}$ is endowed with the uniform probability measure $\mu$.)

\emph{Proof of Claim I}. Let $q' = q q_1 \dots q_d$ and consider the $\mu$-preserving isomorphism \[\Z/q'\Z \times [0,1] \stackrel{\psi}{\longrightarrow} \Z/q\Z \times [0,1]  \times \T_{\disc}\] given by \[ \psi(x,y) = (x \md q, y, x\md{q_1}, \dots, x \md{q_d}).\] If $A\subset\Z/q\Z\times[0,1]\times\T_{\disc}$ is open and $\int 1_A (w \times 1_{\T_{\disc}})\,d\mu \geq \frac{1}{3} + \frac{1}{4}\eps$ then $\int 1_{\psi^{-1}(A)} w\,d\mu \geq \frac{1}{3} + \frac{1}{4}\eps$, so Proposition \ref{nu-exist-weak} implies $T(A) = T(\psi^{-1}(A))\gg_{\eps} 1$.

\emph{Claim II}. The same claim holds if $\T_{\disc}$ is replaced by the genuine torus $\T = (\R/\Z)^d$ and $\frac{1}{4}\eps$ is replaced with $\frac{1}{2}\eps$. That is, if $A \subset \Z/Q\Z\times [0,1] \times \T$ is open and $\int 1_A (w \times 1_{\T}) \geq \frac{1}{3} + \frac{1}{2}\eps$ then $T(A) \gg_{\eps} 1$.

\emph{Proof of Claim II}. This is a standard discretisation argument. We may find some $\kappa>0$ and a subset $A'\subset A$ such that $A'$ is a disjoint union of sets of the form $\{x\}\times I\times J$, where $I\subset[0,1]$ is an open interval of length $\kappa$ and $J\subset\T$ is an open cube of side-length $\kappa$, and $\int 1_{A'} (w\times 1_{\T}) \geq \frac{1}{3} + \frac{1}{3}\eps$. Regard $\T_{\disc}$ as a subgroup of $\T$ by mapping $(x_1,\dots, x_d) \in \T_{\disc}$ to $\left(\frac{x_1}{q_1}, \dots, \frac{x_d}{q_d}\right) \in \T$. Set \[A'_{\disc} = A' \cap (\Z/Q\Z\times[0,1]\times\T_{\disc}).\] Then as $q_1,\dots,q_d\to\infty$ both $\int 1_{A'_{\disc}}(w\times1_{\T_{\disc}})\,d\mu \to\int 1_{A'} (w\times1_{\T})\,d\mu$ and $T(A'_{\disc})\to T(A')$, so by the previous claim $T(A)\geq T(A')\gg_\eps 1$.

Finally, we are ready to verify Proposition \ref{nu-exist-3} itself, which differs from Claim II only in the presence of a general continuous function $\Psi : \Z/q\Z \times [0,1] \times \T \rightarrow [0,1]$ in place of the characteristic function $1_A$. Suppose that $\Psi$ is given and $\int \Psi\cdot(w\times 1_{\T})\,d\mu > \frac{1}{3} + \eps$. Consider the open set $A = \left\{x: \Psi(x)>\frac{1}{2}\eps\right\} \subset\Z/q\Z\times[0,1]\times \T$. Since \[\tfrac{1}{3} + \eps  \leq \int \Psi \cdot (w\times 1_{\T}) = \int_A \Psi\cdot(w\times 1_{\T}) + \int_{A^c} \Psi\cdot(w\times 1_{\T}) \leq \int 1_A (w\times 1_{\T}) + \tfrac{1}{2}\eps,\] we have $\int 1_A (w\times 1_{\T}) \geq \frac{1}{3} + \frac{1}{2}\eps$. By Claim II we therefore have $T(A) \gg_{\eps} 1$, and thus $T(\Psi) \geq \left(\frac{1}{2}\eps\right)^3 T(A) \gg_{\eps} 1$. This (at last!) completes the proof of Proposition \ref{nu-exist-3} and hence of Theorem \ref{mainthm}.
\end{proof}

\section{More on sets of doubling less than $4$}
\label{sec6}

The purpose of this section is to expand just a little more on the results of Section \ref{sec4}, which may be of independent interest. The first theorem below is a direct consequence of Theorem \ref{discthm}; the second is the corresponding consequence of Corollary \ref{thm:stronger-thmconj} in the case $Q = 1$.

\begin{theorem}\label{int-thm}
If $A \subset \{1,\dots,N\}$ is a set such that $|A - A| \leq 4|A| - \eps N$ then there is an arithmetic progression $P \subset \{1,\dots,N\}$ of length $\gg_{\eps} N$ on which $A$ has density at least $\frac{1}{2} + \tfrac{1}{5}\eps$.
\end{theorem}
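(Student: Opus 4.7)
The plan is very short: Theorem \ref{int-thm} follows immediately from Theorem \ref{discthm} by the trivial inclusion $\Sym_{\delta}(A) \subseteq A - A$ valid for any $\delta > 0$. Indeed, if $x \in \Sym_{\delta}(A)$ then $1_A \ast 1_{-A}(x) \geq \delta > 0$, and in particular $1_A \ast 1_{-A}(x) \neq 0$, so $x$ is a difference of two elements of $A$.

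First I would let $\eps > 0$ be given and invoke Theorem \ref{discthm} to produce the corresponding $\delta \gg_\eps 1$. Then, under the hypothesis $|A - A| \leq 4|A| - \eps N$, the inclusion above gives
\[
  |\Sym_{\delta}(A)| \leq |A - A| \leq 4|A| - \eps N,
\]
which is exactly the hypothesis of Theorem \ref{discthm}. Its conclusion therefore yields an arithmetic progression $P \subseteq \{1,\dots,N\}$ with $|P| \gg_\eps N$ and $|A \cap P| \geq (\tfrac{1}{2} + \tfrac{1}{5}\eps)|P|$, which is precisely what Theorem \ref{int-thm} asserts.

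There is no real obstacle here: all the work has already been done in proving Theorem \ref{discthm}, and the point of that theorem was precisely to establish the conclusion under the mildly weaker ``popular-difference'' hypothesis. The only remark worth making is that one gains no additional strength in the conclusion from the stronger hypothesis $|A - A| \leq 4|A| - \eps N$, so one might as well state and prove the sharper Theorem \ref{discthm} and derive Theorem \ref{int-thm} as a one-line corollary.
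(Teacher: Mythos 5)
Your proof is correct and matches the paper's intent exactly: the paper explicitly states Theorem \ref{int-thm} is ``a direct consequence of Theorem \ref{discthm},'' and the observation $\Sym_{\delta}(A) \subseteq A - A$ is precisely the one-line reduction needed. Nothing to add.
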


\begin{theorem}\label{real-thm}
If $A \subset [0,1]$ is an open set such that $|A - A| \leq 4|A| - \eps$ then there is an interval $I \subset [0,1]$ of length $\gg_{\eps} 1$ on which $A$ has density at least $\frac{1}{2} + \tfrac{1}{7}\eps$.
\end{theorem}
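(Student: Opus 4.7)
My plan is to deduce Theorem \ref{real-thm} directly from Corollary \ref{thm:stronger-thmconj} in the case $q = 1$. When $q = 1$ the group $\Z/q\Z$ is trivial, so $\Z/q\Z \times [0,1]$ is canonically identified with $[0,1]$, the measure $\mu$ becomes Lebesgue measure, and any subgroup $H \leq \Z/q\Z$ is trivial, so a coset-times-interval $(x + H) \times I$ reduces to $I \subset [0,1]$. Thus the conclusion of the corollary specialises exactly to the conclusion of Theorem \ref{real-thm}.

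The only check required is that the hypothesis $|A - A| \leq 4|A| - \eps$ implies the slightly weaker hypothesis $\mu(\Sym_\delta(A)) \leq 4\mu(A) - \eps$ used in Corollary \ref{thm:stronger-thmconj}. This is immediate for any $\delta > 0$: if $1_A \ast 1_{-A}(x) \geq \delta > 0$ then $x$ lies in the (topological) support of $1_A \ast 1_{-A}$, which is contained in the closure of $A - A$; since $A$ is open, $A - A$ is open and we obtain $\Sym_\delta(A) \subset A - A$, hence
\[
  \mu(\Sym_\delta(A)) \leq \mu(A - A) \leq 4\mu(A) - \eps.
\]
Applying Corollary \ref{thm:stronger-thmconj} with the $\delta \gg_\eps 1$ it supplies then produces an interval $I \subset [0,1]$ of length $\gg_\eps 1$ on which $A$ has density at least $\tfrac{1}{2} + \tfrac{1}{7}\eps$, which is exactly what is claimed.

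There is no real obstacle remaining: the substance lies entirely in Corollary \ref{thm:stronger-thmconj} and the underlying Theorem \ref{discthm}, both of which have already been proved using the arithmetic regularity lemma and the Brunn--Minkowski inequality. In this light Theorem \ref{real-thm} should be viewed as a clean continuous analogue of Theorem \ref{int-thm}, exhibiting the structural conclusion of Section \ref{sec4} in the simplest possible ambient setting.
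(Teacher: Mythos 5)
Your proof is correct and follows exactly the route the paper intends: Theorem \ref{real-thm} is obtained as the $q = 1$ case of Corollary \ref{thm:stronger-thmconj}, with the hypothesis transferring via the trivial inclusion $\Sym_\delta(A) \subset A - A$ for $\delta > 0$. One small remark: your justification of that inclusion via topological support and closure is unnecessarily involved, and the step from $x \in \overline{A-A}$ to $x \in A - A$ on the grounds that $A - A$ is open is not a valid inference (open sets need not be closed); the direct argument is that $1_A \ast 1_{-A}(x) > 0$ forces the existence of some $y$ with $y, y - x \in A$, hence $x \in A - A$, which is precisely the observation the paper records immediately after defining $\Sym_t$.
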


\emph{Remarks.}
\begin{enumerate}
	\item Neither the constant $\tfrac{1}{5}$ in Theorem \ref{int-thm} (or Theorem \ref{discthm}) nor the constant $\tfrac{1}{7}$ in Theorem \ref{real-thm} is optimal. Indeed, if one allows the implied constants to depend on $\eta$ then our proof can be modified to get $\tfrac{1}{4}-\eta$ for both these constants.
	\item A similar conclusion to Theorem \ref{real-thm} could be obtained if one instead had \emph{two} sets $A$ and $B$ satisfying $|A - B| < |A| + |B| + 2|A|^{1/2} |B|^{1/2} - \eps$. The conclusion would then be that there are intervals $I_A$ and $I_B$ such that the densities of $A, B$ on $I_A, I_B$ respectively sum to at least $1 + \tfrac{1}{3}\eps$ (or up to $1 + (\tfrac{1}{2}-\eta)\eps$, constants depending on $\eta$). There would be a similar generalisation of Theorem \ref{int-thm}. We leave the proof of these results as an exercise to the interested reader. One annoying additional complication would be the need to have an arithmetic regularity lemma valid for two sets simultaneously. While such a statement can be easily established by modifying the arguments of \cite{green-tao-arithregularity}, no such result currently appears in the literature.
	\item We are not aware of any reason that the length of $I$ or $P$ could not be bounded below by some quite reasonable function of $\eps$, but our argument does not give one. Much better quantitative results are available under the assumption that $|A - A| < 3|A|$: see \cite{ruzsa-measure}.
\end{enumerate}

Consider the discrete case $A\subset \{1,\dots,N\}$. Note that the hypothesis $|A-A|\leq 4|A| - \eps N$ implies $|A|\geq \tfrac{1}{4}\eps N$. Thus one can consider Theorem \ref{int-thm} to contain a ``hidden hypothesis'' to the effect that $A$ is somewhat dense in $\{1,\dots,N\}$. If instead one assumed only that $|A-A|\leq (4-\eps)|A|$, then our argument would give bounds depending on $\alpha = |A|/N$ as well as $\eps$. Using a ``Freiman modelling'' argument of a type pioneered by Ruzsa, however, we can overcome this. We first isolate a lemma due to Lev \cite{lev} (though earlier results of S\'ark\"ozy \cite{sarkozy} would also suffice). 

\begin{lemma}
Let $P$ be a finite arithmetic progression of even length greater than $12$, and let $X \subset P$ be a set with $|X| > \frac{1}{2}|P|$. Then $5X - 4X$ contains $P$. 
\end{lemma}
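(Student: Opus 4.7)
The plan is to reduce to the case $P = \{0, 1, \ldots, n-1\}$ with $n \geq 13$ via an affine change of variables, which preserves both hypothesis and conclusion. At the heart of the argument lies the sumset identity
\[ 5X - 4X = X + 4(X-X), \]
obtained by regrouping four of the positive and four of the negative copies of $X$ into four copies of the symmetric difference set $X - X$, leaving one copy of $X$ aside. Thus it suffices to show $X + 4(X-X) \supset P$.

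The crux is to produce an initial segment inside the difference set, namely $\{-k, -k+1, \dots, k\} \subset X - X$ for some useful $k \geq 1$. Since $|X| > n/2$, the standard lower bound $|X - X| \geq 2|X| - 1 > n - 1$, combined with the symmetry $X - X = -(X - X)$, shows that $X - X$ contains many positive elements inside $[1, n-1]$. A structural case split then promotes this to an initial segment: either $X$ is essentially itself a full arithmetic progression inside $P$, in which case the conclusion follows from a direct computation of the interval $5X - 4X$; or else $X$ is sufficiently ``spread out'' inside $P$ that pairs of nearby elements of $X$ force $\{-k, \dots, k\} \subset X - X$ with $k$ comparable to $n$. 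This dichotomy is the main technical content and is where Freiman-type rigidity results (such as the $3k-4$ theorem) are invoked.

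With $\{-k, \dots, k\} \subset X - X$ in hand one has $\{-4k, \dots, 4k\} \subset 4(X - X)$, so $X + 4(X - X)$ is the ``$4k$-neighbourhood'' of $X$ inside $\Z$. The density bound $|X| > n/2$ together with the length hypothesis $n > 12$ then guarantee that $X$ has no gap inside $P$ longer than $8k$, so every element of $P$ is within distance $4k$ of some element of $X$. This yields $X + 4(X - X) \supset P$ as required. The threshold $n > 12$ enters precisely at this balancing step: for very short progressions the density hypothesis alone is too weak to enforce the needed compatibility between small elements of $X - X$ and gaps in $X$, and the small cases would require separate handling.

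The main obstacle is therefore the structural case split producing the initial segment inside $X - X$; handling the ``spread out'' alternative without losing too much in the constants (so that $4k$ dominates every gap in $X$) is the subtle quantitative point.
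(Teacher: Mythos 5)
The sumset identity you lead with is the same one the paper uses: $5X - 4X = X + (4X - 4X)$, which you write as $X + 4(X-X)$ (four-fold sumset of the difference set). The divergence is in what you do with it. The paper applies a lemma of Lev (\cite[Lemma 1]{lev}) directly to the set $X$ to conclude that $4X$ itself contains an interval of length exceeding $N = |P|$; consequently $4X - 4X \supset \{-N,\dots,N\}$, and then $X + (4X - 4X)$ contains a translate of $\{-N,\dots,N\}$ by an element of $X \subset P$, which already contains $P$. No gap analysis on $X$ is needed, and in particular the interval produced is automatically long enough that every point of $P$ is covered regardless of where $X$ sits.

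Your route instead tries to manufacture a symmetric interval $\{-k,\dots,k\}$ inside $X - X$ and then balance $4k$ against the lengths of the gaps of $X$ in $P$. This is a genuinely different strategy, and the step you flag as ``the main technical content'' is indeed a real gap, not merely an omitted calculation. The obvious pigeonhole bound $|X \cap (X-d)| \geq 2|X| - (n + |d|)$ gives $d \in X - X$ only for $|d| < 2|X| - n$, which under the stated hypothesis $|X| > n/2$ can be vacuous (for odd $n$ one can have $2|X| - n = 1$, giving only $d=0$). After that one must argue that whenever $X - X$ has small initial segment, $X$ must also have small gaps, and quantify the trade-off so that $4k$ always beats the largest gap and also the end gaps of $P$. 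You gesture at a dichotomy (``essentially an AP'' versus ``spread out'') and at Freiman-type rigidity such as the $3k-4$ theorem, but none of this is carried out, and it is not evident that the $3k-4$ theorem is the right tool: the hypothesis here is a density bound, not a doubling bound, and the conclusion you want is about the shape of $X - X$ near zero rather than about $X$ being covered by an AP. As written, your sketch does not establish the lemma.

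The practical upshot is that the paper's proof is much shorter precisely because it imports the hard combinatorics from Lev's lemma, which hands it a long interval inside $4X$ outright and thereby sidesteps the ``$k$ versus gap length'' balancing act entirely. If you want to pursue your approach, the missing step is to make the dichotomy precise: show that under the density hypothesis, either $X$ contains a long run of consecutive elements (so $X - X$ contains a long central interval directly), or $X$ is well-distributed enough that both the gaps of $X$ are short and the pigeonhole argument already yields a central interval in $X - X$ of the required length — and in both cases verify the quantitative relation including the endpoints of $P$. That is exactly the content the proposal currently hand-waves.
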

\begin{proof}
The statement of the lemma being affine-invariant, we may suppose without loss of generality that $P = \{1,\dots,N\}$. Since $|X| > N/2$, the highest common factor of the elements of $X$ is 1. By \cite[Lemma 1]{lev} with $k = 2$ (and a short computation), $4X$ contains an interval of length at least $4(\frac{N}{2} - 3) > N$. It follows that $4X - 4X$ contains $\{-N,\dots,N\}$, and the result follows immediately.\end{proof}

In the proof of the next lemma we will use the notion of a \emph{Freiman homomorphism}. See \cite[Definition 5.21]{tv} for details.

\begin{theorem}\label{dub-4-thm}
Let $\eps > 0$. Suppose that $A$ is a finite set of integers such that $|A - A| \leq (4 - \eps)|A|$. Then there is an arithmetic progression $P \subset \Z$ of length $\gg_{\eps} |A|$ on which the density of $A$ is at least $\frac{1}{2} + c\eps$.
\end{theorem}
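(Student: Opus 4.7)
The plan is to reduce Theorem \ref{dub-4-thm} to Theorem \ref{int-thm} by a Ruzsa-style Freiman modelling argument, using the preceding lemma of Lev to ensure that the progression produced by Theorem \ref{int-thm} can actually be pulled back to $\Z$. After handling the trivial case when $|A|$ is bounded in terms of $\eps$, I would apply Ruzsa's Freiman modelling lemma to produce a Freiman $s$-isomorphism $\phi : A \to A'$ with $A' \subseteq \{1, \dots, N\}$ and $N \leq C_\eps |A|$. The order $s$ must be chosen so that $\phi$ will extend to a Freiman $2$-isomorphism on $5A - 4A$; since every element of $5A - 4A$ involves $9$ summands of $A$, taking $s = 18$ suffices. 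In particular $|A'| = |A|$ and $|A' - A'| = |A - A| \leq (4-\eps)|A'|$, so using $|A'| \geq N/C_\eps$ we may rewrite this as $|A' - A'| \leq 4|A'| - \eps' N$ for $\eps' := \eps/C_\eps \gg_\eps 1$.

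I would then invoke Theorem \ref{int-thm} with $\eps'$ in place of $\eps$ to obtain an arithmetic progression $P' \subseteq \{1, \dots, N\}$ of length $|P'| \gg_\eps N \gg_\eps |A|$ on which $A'$ has density at least $\tfrac{1}{2} + \tfrac{1}{5}\eps'$. Setting $X := A' \cap P'$, we have $|X| > \tfrac{1}{2}|P'|$ and (assuming $|A|$ is large enough for $|P'| > 12$) the preceding lemma gives $P' \subseteq 5X - 4X \subseteq 5A' - 4A'$. Because $\phi$ is a Freiman $18$-isomorphism, $\phi^{-1}$ extends canonically to a well-defined Freiman $2$-isomorphism $\widetilde{\phi^{-1}} : 5A' - 4A' \to 5A - 4A$, which in particular maps arithmetic progressions to arithmetic progressions. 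Thus $P := \widetilde{\phi^{-1}}(P')$ is an arithmetic progression in $\Z$ of length $|P'| \gg_\eps |A|$, and $\widetilde{\phi^{-1}}$ bijects $X$ with a subset of $A \cap P$, yielding density at least $\tfrac{1}{2} + \tfrac{1}{5}\eps'$ on $P$.

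The main obstacle is bookkeeping around the Freiman isomorphism: one has to track the exact order needed to guarantee simultaneously (i) that $\phi^{-1}$ extends to all of $5A' - 4A'$, which requires $s \geq 9$, and (ii) that this extension is itself a Freiman $2$-isomorphism so that progressions pull back to progressions, which requires $s \geq 18$. The use of Lev's lemma to certify $P' \subseteq 5A' - 4A'$ is the essential input that makes (ii) useful; without it, the modelled progression $P'$ would typically lie outside the image of any reasonable extension of $\phi$. A minor technical point is passing from Ruzsa's usual $\Z/N\Z$-valued model to one in $\{1, \dots, N\}$, which costs only an extra factor in $N$ depending on $s$ and $\eps$. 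The density constant $c = 1/(5C_\eps)$ produced this way depends on $\eps$ through the Ruzsa modelling bound, consistent with the stated conclusion $\tfrac{1}{2} + c\eps$.
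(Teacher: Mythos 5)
Your argument follows the same route as the paper's: Freiman $18$-isomorphism to a model $A' \subseteq \{1,\dots,N\}$ with $N \ll |A|$ (the paper cites the rectification theorem of Green--Ruzsa for this), then Theorem \ref{int-thm} applied to $A'$, then Lev's lemma to certify $P' \subseteq 5A' - 4A'$ so that the progression pulls back to $\Z$ via the induced Freiman $2$-homomorphism. The only slip is cosmetic: the modelling constant $C$ need not depend on $\eps$ (the doubling bound $|A-A| \leq 4|A|$ and the order $s=18$ are both absolute), so $c$ in the conclusion is an absolute constant, as the paper's remark ($c \approx 2^{-1000}$) indicates; your write-up weakens this to $c = c(\eps)$ unnecessarily.
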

\begin{proof}
By \cite[Theorem 1.4]{green-ruzsa-rectification}, every set $A \subset \Z$ with $|A - A| \leq 4|A|$ is Freiman $18$-isomorphic to a subset of $\{1,\dots,N\}$ for some $N \ll |A|$. Let $\pi : A \to A' \subset \{1,\dots,N\}$ be this Freiman isomorphism. Then clearly $|A'|/N \gg 1$, so $|A' - A'| \leq 4|A'| - \eps' N$ with $\eps' \gg \eps$. By Theorem \ref{int-thm} to $A'$, there is a progression $P' \subset \{1,\dots,N\}$ of length $|P'| \gg_{\eps} N$ on which the density of $A'$ is at least $\frac{1}{2} + \tfrac{1}{5}\eps'$. Finally, by the preceding lemma, $P' \subset 5A' - 4A'$, so it follows from basic facts about Freiman homomorphisms (see \cite[Section 5.2]{tv} for example) that $\pi^{-1} : A' \to A$ induces a Freiman $2$-homomorphism $\tilde\pi^{-1} : P' \to \Z$ coinciding with $\pi^{-1}$ on $A' \cap P'$. The image $P = \tilde\pi^{-1}(P')$ is then a progression of length $\gg_\eps |A|$ on which $A$ has density at least $\frac{1}{2} + c \eps$.
\end{proof}

\emph{Remarks.} 
\begin{enumerate}
	\item The value for $c$ given by this argument is something like $2^{-1000}$.
	\item Under stronger conditions such as $|A + A| < 3|A|$ or $|A - A| < 3|A|$, more precise information can be obtained: see \cite[Theorem 1.9]{freiman-book} or \cite{lev-smeliansky}.
	\item Statements of the same form as Theorem \ref{dub-4-thm}, but with $\frac{1}{2} + c\eps$ replaced by some small quantity $f(\eps) > 0$, follow from versions of Freiman's theorem \cite[Theorem 2.8]{freiman-book}, \cite[Theorem 1.2]{bilu}. These statements come with more effective lower bounds on the length of $P$.
\end{enumerate}

\appendix

\section{Regularity and counting lemmata}

\label{appA}

In this appendix we collect some tools used in the main part of the paper. All of these are more or less standard, or at least have easily quotable references.

\emph{The arithmetic regularity lemma.} We begin with the arithmetic regularity lemma, the main result of \cite{green-tao-arithregularity}, used twice in the paper. As reassurance to the reader who views that paper with trepidation, we remark that the majority of it is given to applications, and only Sections 1 and 2 are relevant to us. Furthermore, that paper establishes a regularity lemma for the Gowers $U^{s+1}$-norm for general $s$, whereas we only need the case $s = 1$. This means that the notion of a \emph{nilsequence}, beyond the abelian case, is not relevant here. A complete, self-contained proof of the arithmetic regularity lemma in the form we need it here can be written up in less than 10 pages. The first-named author has provided such a write-up online \cite{sean-writeup}.

We begin by defining a quantitative notion of irrationality for vectors $\theta \in \R^d$.

\begin{definition}\label{irrational-def}
Suppose that $\theta \in \R^d$. Let $N \geq 1$ be an integer and let $A > 0$ be some real parameter. We say that $\theta$ is $(A, N)$-irrational if whenever $q_1, \dots, q_d$ are integers, not all zero, with $\sum_i |q_i| \leq A$ we have $\| q_1 \theta_1 + \dots + q_d \theta_d \|_{\R/\Z} \geq A/N$.
\end{definition}

\begin{lemma}\label{ar} Suppose we are given a parameter $\delta > 0$ and a growth function $\mathcal{F} : \N \rightarrow \R_+$. Then for any function $f : \{1,\dots, N\} \to [0,1]$ there is an $M \ll_{\delta,\mathcal{F}} 1$ and a decomposition $f = f_{\tor} + f_{\sml}+ f_{\unf}$ into functions taking values in $[-1,1]$,  where $\| f_{\sml} \|_{\ell_2(N)} \leq \delta$, $\| f_{\unf} \|_{U^2(N)} \leq 1/\mathcal{F}(M)$ and $f_{\tor}(n) = F(n \mdlem{q}, n/N, \theta n)$ for some $q,d \leq M$ and some function $F : \Z/q\Z \times [0,1] \times (\R/\Z)^d \rightarrow [0,1]$ with Lipschitz constant at most $M$. Furthermore the element $\theta \in (\R/\Z)^d$ may be taken to be $(\mathcal{F}(M), N)$-irrational.
\end{lemma}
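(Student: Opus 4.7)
The plan is to prove Lemma \ref{ar} by an energy-increment argument on the Fourier side, followed by a factorization step that separates the torsion component of the resulting ``significant frequencies'' from their irrational parts and enforces the required irrationality. Throughout, I would embed $\{1,\dots,N\}$ into $\Z/N'\Z$ for some prime $N' \in [N,2N]$, so that the identity $\|g\|_{U^2(N')}^4 = \sum_\xi |\hat g(\xi)|^4$ can be used cleanly.

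First, start with the trivial decomposition $f = f_{\tor} + f_{\unf}$, where $f_{\tor} = \E f$. At stage $i$, if $\|f_{\unf}\|_{U^2(N)} > 1/\mathcal{F}(M_i)$, the above identity yields a frequency $\xi_{i+1} \in \frac{1}{N'}\Z/\Z$ with $|\widehat{f_{\unf}}(\xi_{i+1})| \geq 1/\mathcal{F}(M_i)^2$. Append $\xi_{i+1}$ to the list of characters and redefine $f_{\tor}$ to be the $L^2$-projection of $f$ onto the span of the complex exponentials $\{e(\xi_j \cdot) : j \leq i+1\}$. The $L^2$-energy of $f_{\tor}$ then jumps by at least $\mathcal{F}(M_i)^{-4}$, while $\|f_{\tor}\|_{\ell^2(N)}^2 \leq \|f\|_\infty^2 \leq 1$, so the loop halts in $O_{\mathcal{F}}(1)$ steps, yielding a final complexity $M$ depending only on $\delta$ and $\mathcal{F}$. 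Any residual $L^2$ mass is absorbed into $f_{\sml}$; to meet the bound $\|f_{\sml}\|_{\ell^2(N)} \leq \delta$ it suffices to apply this first with threshold $\delta$, and to continue the iteration with the $U^2$ threshold to obtain the separate $f_{\unf}$ bound.

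Next, write each $\xi_i = a_i/q + \tilde\xi_i$ using simultaneous Dirichlet approximation, where $q$ is a common denominator at most $\mathcal{F}(M)^{O(k)}$ chosen so that the $\tilde\xi_i$ are small at scale $1/N$; this splits the dependence of $f_{\tor}(n)$ into a $\bmod q$ factor and a torus factor $\theta = (\tilde\xi_1,\dots,\tilde\xi_k) \in (\R/\Z)^d$. Repackage the resulting trigonometric polynomial as $F(n \md q, n/N, \theta n)$ on $\Z/q\Z \times [0,1] \times (\R/\Z)^d$; including the harmless factor $n/N$ in $F$ costs nothing. Because there are at most $M$ characters and their Fourier coefficients are bounded, $F$ has Lipschitz constant $O_M(1)$, absorbed into the final $M$. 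Any truncation incurred in this packaging is pushed into $f_{\sml}$.

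The main obstacle is enforcing $(\mathcal{F}(M), N)$-irrationality of $\theta$. If a relation $\|q_1 \tilde\xi_1 + \cdots + q_d \tilde\xi_d\|_{\R/\Z} < \mathcal{F}(M)/N$ holds with $\sum_i |q_i| \leq \mathcal{F}(M)$, then up to a rational shift with bounded denominator one of the coordinates, say $\tilde\xi_d$, is an integer combination of the remaining ones plus something torsion. Substituting for $\tilde\xi_d$, absorbing its small residual into a refined modulus $q'$ (of size controlled by $\mathcal{F}(M)$), and decreasing $d$ by one reduces the torus dimension while increasing the complexity in a controlled fashion. Iterating at most $d$ times terminates in an irrational $\theta$. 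To make both stages fit together one chooses the growth function used in the energy-increment step to dominate the growth needed in the factorization step (a fixed, bounded number of iterations of $\mathcal{F}$), so that the final $M$ bounds $q$, $d$, and $\|F\|_{\Lip}$ simultaneously, and the final $\theta$ is $(\mathcal{F}(M),N)$-irrational.
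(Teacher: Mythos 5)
The paper does not actually prove Lemma~\ref{ar}: it quotes it as \cite[Theorem 1.2]{green-tao-arithregularity} specialised to $s=1$ and only supplies a dictionary between that paper's ``irrational virtual nilsequence'' language and the statement above. So there is no in-paper proof to compare against, only the standard Green--Tao argument, and it is against that which your sketch should be judged.

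Your sketch has the right overall shape (Fourier energy increment, then split the selected frequencies into a rational part, an archimedean $n/N$ part and an irrational torus part, then enforce irrationality by repeatedly lowering the torus dimension), but the termination argument for the energy increment is wrong. You iterate with a moving threshold $1/\mathcal F(M_i)$, where $M_i$ grows with $i$, and you conclude the loop halts in $O_{\mathcal F}(1)$ steps because the energy of $f_{\tor}$ jumps by at least $\mathcal F(M_i)^{-4}$ each time while staying $\leq 1$. This does not follow: as $M_i$ grows and $\mathcal F$ is an arbitrary growth function, the increments $\mathcal F(M_i)^{-4}$ can decay fast enough that their sum over infinitely many steps remains below $1$ (take $\mathcal F(m)=2^m$), so the loop need not halt at all, and in particular the bound $M \ll_{\delta,\mathcal F} 1$ is not established. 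The aside about ``running first with threshold $\delta$, then with the $U^2$ threshold'' faces the same problem in the second phase. The correct argument is a double iteration, equivalently a pigeonhole on energy increments: given a structured approximant $f_i$ of complexity $M_i$, run an \emph{inner} loop with the \emph{fixed} threshold $1/\mathcal F(M_i)$ until $\|f - f_{i+1}\|_{U^2(N)} \leq 1/\mathcal F(M_i)$; this inner loop genuinely terminates after $\leq \mathcal F(M_i)^4$ steps since its threshold does not move. The outer energies $E_i = \|f_i\|_{\ell^2(N)}^2$ increase up to $1$, so by pigeonhole some $i \leq \delta^{-2}$ has $E_{i+1}-E_i \leq \delta^2$, and one sets $f_{\tor}=f_i$, $f_{\sml}=f_{i+1}-f_i$ (so $\|f_{\sml}\|_{\ell^2(N)} \leq \delta$ by Pythagoras) and $f_{\unf}=f-f_{i+1}$. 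That nested structure is the ingredient your sketch is missing. A smaller but real further issue: the $L^2$-projection of $f$ onto a span of exponentials is a trigonometric polynomial with values of size up to $O(M)$, whereas the lemma requires $F$ to take values in $[0,1]$; this cannot simply be clamped away, because the overshoot is controlled by $\|f-f_{\tor}\|_{\ell^2(N)}$ which includes $f_{\unf}$ and need not be small in $L^2$. The usual remedy is to build $f_{\tor}$ as a conditional expectation of $f$ over Bohr cells (which is automatically $[0,1]$-valued) and then mollify to get the Lipschitz model $F$, absorbing the mollification error into $f_{\sml}$.
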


Here $\| g \|_{U^2(N)}$ is the Gowers $U^2(N)$-norm, whose definition will be recalled below. We do not offer a proof of this lemma, but merely a guide to extracting this result from \cite{green-tao-arithregularity}. The function $f_{\tor}$ written here is the same thing as, in the language of that paper, a ``$(\mathcal{F}(M), N)$-irrational virtual nilsequence of degree $\leq 1$, complexity $\leq M$ and scale $N$''. Once we have justified this assertion, Lemma \ref{ar} is essentially the same as \cite[Theorem 1.2]{green-tao-arithregularity}, the proof of which occupies Section 2 of that paper. The definition of an irrational virtual nilsequence of degree $\leq s$ is rather long and complicated for general $s$, but for $s = 1$ a great deal simplifies: a \emph{filtered nilmanifold} of degree $1$ and complexity $\leq M$ (cf. \cite[Definition 1.4]{green-tao-arithregularity}) is just the torus $(\R/\Z)^d$ for $d \leq M$, a \emph{polynomial orbit} of degree $1$(cf. \cite[Definition 1.7]{green-tao-arithregularity}) is just a sequence of the form $n \mapsto \theta n$ for some $\theta \in (\R/\Z)^d$ and a \emph{virtual nilsequence} of degree $1$ and complexity $\leq M$ at scale $N$ (cf. \cite[Definition 1.9]{green-tao-arithregularity}) is just a function $F(n \md{q}, n/N, \theta n)$ with $\theta \in (\R/\Z)^d$, $d,q,\|F\|_{\Lip} \leq M$. Finally, an $(A,N)$-irrational sequence (cf. \cite[Definition A.6]{green-tao-arithregularity}), in the case that the sequence has the form $n\mapsto\theta n$ where $\theta \in (\R/\Z)^d$, coincides with the notion of irrationality defined above.

\emph{Equidistribution and counting.} If $\theta \in (\R/\Z)^d$ is highly irrational in the sense of Definition \ref{irrational-def}, then the sequence $\theta n$ is highly equidistibuted on $(\R/\Z)^d$ as $n$ ranges over fairly long progressions. Moreover, the triple $(n \md{q}, n/N, n\theta)$ is highly equidistributed in $\Z/q\Z \times [0,1] \times n\theta$ as $n$ varies over $\{1,\dots,N\}$. We prove various statements of this type required in the main body of the paper. The first is quite classical.

\begin{lemma}\label{distribution-integral-a}
Suppose that $\theta \in (\R/\Z)^d$ is $(A, N)$-irrational, and let $F : (\R/\Z)^d \rightarrow \C$ be a function with Lipschitz constant at most $M$. Suppose that $P \subset \{1,\dots, N\}$ is an arithmetic progression of length at least $\eta N$. Then, provided that $A > A_0(M, d, \eta, \delta)$ is large enough,
\[ \left|\E_{n \in P} F(\theta n) - \int F \, d\mu \right| \leq \delta.\]
\end{lemma}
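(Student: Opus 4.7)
This is a classical equidistribution statement and I would prove it by Fourier analysis. The plan is to approximate $F$ by a short trigonometric polynomial, then bound the exponential sum along the progression at each nonzero frequency using the $(A,N)$-irrationality hypothesis.

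First I would reduce to the case where $F$ is a trigonometric polynomial of bounded degree. Since $F$ is $M$-Lipschitz, convolving with a smooth bump of scale $\kappa$ on $(\R/\Z)^d$ produces a smooth function $\tilde F$ with $\|F - \tilde F\|_\infty \leq O(M \kappa)$, and truncating its Fourier series to frequencies $|q|_1 := \sum_i |q_i| \leq Q$ introduces a further $L^\infty$ error which can be made $O_{M, d, \kappa}(Q^{-1})$ (say). Choosing $\kappa$ small enough and then $Q$ large enough, both in terms of $M$, $d$ and $\delta$, we can arrange
\[ \Bigl\| F - \sum_{|q|_1 \leq Q} c_q e(q \cdot x) \Bigr\|_\infty \leq \tfrac{1}{3}\delta, \]
with coefficients $|c_q| \leq \|F\|_\infty \leq M$. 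Both $\E_{n \in P} F(\theta n)$ and $\int F\, d\mu$ change by at most $\delta/3$ under this replacement, and the contribution of the $q = 0$ term matches $\int F\,d\mu$ exactly, so it remains to bound
\[ \sum_{0 < |q|_1 \leq Q} c_q \, \E_{n \in P} e(q \cdot \theta n). \]

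For each fixed $q$ with $0 < |q|_1 \leq Q$, write $P = \{a + k d' : 0 \leq k < L\}$ with $L \geq \eta N$ and $d' \leq N/L \leq 1/\eta$. Then
\[ \E_{n \in P} e(q \cdot \theta n) = \frac{e(a q \cdot \theta)}{L} \cdot \frac{e(L d'\, q \cdot \theta) - 1}{e(d'\, q \cdot \theta) - 1}, \]
which is bounded in absolute value by $1/(L \|d' q \cdot \theta\|_{\R/\Z})$. The vector $d' q$ has $|d' q|_1 \leq Q/\eta$, so if we take $A \geq Q/\eta$ the $(A,N)$-irrationality of $\theta$ gives $\|d' q \cdot \theta\|_{\R/\Z} \geq A/N$, and hence
\[ \bigl| \E_{n \in P} e(q \cdot \theta n) \bigr| \leq \frac{N}{L A} \leq \frac{1}{\eta A}. \]

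Summing over the $O(Q^d)$ nonzero frequencies with $|c_q| \leq M$ gives total contribution $O(M Q^d / (\eta A))$, which is at most $\delta/3$ provided we take $A_0(M, d, \eta, \delta)$ larger than a suitable constant multiple of $M Q^d / (\eta \delta)$ (recalling that $Q$ was chosen in terms of $M, d, \delta$). Combining the three error contributions of $\delta/3$ completes the proof. The only mildly nontrivial step is matching the Fourier truncation parameter $Q$ against the irrationality threshold $A$, but since $A_0$ is allowed to depend on all of $M, d, \eta, \delta$ there is no real obstacle.
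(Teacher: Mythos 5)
Your proof is correct and follows essentially the same route as the paper's: truncate a Fourier expansion of $F$, bound the resulting exponential sums along $P$ by a geometric series, and invoke the $(A,N)$-irrationality to make each nonzero frequency's contribution small once $A$ exceeds a threshold depending on the truncation parameter, the common difference bound $\leq 1/\eta$, and $\delta$. The one minor slip is the assertion $\|F\|_\infty \leq M$, which does not follow from Lipschitz-ness alone; one should first replace $F$ by $F - \int F\,d\mu$ (which changes neither side of the desired inequality) to get $\|F - \int F\|_\infty \ll \sqrt{d}\,M$, after which the coefficient bound holds.
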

\begin{proof}
The key here (as usual in equidistribution theory) is to take a Fourier expansion of $F$ and truncate it. In particular, we may find $M_0 = O_{M, d, \delta}(1)$ and coefficients $c_m$ with $c_0 = \int F$ and $c_m = O_{M,d}(1)$ for $m\neq 0$ such that
\[  \left| F(x) - \sum_{|m| \leq M_0} c_m e(m \cdot x) \right| \leq \delta/2 \]
uniformly in $x$. For a proof, see for example \cite[Lemma A.9]{green-tao-quadraticuniformity}. 
It follows, of course, that 
\[
  \left| \E_{n \in P} F(\theta n) - \int F\, d\mu \right| \leq  \sum_{|m| \leq M_0, m \neq 0} |c_m| | \E_{n \in P} e(m \cdot \theta n)| + \frac{\delta}{2}.
\]
Thus we need only show that 
\[ \E_{n \in P} e(m \cdot \theta n) = o_{m, \eta; A \rightarrow \infty}(1),\] and then take $A$ sufficiently large. If the common difference of the arithmetic progression $P$ is $h$, then by summing the geometric progression we have the bound
\[ \E_{n \in P} e(m \cdot \theta n) \ll \frac{1}{\eta N \| (m \cdot \theta) h \|_{\R/\Z}}.\] If $A > |h|(|m_1| + \dots + |m_d|)$ (where $m = (m_1,\dots, m_d)$) then, by the definition of $(A,N)$-irrationality, $\| (m \cdot \theta) h \|_{\R/\Z} \geq A/N$. The result follows immediately. 
\end{proof}

Our second result is a little more involved, but is proved in essentially the same way as the last lemma. It states that if $\theta$ is highly irrational and $N$ is sufficiently large then $(n \md{q}, n/N, \theta n)$ is highly equidistributed in $\Z/q\Z \times [0,1] \times (\R/\Z)^d$. 

\begin{lemma}\label{distribution-integral}
Suppose that $\theta \in (\R/\Z)^d$ is $(A, N)$-irrational. Let $q \in \N$, and let $F : \Z/q\Z \times [0,1] \times (\R/\Z)^d \rightarrow \C$ be a function with Lipschitz constant at most $M$. Let $\delta > 0$ be arbitrary. Then, provided that $A > A_0(M, q, d, \delta)$ and $N>N_0(M,q,d,\delta)$ are large enough,
\[ 
  \left| \E_{n \leq N} F(n \mdlem{q}, n/N, \theta n) - \int F \, d\mu \right| \leq \delta.
\]
\end{lemma}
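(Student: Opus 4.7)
The plan is to reduce to Lemma \ref{distribution-integral-a} by partitioning $\{1,\dots,N\}$ into many long subprogressions on each of which the first two coordinates $(n \bmod q,\, n/N)$ are essentially constant, so that all the equidistribution work is done by the torus coordinate $\theta n$.

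More precisely, fix a small parameter $\eta = \eta(M,q,\delta) > 0$, to be chosen so that $M\eta$ is much smaller than $\delta$ (e.g.\ $\eta = \delta/(10M)$). Partition $[0,1]$ into $K = \lceil 1/\eta\rceil$ intervals $J_1,\dots,J_K$ of length at most $\eta$, and for each $a \in \Z/q\Z$ and each $k \in \{1,\dots,K\}$ let
\[ P_{a,k} = \{ n \in \{1,\dots,N\} : n \equiv a \pmod{q},\ n/N \in J_k\}. \]
Each $P_{a,k}$ is an arithmetic progression with common difference $q$, and for $N$ sufficiently large compared to $q,\eta^{-1}$ its length is at least $\eta N/(2q)$. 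Pick a representative value $y_k \in J_k$ for each $k$. By the Lipschitz property of $F$, for every $n \in P_{a,k}$ we have
\[ |F(n \bmod q, n/N, \theta n) - F(a, y_k, \theta n)| \leq M\eta. \]

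Now apply Lemma \ref{distribution-integral-a} to the Lipschitz function $x \mapsto F(a, y_k, x)$ on $(\R/\Z)^d$ (still $M$-Lipschitz) along the progression $P_{a,k}$, which has relative length at least $\eta/(2q)$ in $\{1,\dots,N\}$. Provided $A$ is large enough in terms of $M, d, \eta, q$ and a suitable error tolerance $\delta'$, this gives
\[ \left| \E_{n \in P_{a,k}} F(a, y_k, \theta n) - \int_{(\R/\Z)^d} F(a, y_k, x)\,d\mu(x) \right| \leq \delta'. \]
Summing the contributions of the $P_{a,k}$, weighted by $|P_{a,k}|/N \approx |J_k|/q$, yields
\[ \E_{n\leq N} F(n\bmod q, n/N, \theta n) = \sum_{a,k} \frac{|P_{a,k}|}{N}\left( \int_{(\R/\Z)^d} F(a, y_k, x)\,d\mu(x) + O(\delta' + M\eta)\right). \]
The right-hand side is (up to an error of $\delta' qK + M\eta$) a Riemann sum for $\int F\,d\mu$ with mesh $\eta$ in the $[0,1]$ coordinate and exact in the $\Z/q\Z$ coordinate; since $F$ is $M$-Lipschitz this Riemann sum approximates the true integral to within $O(M\eta)$. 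Choosing $\eta = \delta/(10M)$ and $\delta' = \delta/(10qK)$, and then taking $A$ large enough in terms of $M,q,d,\delta$ so that Lemma \ref{distribution-integral-a} applies with this $\delta'$, and $N$ large enough that the progressions $P_{a,k}$ have close to their ``expected'' length, all error terms combine to at most $\delta$.

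The main obstacle is purely bookkeeping: ensuring that the quantitative hierarchy of parameters $\delta' \ll \eta \ll \delta$ lines up with the hypothesis that $A$ be sufficiently irrational relative to the (now shorter) progressions $P_{a,k}$. Because the lengths of those progressions are bounded below by $\eta N/(2q) \gg_{M,q,\delta} N$, the relative length parameter $\eta_{\text{prog}}$ needed in Lemma \ref{distribution-integral-a} depends only on $M,q,\delta$, so the required lower bound $A > A_0(M,q,d,\delta)$ is indeed uniform in $N$.
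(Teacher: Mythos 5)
Your proof is correct, but it takes a genuinely different route from the paper's. The paper's argument performs a truncated Fourier expansion of $F$ simultaneously in all three coordinates; since $F$ need not satisfy $F(\cdot,0,\cdot)=F(\cdot,1,\cdot)$, this requires the small trick of first extending $F$ symmetrically to $\Z/q\Z\times[-1,1]\times(\R/\Z)^d$ and using a cosine-type basis $\phi_{a,m,\mathbf{m}}$ in the second variable; one then bounds each resulting exponential sum directly via the $(A,N)$-irrationality hypothesis. Your approach instead sidesteps the Fourier analysis in the first two coordinates entirely: you partition $\{1,\dots,N\}$ into subprogressions $P_{a,k}$ on which $(n\bmod q, n/N)$ is essentially constant, apply Lemma~\ref{distribution-integral-a} (which does the Fourier work for the torus coordinate alone) on each piece, and then recognise the resulting sum as a Riemann sum for $\int F\,d\mu$. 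This cleanly avoids the periodicity issue and expresses the lemma transparently as a ``constant over small cells plus equidistribution in $\T$'' statement, at the modest cost of introducing the auxiliary partition parameter $\eta$. Your dependency tracking is sound: the common difference of $P_{a,k}$ is $q$ and its relative length is $\gg_{M,q,\delta}1$, so the threshold $A_0$ in Lemma~\ref{distribution-integral-a} depends only on $M,q,d,\delta$, exactly as required. One minor slip: once you factor $\frac{|P_{a,k}|}{N}$ outside the error $O(\delta'+M\eta)$ and use $\sum_{a,k}\frac{|P_{a,k}|}{N}=1$, the aggregate error from the equidistribution step is $O(\delta'+M\eta)$, not $\delta' qK+M\eta$; this only means your choice $\delta'=\delta/(10qK)$ is more stringent than necessary, so nothing breaks.
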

\begin{proof}[Proof sketch]
Again the idea is to take a truncated Fourier expansion of $F$, but because $F|_{\Z/q\Z\times\{0\}\times(\R/\Z)^d}$ and $F|_{\Z/q\Z\times\{1\}\times(\R/\Z)^d}$ need not agree the expansion looks a little more complicated. The key point is that $F$ can be extended to an $M$-Lipschitz function $\Z/q\Z\times[-1,1]\times(\R/\Z)^d\to\C$ such that $F(x,y,z)=F(x,-y,z)$, so $F$ may be approximated by a sum of the functions $\phi_{a, m, \mathbf{m}}$ given by
\begin{equation}\label{phi}\phi_{a,m ,\mathbf{m}}(x,y,z) = e\left(\frac{a}{q}x+ \frac{m}{2}y + \mathbf{m} \cdot z\right) + e\left(\frac{a}{q}x - \frac{m}{2}y + \mathbf{m} \cdot z\right),\end{equation}
where $a \in \Z/q\Z, m \in \Z$ and $\mathbf{m} \in \Z^d$. Then just as in the proof of the previous lemma we need only check that 
\begin{equation}\label{to-check-22} \E_{n \leq N} \phi_{a, m, \mathbf{m}} (n \md{q}, n/N, \theta n) = o_{a, m, \mathbf{m}, q; A,N \to \infty}(1)\end{equation}
provided that $a,m,\mathbf{m}$ are not all zero. Substituting in, the left-hand side is 
\begin{equation}\label{to-check-3}   \E_{n \leq N} \left(e\left(\left(\frac{a}{q} + \frac{m}{2N} + \mathbf{m} \cdot \theta\right) n  \right) + e\left(\left(\frac{a}{q} - \frac{m}{2N} + \mathbf{m} \cdot \theta\right) n  \right)\right). \end{equation}
Summing the geometric progressions, we see that this is bounded by $\eps$ unless
\begin{equation}\label{to-use-9} \left\| \frac{a}{q} + \frac{m}{2N} + \mathbf{m} \cdot \theta \right\|_{\R/\Z} \ll \frac{1}{N\eps}. \end{equation}

Supposing first that $\mathbf{m} \neq 0$, inequality \eqref{to-use-9} implies
\[   \left\| \frac{mq}{2N} + q \mathbf{m} \cdot \theta \right\|_{\R/\Z} \ll \frac{q}{N\eps}, \]
and hence
\[ \left \| \mathbf{m'} \cdot \theta \right \|_{\R/\Z} \ll \frac{q}{N\eps} + \frac{mq}{2N}, \]
where $\mathbf{m'} = q\mathbf{m}$. If $A$ is sufficiently large in terms of $\eps, q, m$ and $\mathbf{m}$, this is contrary to the $(A,N)$-irrationality of $\theta$.

Now suppose that $\mathbf{m} = 0$. Then if $N$ is large enough depending on $m$ and $q$, \eqref{to-use-9} implies that $a = 0$. Thus $a = \mathbf{m} = 0$, and hence $m \neq 0$. Then the expression \eqref{to-check-3} is $\E_{n \leq N}\left(e(mn/2N) + e(-mn/2N)\right) = 0$, so \eqref{to-check-22} certainly follows in this case as well.
\end{proof}

%

The next result is a kind of ``counting lemma''. It eventually allows one to relate summing triples $(x, y, x+y)$ in $A \subset \{1,\dots,N\}$ with summing triples in $(\R/\Z)^d$ weighted by $f_{\tor}$.

\begin{lemma}\label{counting-lemma}
Suppose that $\theta \in (\R/\Z)^d$ is $(A, N)$-irrational. Let $q \in \N$, and let $F : \Z/q\Z \times [0,1] \times (\R/\Z)^d \to \C$ be a function with Lipschitz constant at most $M$. Let $f(n) = F(n \mdlem{q}, n/N, \theta n)$. If $A > A_0(q,M,\eps)$ and $N>N_0(q,M,\eps)$ are large enough then 
\[ |T(f) - T(F)| = |T_{\{1,\dots, N\}}(f) - T_{\Z/q\Z \times [0,1] \times (\R/\Z)^d}(F)| \leq \eps.\]
\end{lemma}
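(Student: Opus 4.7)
The plan is to run a truncated Fourier expansion argument in the same spirit as the proof of Lemma \ref{distribution-integral}, pushed up to the trilinear setting. First I would identify the $[0,1]$ factor of $X = \Z/q\Z \times [0,1] \times (\R/\Z)^d$ with $\R/\Z$ (so that $x+x'$ makes sense in the definition of $T(F)$) and apply the evenness/symmetrization trick recalled in \eqref{phi} to write $F$ as a sum of characters in the first coordinate together with the functions $\phi_{a,m,\mathbf{m}}$ in the second and third coordinates. Thus for any $\eta>0$ I would obtain coefficients $c_{a,m,\mathbf{m}}$ and a parameter $M_0 = O_{M,q,d,\eta}(1)$ such that
\[ F = \sum_{|a|\leq q,\,|m|\leq M_0,\,|\mathbf{m}|\leq M_0} c_{a,m,\mathbf{m}}\,\chi_{a,m,\mathbf{m}} + R,\qquad \|R\|_\infty \leq \eta,\]
where $\chi_{a,m,\mathbf{m}}(x,y,z) = e(ax/q)\phi_{a,m,\mathbf{m}}(x,y,z)$ is a pure character-type function and the coefficients are bounded in terms of $M,q,d$.

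Next I would substitute this expansion into both expressions for $T$. The error contributed by $R$ is $O_{M,q,d}(\eta)$ in both $T(f)$ and $T(F)$ provided $F$ is bounded (which follows from the Lipschitz bound on the bounded domain $X$). The remaining main term in $T(F) - T(f)$ is a sum of $O_{M,q,d,\eta}(1)$ terms of the form
\[ c_{\xi_1}c_{\xi_2}c_{\xi_3}\Bigl(\int_{X\times X}\chi_{\xi_1}(x)\chi_{\xi_2}(x')\chi_{\xi_3}(x+x')\,d\mu(x)\,d\mu(x') - \E_{n,n'\leq N}\chi_{\xi_1}(\psi(n))\chi_{\xi_2}(\psi(n'))\chi_{\xi_3}(\psi(n+n'))\Bigr),\]
where $\psi(n) = (n\md{q},n/N,\theta n)$. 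For each fixed frequency triple $\xi_i = (a_i,m_i,\mathbf{m}_i)$, the character product expands into a finite sum (from the two terms in $\phi_{a,m,\mathbf{m}}$) of exponentials $e(\alpha n + \beta n')$, and the discrete average splits as a product of two geometric series, each bounded by $1/(N\|\cdot\|_{\R/\Z})$. The $(A,N)$-irrationality of $\theta$, together with the assumed bounds $|a_i|\leq q$, $|m_i|,|\mathbf{m}_i|\leq M_0$, forces each non-resonant frequency to satisfy $\|\alpha\|_{\R/\Z},\|\beta\|_{\R/\Z}\gg_{M,q,d,\eta} A/N$, making the discrete average $O(1/A)$; in the resonant case, the exponent is identically zero and one verifies directly that the discrete average and the integral agree. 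Choosing $\eta$ smaller than a multiple of $\eps$, then $A$ large enough to swamp the total number of triples, yields $|T(f)-T(F)| \leq \eps$.

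The main obstacle is not the Fourier analysis itself but the bookkeeping surrounding the term $f(n+n')$ and $F(x+x')$: the $[0,1]$ coordinate is not a group, and for $n+n'>N$ the values $(n+n')/N$ and $((n+n')\md{q},\theta(n+n'))$ need a consistent interpretation. Mapping $[0,1]$ to $\R/\Z$ and using the evenness/symmetrization in \eqref{phi} resolves this, but one has to be careful to check that the resonant main terms in the discrete sum pick up exactly the right $\R/\Z$-integral, and that the boundary region $\{n+n' \approx N\}$ contributes negligibly (it contributes $O(1/N)$ after factoring out the bounded coefficients), which requires $N$ sufficiently large in terms of $q,M,\eps$ — exactly the second hypothesis of the lemma.
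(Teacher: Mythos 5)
Your approach — Fourier-expand $F$ directly into characters, substitute into both trilinear forms, and estimate frequency triple by frequency triple — is a more explicit version of what the paper does, but it is organized differently: the paper upgrades Lemma \ref{distribution-integral} to equidistribution of the pair $(\pi(n),\pi(n'))$ in $X\times X$ and then applies this once to the two-variable test function $F_*(x,x')=F(x)F(x')F(x+x')$. The two packagings rest on the same character estimates, and your version is arguably easier to make fully rigorous since it avoids asserting a general equidistribution statement in the product space and then checking what class of test functions it applies to.

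The one place where you need to be more careful — and, to be fair, the paper's own proof sketch has the same soft spot — is the treatment of $f(n+n')$ and $F(x+x')$ when $n+n'>N$, i.e.\ when $x_2+x_2'>1$. The definitions \eqref{t-def} and \eqref{t-N-def} silently extend $f$ and $F$ by \emph{zero} in this region, whereas ``identify $[0,1]$ with $\R/\Z$'' plus the reflection/symmetrization of \eqref{phi} would have the Fourier expansion return the periodized (or reflected) value of $F$ there, not zero. That discrepancy is not confined to a thin strip near $n+n'=N$: the set $\{n+n'>N\}$ is half the domain, so the mismatch is in general an $O(1)$ effect, not the $O(1/N)$ you claim in your final parenthetical. (The paper's $F_*$ is likewise not genuinely Lipschitz — it has a jump across the hyperplane $x_2+x_2'=1$ — so its proof sketch is glossing the same point.) The fix is to insert the cutoff $1_{n+n'\le N}$ (resp.\ $1_{x_2+x_2'\le 1}$) explicitly, replace it by a Lipschitz approximant that agrees with it outside a $\delta'$-neighbourhood of the boundary — this \emph{is} an $O(\delta')$ change to both $T(f)$ and $T(F)$ since $F$ is bounded — and only then run your Fourier expansion with the smoothed cutoff multiplied in. With that adjustment your argument goes through, but as written the resonant/main-term bookkeeping does not correctly reproduce $T(f)$ and $T(F)$.
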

\begin{proof}[Proof sketch] Write $\pi(n) = (n \md{q}, n/N, \theta n)$. We showed in Lemma \ref{distribution-integral} that, if $\theta$ is highly irrational, $\pi(n)$ is highly equidistributed in $X = \Z/q\Z \times [0,1] \times (\R/\Z)^d$ as $n$ ranges in $\{1,\dots,N\}$. It follows that as $n,n'$ range over $\{1,\dots,N\}$, the pair $(\pi(n), \pi(n'))$ is highly equidistributed in $X \times X$, and in particular \[\E_{n, n' \in \{1,\dots, N\}} F_*(\pi(n), \pi(n')) \approx \int F_*(x,x') d\mu(x) d\mu(x')\] for any Lipschitz function $F_* : X \times X \rightarrow \C$. Applying this with $F_*(x,x') = F(x) F(x') F(x + x')$ gives the stated result. 
\end{proof}

Finally, we require the following simple result which does not mention $\theta$ at all.

\begin{lemma}\label{lem-a7} Let $q \in \N$. Suppose that $w : \Z/q\Z \times [0,1] \rightarrow \C$ has Lipschitz constant at most $M$. Then 
\[ \E_{n \leq N} w(n \mdlem{q}, n/N) = \int w\, d\mu + o_{q,M;N \rightarrow \infty}(1).\]
\end{lemma}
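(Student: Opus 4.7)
The plan is to reduce the statement to a standard one-dimensional Riemann sum estimate on each residue class modulo $q$, exploiting the Lipschitz property of $w$.

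First I would partition $\{1,\dots,N\}$ according to residue class modulo $q$, writing
\[ \E_{n \leq N} w(n \md{q}, n/N) = \frac{1}{N} \sum_{a \in \Z/q\Z} \sum_{\substack{n \leq N \\ n \equiv a \,(\mathrm{mod}\, q)}} w(a, n/N). \]
For each fixed $a$, the relevant $n$'s form an arithmetic progression with common difference $q$ and cardinality $N/q + O(1)$, so the points $n/N$ form a set in $[0,1]$ with consecutive spacing $q/N$.

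Next, I would approximate each inner sum by a Riemann integral. Since $w(a, \cdot) : [0,1] \to \C$ is $M$-Lipschitz, standard Riemann sum estimates give
\[ \frac{q}{N} \sum_{\substack{n \leq N \\ n \equiv a \,(\mathrm{mod}\, q)}} w(a, n/N) = \int_0^1 w(a, y)\, dy + O(Mq/N), \]
the error being controlled by the Lipschitz oscillation of $w(a,\cdot)$ on each subinterval of length $q/N$ together with boundary contributions of size $O(q/N)$. Summing over $a \in \Z/q\Z$ and dividing by $q$ yields
\[ \E_{n \leq N} w(n \md{q}, n/N) = \frac{1}{q}\sum_{a \in \Z/q\Z} \int_0^1 w(a,y)\,dy + O(Mq^2/N), \]
and the first term on the right is exactly $\int w\, d\mu$ by the definition of $\mu$ as the product of the uniform measure on $\Z/q\Z$ and Lebesgue measure on $[0,1]$.

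Since the error term $O(Mq^2/N)$ tends to $0$ as $N \to \infty$ for $q,M$ fixed, the conclusion follows. The only real step is the one-dimensional Lipschitz Riemann sum bound, which is entirely routine; there is no substantial obstacle, and this explains why the result is stated without proof in the appendix.
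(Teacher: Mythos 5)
Your proof is correct and takes essentially the same approach as the paper's: split $\{1,\dots,N\}$ into the residue classes $P_a = \{n \leq N : n \equiv a \md{q}\}$ and then invoke a Riemann-sum estimate on each $P_a$, using the Lipschitz bound on $w(a,\cdot)$ to control the error. (A minor bookkeeping point: the total error you carry is $O(Mq/N)$, not $O(Mq^2/N)$, since each of the $q$ inner averages already carries the $1/q$ weight; this does not affect the conclusion.)
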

\begin{proof}[Proof sketch] Split into progressions $P_a = \{n \leq N: n \equiv a \md{q}\}$. Then one need only show that
\[  \E_{n \in P_a} w(a, n/N) = \int_0^1 w(a,y) dy + o_{M; N \rightarrow \infty}(1), \]
which is fairly obvious from the definition of the Riemann integral.
\end{proof}

\emph{Properties of the Gowers $U^2$-norm.} The statement of the arithmetic regularity lemma involved the Gowers $U^2(N)$-norm of a function. Here we recall some basic properties of this norm, whose proofs may be found in several places. We begin by recalling the definition. For a fuller discussion, see \cite{green-tao-arithregularity}. 

\begin{definition}
Let $f : \{1,\dots, N\} \rightarrow \C$ be a function. Then we define $\| f \|_{U^2(N)} = \| f \|_{U^2(G)}/\| 1_{\{1,\dots,N\}} \|_{U^2(G)}$, where $G = \Z/N'\Z$ for some arbitrary $N' > 4N$ and
\[ \| f \|_{U^2(G)}^4 = \E_{x, h_1, h_2 \in G} f(x) \overline{f(x+h_1) f(x+h_2)} f(x + h_1 + h_2).\]
\end{definition}
In this definition, $f$ is regarded (by abuse of notation) as a function on $G$ by defining $f(x) = 0$ if $x \in G \setminus \{1,\dots, N\}$, where $\{1,\dots, N\}$ is regarded as embedded in $G$ in the natural manner. It is not hard to see that this definition does not depend on the exact choice of $N'$. Introducing the group $G$ is a technical device which is useful in several parts of the theory. 

We begin with a standard lemma. 

\begin{lemma}\label{gowers-progressions}
Suppose that $f : \{1,\dots,N\} \rightarrow \C$ is a function. Then $|\E_{n \leq N} f(n)| \ll \| f \|_{U^2(N)}$.
More generally suppose that $P \subset \{1,\dots,N\}$ is a progression of length at least $\eta N$. Then $|\E_{n \in P} f(n)| \ll \eta^{-1} \| f \|_{U^2(N)}$.
\end{lemma}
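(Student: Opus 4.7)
The plan is to reduce everything to the Fourier identity for the $U^2$-norm on $G = \Z/N'\Z$, where we take $N'$ to be a prime chosen in $(4N, 8N]$ (by Bertrand), so that the restriction to $\{1,\dots,N\}$ of any nonzero $d \leq N$ is coprime to $N'$. Writing $\hat f(\xi) = \E_{x \in G} f(x) e(-\xi x/N')$, expanding the cube definition and using orthogonality gives the standard identity $\|f\|_{U^2(G)}^4 = \sum_\xi |\hat f(\xi)|^4$; in particular $|\hat f(\xi)| \leq \|f\|_{U^2(G)}$ for every $\xi$. A direct count of quadruples $(a,b,c,d) \in \{1,\dots,N\}^4$ with $a+d=b+c$ (which equals $\sum_s r(s)^2 \sim \tfrac{2}{3} N^3$) yields $\|1_{\{1,\dots,N\}}\|_{U^2(G)} \asymp (N/N')^{3/4}$, so that $\|f\|_{U^2(G)} \asymp (N/N')^{3/4} \|f\|_{U^2(N)}$.

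For the first assertion, extending $f$ by zero to $G$ gives $\E_{n \leq N} f(n) = (N'/N) \hat f(0)$, hence
\[
  |\E_{n \leq N} f(n)| \leq (N'/N) \|f\|_{U^2(G)} \ll (N'/N)(N/N')^{3/4} \|f\|_{U^2(N)} \ll \|f\|_{U^2(N)},
\]
as desired.

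For the progression statement, write $P = \{a, a+d, \dots, a+(L-1)d\}$ with $L \geq \eta N$, Fourier-expand $f$, and sum the resulting geometric series:
\[
  \sum_{n \in P} f(n) = \sum_\xi \hat f(\xi)\, e(\xi a/N') \cdot \frac{e(\xi d L/N') - 1}{e(\xi d/N') - 1},
\]
so that $|\sum_{n \in P} f(n)| \leq \sum_\xi |\hat f(\xi)|\, \phi(\xi)$ with $\phi(\xi) := \min\bigl(L,\, O(\|\xi d/N'\|_{\R/\Z}^{-1})\bigr)$. Applying H\"older's inequality with exponents $4$ and $4/3$,
\[
  \Bigl|\sum_{n \in P} f(n)\Bigr| \leq \Bigl(\sum_\xi |\hat f(\xi)|^4\Bigr)^{1/4} \Bigl(\sum_\xi \phi(\xi)^{4/3}\Bigr)^{3/4} = \|f\|_{U^2(G)} \Bigl(\sum_\xi \phi(\xi)^{4/3}\Bigr)^{3/4}.
\]
Since $N'$ is prime the substitution $\xi \mapsto \xi d$ is a bijection of $G$, so it suffices to bound $\sum_{\xi'} \min(L, N'/|\xi'|)^{4/3}$ (with $|\xi'|$ interpreted mod $N'$ as a value in $[0, N'/2]$).

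The main technical step is the estimate $\sum_{\xi'} \min(L, N'/|\xi'|)^{4/3} \ll N' L^{1/3}$. I would prove this by splitting at the transition point $|\xi'| \sim N'/L$: the $\sim N'/L$ terms where the minimum equals $L$ contribute $\sim (N'/L) L^{4/3} = N' L^{1/3}$, while the tail is bounded by comparison with $\int_{N'/L}^{N'/2} (N'/t)^{4/3} dt \ll N' L^{1/3}$. Feeding this back and dividing by $L$ yields
\[
  |\E_{n \in P} f(n)| \ll (N'/L)^{3/4} \|f\|_{U^2(G)} \ll (N/L)^{3/4} \|f\|_{U^2(N)} \leq \eta^{-3/4} \|f\|_{U^2(N)} \leq \eta^{-1} \|f\|_{U^2(N)},
\]
which is the claim. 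The only non-routine step is the H\"older estimate for $\phi$; everything else is bookkeeping in Fourier analysis on $\Z/N'\Z$.
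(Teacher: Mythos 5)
Your proof is correct and follows essentially the same route as the paper: pass to a prime cyclic group $G=\Z/N'\Z$ with $N'\asymp N$, use the identity $\|f\|_{U^2(G)}=\|\hat f\|_{\ell^4}$, bound the progression average by H\"older against the $\ell^{4/3}$-norm of the Fourier transform of $1_P$, and transfer back via the comparability of $\|\cdot\|_{U^2(G)}$ and $\|\cdot\|_{U^2(N)}$. The only cosmetic differences are that the paper dilates to reduce to common difference $1$ where you substitute $\xi\mapsto\xi d$, and that you keep track of the $\eta$-dependence more carefully (getting the slightly sharper $\eta^{-3/4}$, which the paper implicitly discards).
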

\begin{proof}  We establish the second statement, the first being a special case of it. Fix a prime $N' \in [4N,8N]$ and write $G = \Z/N'\Z$ as in the definition of the Gowers $U^2(N)$-norm. Note that the $U^2(N)$-norm and the $U^2(G)$-norm are comparable up to an absolute constant. We use the inequality
\begin{align*} |\E_{x \in G} f(x) g(x)| 
  & = \left| \sum_r \hat{f}(r) \hat{g}(r) \right| \\
  & \leq \left(\sum_r |\hat{f}(r)|^4 \right)^{1/4} \left(\sum_r |\hat{g}(r)|^{4/3} \right)^{3/4}  \\ 
  & = \| f \|_{U^2(G)} \left(\sum_r |\hat{g}(r)|^{4/3} \right)^{3/4}.
\end{align*}
Here the Fourier transform $\hat{f}(r) = \E_{x \in G} f(x) e(-rx/N')$ is the discrete Fourier transform on $G$, and we have used H\"older's inequality and the well-known fact (see, for example, \cite[Chapter 11]{tv}) that $\| f \|_{U^2(G)} = \| \hat{f} \|_{\ell^4}$. Taking $g = 1_P$, the characteristic function of the progression $P$, it suffices to show that $\sum_r |\hat{g}(r)|^{4/3} = O(1)$. Dilating, we may assume that the common difference of $P$ is $1$. But then we have, upon summing the geometric progression, the bound $|\hat{g}(r)| \ll \min(1, |r|^{-1})$, from which the result follows immediately.\end{proof}

The next lemma is a more complicated result along similar lines.

\begin{lemma}\label{gowers-orthog-struct}
Suppose that $d, q, M \in \N$ and that $\delta > 0$. There for some $\delta_*=\delta_*(d, q, M,\delta)>0$ and all sufficiently large $N \geq N_0(d,q,M,\delta)$ the following is true. Let $f(n) = F(n\md q, n/N,\theta n)$, where $F:\Z/q\Z \times [0, 1] \times (\R / \Z)^d\to[0,1]$ is $M$-Lipschitz and $\theta \in (\R / \Z)^d$, and suppose $g : \{1,\dots,N\} \to [-1,1]$ satisfies $\|g\|_{U^2(N)} \leq \delta_*$.  Then $\left| \E_{n \in N} f(n) g(n) \right| \leq \delta$.
\end{lemma}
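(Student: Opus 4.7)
The plan is to Fourier-decompose $F$ into a bounded number of pure exponentials, and then reduce the problem to a uniform exponential-sum bound of the form $|\E_{n \leq N} g(n) e(\alpha n)| \ll \|g\|_{U^2(N)}$. First I would apply the same approximation device used in the proof of Lemma \ref{distribution-integral}: extend $F$ to an even $M$-Lipschitz function on $\Z/q\Z \times [-1,1] \times (\R/\Z)^d$, and truncate its Fourier expansion to obtain some $M_0 = O_{q,d,M,\delta}(1)$, coefficients $|c_{a,m,\mathbf{m}}| = O_{q,d,M}(1)$, and a uniform approximation
\[
\left| F(x,y,z) - \sum_{\substack{a \in \Z/q\Z,\ |m| \leq M_0 \\ |\mathbf{m}|_\infty \leq M_0}} c_{a,m,\mathbf{m}}\, \phi_{a,m,\mathbf{m}}(x,y,z)\right| \leq \delta/2,
\]
where the $\phi_{a,m,\mathbf{m}}$ are as in \eqref{phi}. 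Substituting $(x,y,z) = (n \md q, n/N, \theta n)$, each $\phi_{a,m,\mathbf{m}}(n \md q, n/N, \theta n)$ becomes a sum of two pure exponentials $e(\alpha n)$ with $\alpha = a/q \pm m/(2N) + \mathbf{m} \cdot \theta$.

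Next I would combine this with the triangle inequality and $\|g\|_\infty \leq 1$: since $f$ is within $\delta/2$ uniformly of a linear combination of $O_{q,d,M,\delta}(1)$ exponentials $e(\alpha n)$, each with coefficient of size $O_{q,d,M}(1)$, the uniform-approximation error contributes at most $\delta/2$ to $|\E_{n \leq N} f(n) g(n)|$. Choosing $\delta_*$ small enough in terms of $q,d,M,\delta$ that the total exponential contribution is at most $\delta/2$, the problem reduces to showing
\[
|\E_{n \leq N} g(n) e(\alpha n)| \ll \|g\|_{U^2(N)}
\]
uniformly in $\alpha \in \R/\Z$.

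The main obstacle is this last exponential-sum bound, because $e(\alpha \cdot)$ is not a character on the embedding group $G = \Z/N'\Z$ for generic $\alpha$, so the naive modulation-invariance of the $U^2$-norm is unavailable. The remedy imitates the proof of Lemma \ref{gowers-progressions} almost verbatim. Fix a prime $N' \in [4N, 8N]$, regard $g$ as a function on $G$ supported in $\{1,\dots,N\}$, and set $\psi_\alpha(x) = e(\alpha x)\, 1_{\{1,\dots,N\}}(x)$. Then $\E_{n \leq N} g(n) e(\alpha n) = (N'/N)\, \E_{x \in G} g(x) \psi_\alpha(x)$, and the Fourier-side H\"older inequality used in the proof of Lemma \ref{gowers-progressions} gives $|\E_{x \in G} g(x) \psi_\alpha(x)| \leq \|g\|_{U^2(G)}\, \|\hat\psi_\alpha\|_{\ell^{4/3}}$. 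Summing the geometric progression yields $|\hat\psi_\alpha(r)| \ll (N')^{-1}\min(N, \|\alpha - r/N'\|_{\R/\Z}^{-1})$, and a routine estimate using the uniform spacing of $\{r/N' : r \in G\}$ in $\R/\Z$ gives $\|\hat\psi_\alpha\|_{\ell^{4/3}} = O(1)$ uniformly in $\alpha$. Combining with $\|g\|_{U^2(G)} \asymp \|g\|_{U^2(N)}$ and $N'/N \leq 8$ yields the required exponential-sum bound and completes the proof.
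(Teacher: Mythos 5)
Your proposal is correct and follows the paper's strategy (Fourier-truncate $F$ into finitely many $\phi_{a,m,\mathbf m}$, reduce to a uniform bound on $|\E_{n\leq N} g(n)e(\alpha n)|$, then apply the triangle inequality) for every step except the key exponential-sum estimate, where you take a genuinely different route. The paper proves $|\E_{n\leq N} g(n)e(\beta n)| \ll \|g\|_{U^2(N)}$ via the Gowers--Cauchy--Schwarz inequality, by writing $e(\beta n) = e(\beta(n+h_1+h_2))e(-\beta h_1)e(-\beta h_2)$ and averaging over $h_1,h_2$ --- the standard ``linear phases do not correlate with $U^2$-small functions'' argument. You instead embed into $G=\Z/N'\Z$, set $\psi_\alpha = e(\alpha\cdot)1_{\{1,\dots,N\}}$, apply the Fourier-side H\"older inequality $|\E_{x\in G} g\psi_\alpha| \leq \|g\|_{U^2(G)}\|\hat\psi_\alpha\|_{\ell^{4/3}}$, and show $\|\hat\psi_\alpha\|_{\ell^{4/3}} = O(1)$ uniformly in $\alpha$ by summing the geometric series and using the $1/N'$-spacing of the dual frequencies. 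This is precisely the argument of Lemma~\ref{gowers-progressions} transplanted to the modulated interval indicator, and it works: at most $O(1)$ frequencies $r$ satisfy $\|\alpha - r/N'\|\leq 1/N'$ (each contributing $O(1)$ since $N/N'\leq 1/4$), while the remaining ones contribute $\ll\sum_{k\geq 1}k^{-4/3}=O(1)$. The two methods are comparable in difficulty; yours has the virtue of reusing machinery already in play from the proof of Lemma~\ref{gowers-progressions}, while the paper's Gowers--Cauchy--Schwarz argument is perhaps more directly portable to higher-order $U^s$ settings. Either is acceptable.
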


In other words, the ``structured objects'' and the ``pseudorandom objects'' of the regularity lemma do not correlate.

\begin{proof}[Proof sketch]
As in the proof of Lemma \ref{distribution-integral}, the idea is to decompose $F$ as a Fourier expansion of length $O_{\delta, d, q, M}(1)$, plus a uniformly small error:
\[ F = \sum_{a, m, \mathbf{m}} c_{a, m, \mathbf{m}} \phi_{a, m, \mathbf{m}}  + F_{\sml}, \]
where $\phi_{a,m,\mathbf{m}}$ is given by \eqref{phi}, $|c_{a, m, \mathbf{m}}| \leq 1$ and $\|F_{\sml}\|_{\ell^\infty} \leq \tfrac{1}{2}\delta$. Note
\begin{equation}\label{eq77}
  \phi_{a,m,\mathbf{m}}(n \md{q}, n/N, \theta n) = e(\beta_+ n) + e(\beta_- n),
\end{equation}
where $\beta_\pm = \frac{r}{q} \pm \frac{m}{2N} + \mathbf{m} \cdot \theta$ (though this exact form is unimportant). However, writing $e(\beta n) = e(\beta (n + h_1 + h_2)) e(-\beta h_1) e(-\beta h_2)$ and averaging over $h_1, h_2$ it follows by the Gowers-Cauchy-Schwarz inequality \cite[Chapter 11]{tv} that $|\E_{n \leq N} e(\beta n) g(n)| \ll \| g \|_{U^2(N)}$ uniformly in $\beta \in \R$, so
\[ \left|\E_{n\leq N} \phi_{a,m,\mathbf{m}}(n\md q, n/N,\theta n) g(n)\right| \ll \delta_* \]
uniformly in $a,m,\mathbf{m}$. It follows that
\begin{align*}
	\left| \E_{n \leq N} f(n) g(n) \right|
	&= \left| \E_{n \in N} F(n\, \md{q}, n / N, \theta n) g(n) \right| \\
	&\leq \sum_{a,m,\mathbf{m}} \left|\E_{n\leq N} \phi_{a,m,\mathbf{m}}(n\md q,n/N,\theta n) g(n)\right| + \tfrac{1}{2}\delta\\
  &\leq O_{\delta, d, q, M}(\delta_*) + \delta / 2,
\end{align*}
so for $\delta_*$ sufficiently small depending on $\delta$, $d$, $q$ and $M$, $|\E_{n\leq N}f(n)g(n)|\leq\delta$.
\end{proof}

\emph{Miscellany.} We turn now to some rather miscellaneous lemmas. Recall that if $f : \{1,\dots,N\}\to\C$ is a function then $T(f) = \E_{n,n' \leq N} f(n) f(n') f(n + n')$.

\begin{lemma}\label{lem-a9} Suppose that $f, \tilde f : \{1,\dots, N\} \to [-1,1]$ are functions. Then $|T(f)-T(\tilde{f})| \leq 7\|f-\tilde{f}\|_{\ell^1(N)}$ and $|T(f) - T(\tilde{f})| \ll \| f - \tilde{f} \|_{U^2(N)}$.\end{lemma}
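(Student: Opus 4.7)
Set $g = f - \tilde f$, which takes values in $[-2,2]$. By the trilinearity of $T$ (viewed as a function of the three ``slots'' in the definition \eqref{t-N-def}) we have
\[
T(f) - T(\tilde f) \;=\; \sum_{\emptyset \neq S \subseteq \{1,2,3\}} \Lambda_S,
\]
where $\Lambda_S = \E_{n,n' \leq N} h_1(n) h_2(n') h_3(n+n')$ with $h_i = g$ if $i \in S$ and $h_i = \tilde f$ otherwise, so there are exactly $7$ such terms. The plan is to bound each $\Lambda_S$ individually by $\|g\|_{\ell^1(N)}$ for the first inequality, and by $O(\|g\|_{U^2(N)})$ for the second.

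For the first bound, I would use that $|\tilde f| \leq 1$ and $|g| \leq 2$ pointwise, and treat according to whether the factor $g$ appears in the first/second slot or in the ``sum'' slot. If $g$ appears in slot $1$ (the other cases being symmetric), then
\[
|\Lambda_S| \;\leq\; \frac{1}{N^2} \sum_{n \leq N} |g(n)| \sum_{n' \leq N} 1 \;=\; \|g\|_{\ell^1(N)}.
\]
If instead $g$ appears only in the third slot, then after reindexing $m = n + n'$ the number of pairs $(n,n')$ with $n+n' = m$ is at most $N$, so
\[
|\Lambda_S| \;\leq\; \frac{1}{N^2} \sum_{m \leq 2N} N |g(m)| \;=\; \|g\|_{\ell^1(N)}
\]
(where we use $g \equiv 0$ outside $\{1,\dots,N\}$). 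Summing over the seven nonempty $S$ yields the constant $7$.

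For the $U^2$-bound, the key input is the well-known Gowers--Cauchy--Schwarz estimate: for any $h_1, h_2, h_3 : \{1,\dots,N\} \to [-1,1]$,
\[
\bigl|\E_{n,n' \leq N} h_1(n) h_2(n') h_3(n+n')\bigr| \;\ll\; \min_i \|h_i\|_{U^2(N)}.
\]
This is standard and can be proved by embedding into $G = \Z/N'\Z$ with $N' \in [4N, 8N]$ prime, applying two Cauchy--Schwarz steps on $\E_{x, h_1, h_2} h_1(x) h_2(x+h_1)\cdots$ (as in the derivation of Lemma \ref{gowers-progressions}, or equivalently via the Fourier identity $\|h\|_{U^2(G)} = \|\hat h\|_{\ell^4}$). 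Applying this to each $\Lambda_S$ and taking $i$ to be one of the slots occupied by $g$, we obtain $|\Lambda_S| \ll \|g\|_{U^2(N)}$; summing the seven terms finishes the proof.

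The only nontrivial point is ensuring that the Gowers--Cauchy--Schwarz bound really applies uniformly on $\{1,\dots,N\}$ rather than on the ambient cyclic group $G$. This is why the bound is only $\ll$ and not $\leq$: passing between the $U^2(N)$ and $U^2(G)$ norms costs an absolute constant, as noted in the proof of Lemma \ref{gowers-progressions}. Otherwise both inequalities are routine, once one sets up the trilinear decomposition.
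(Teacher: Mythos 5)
Your argument is essentially the paper's: expand $T$ multilinearly into eight terms, bound the seven cross terms via the triangle inequality for the $\ell^1$ estimate and via the generalised von Neumann / Gowers--Cauchy--Schwarz inequality for the $U^2$ estimate. One small wrinkle (present in the paper's write-up as well as yours): in the terms $\Lambda_S$ where $g$ occupies more than one slot, the pointwise bound $|g|\leq 2$ on the remaining $g$-slots costs an extra factor, so the expansion as written actually gives $13\|g\|_{\ell^1(N)}$ rather than $7\|g\|_{\ell^1(N)}$; a telescoping decomposition $T(f)-T(\tilde f)=T(g,f,f)+T(\tilde f,g,f)+T(\tilde f,\tilde f,g)$ would give the cleaner constant $3$ and sidestep the issue, though the precise constant is irrelevant to the application in Section~\ref{sec3}.
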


Recalling that $\|\cdot\|_{\ell^1(N)}\leq\|\cdot\|_{\ell^2(N)}\leq\|\cdot\|_{\infty}$, we also have $|T(f)-T(\tilde{f})|\leq 7\|f-\tilde{f}\|_{\ell^2(N)}$ and $|T(f)-T(\tilde{f})|\leq 7\|f-\tilde{f}\|_\infty$.

\begin{proof}
Write $g = f - \tilde{f}$. Writing $f = \tilde{f} + g$, $T(f)$ may be expanded as a sum of $8$ terms, one of which is $T(\tilde f)$, the other 7 of which are trilinear terms of the form $\E_{n,n'} f_1(n) f_2(n') f_3(n + n')$ with at least one of the $f_i$ being equal to $g$. Using the hypothesis that all the $f_i$'s are bounded by $1$, the estimate
\[ | \E_{n, n'} f_1(n) f_2(n') f_3(n + n')| \leq \| f_i \|_{\ell^1(N)}\] is an easy consequence of the triangle inequality.

The case of the Gowers $U^2(N)$-norm is dealt with in a similar way, using instead the bound
\[ |\E_{n,n'} f_1(n) f_2(n') f_3(n + n')| \ll \| f_i \|_{U^2(N)}.\] This is an instance of a \emph{generalised von Neumann theorem}, for which there are many references including \cite[Lemma 11.4]{tv}.\end{proof}

\begin{lemma}\label{lip-conv}
Let $X$ be a compact metric abelian group endowed with a translation-invariant metric $d$ and a translation-invariant probability measure $\mu$. Suppose that $f : X \rightarrow \C$ is a function with $\| f \|_{\Lip} \leq K$, thus $|f(x) - f'(x)| \leq Kd(x,x')$. Let $g : X \rightarrow \C$ be any continuous function with $\| g \|_{\infty} \leq 1$. Then the convolution $f \ast g(x) = \int f(y) g(x - y) d\mu(y)$ also has Lipschitz constant at most $K$.
\end{lemma}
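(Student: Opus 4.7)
The plan is to rewrite the convolution so that the point $x$ at which we differentiate appears only inside the Lipschitz factor $f$, and then use translation invariance of both the metric $d$ and the measure $\mu$ to transfer the estimate.

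First I would perform the change of variables $z = x - y$ in the definition of $f \ast g$. Since $\mu$ is translation invariant, this gives
\[ f \ast g(x) = \int f(x - z)\, g(z)\, d\mu(z), \]
and the analogous identity holds at $x'$. Subtracting and applying the triangle inequality under the integral yields
\[ |f \ast g(x) - f \ast g(x')| \leq \int |f(x - z) - f(x' - z)|\, |g(z)|\, d\mu(z). \]

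Next I would bound the integrand pointwise. By the Lipschitz hypothesis on $f$ and translation invariance of $d$ we have $|f(x - z) - f(x' - z)| \leq K\, d(x - z, x' - z) = K\, d(x, x')$, while $|g(z)| \leq \|g\|_\infty \leq 1$. Since the resulting bound $K d(x,x')$ is independent of $z$ and $\mu$ is a probability measure, integrating gives $|f \ast g(x) - f \ast g(x')| \leq K\, d(x, x')$, which is exactly the claim.

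There is essentially no obstacle here; the only point to double-check is that the change of variables is valid, which is immediate from the translation invariance of $\mu$ together with the continuity of $g$ and the boundedness of $f$ (which holds automatically on the compact group $X$ from the Lipschitz hypothesis).
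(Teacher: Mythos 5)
Your proof is correct and is essentially the same as the paper's: the paper also tacitly performs the change of variables to write $f \ast g(x) - f \ast g(x') = \int (f(x-y)-f(x'-y))g(y)\,d\mu(y)$ and then bounds the integrand using the Lipschitz hypothesis, translation invariance of $d$, and $\|g\|_\infty \leq 1$. You are merely slightly more explicit about the substitution step.
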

\begin{proof}
We have
\begin{align*}
f\ast g(x) - f\ast g(x') & = \int (f(x- y) - f(x' - y)) g(y) d\mu(y) \\ & \leq \int |f(x - y) - f(x' - y)| d\mu(y) \\ & \leq K \sup_y d(x - y, x' - y) = K d(x, x').\qedhere
\end{align*}
\end{proof}

The next lemma is an instance of Young's inequality, but we include the (short) proof for ease of reference.

\begin{lemma}\label{l1-conv}
Let $P, P' \subset \{1,\dots, N\}$ be arithmetic progressions with the same length. Let $f : P \rightarrow \C$ and $g : P' \rightarrow \C$ be two functions. Suppose that both are bounded pointwise by $1$ and that either $\E_{n \in P} |f(n)| \leq \eta$ or $\E_{n \in P'} |g(n)| \leq \eta$. Write $f \ast g (n) = \frac{1}{N}\sum_m f(m) g(n-m)$. Then $\| f \ast g\|_{\infty} \leq \eta|P|/N$.
\end{lemma}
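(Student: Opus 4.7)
The plan is to prove this by a direct pointwise estimate using the triangle inequality; there is no real obstacle since this is essentially a routine instance of Young's inequality $\|f\ast g\|_\infty \leq \|f\|_1\|g\|_\infty$, specialised to the normalisation used here.

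First I would fix an arbitrary $n \in \Z$ and unfold the definition, writing
\[
|f \ast g(n)| \;=\; \left|\frac{1}{N}\sum_{m} f(m)\, g(n-m)\right| \;\leq\; \frac{1}{N}\sum_{m \in P,\; n-m \in P'} |f(m)|\,|g(n-m)|,
\]
with the sum restricted to those $m$ for which both $f$ and $g$ are defined.

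Next I would split into the two cases of the hypothesis. If $\E_{m \in P}|f(m)|\le\eta$, I use the bound $|g(n-m)|\le 1$ pointwise to obtain
\[
|f\ast g(n)| \;\leq\; \frac{1}{N}\sum_{m \in P} |f(m)| \;=\; \frac{|P|}{N}\, \E_{m \in P}|f(m)| \;\leq\; \frac{\eta\,|P|}{N}.
\]
If instead $\E_{m' \in P'}|g(m')|\le \eta$, I perform the change of variable $m' = n-m$, use $|f(m)|\le 1$, and obtain by the same reasoning the bound $|f\ast g(n)| \le \eta |P'|/N$. Since the two progressions have the same length $|P| = |P'|$, both cases yield $|f\ast g(n)| \le \eta |P|/N$, and taking the supremum over $n$ finishes the proof.

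The only mild subtlety is to be careful that the sum defining $f\ast g$ implicitly restricts $m$ to $P$ (where $f$ is defined) and $n-m$ to $P'$ (where $g$ is defined), so that the trivial pointwise bound on the ``other'' factor applies on the entire range of summation; this is why the hypothesis $|P| = |P'|$ is enough to make the two cases give identical bounds.
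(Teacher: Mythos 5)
Your proof is correct and takes essentially the same approach as the paper's: a direct triangle-inequality estimate, bounding the other factor pointwise by $1$, with the two hypotheses handled symmetrically via $|P|=|P'|$.
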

\begin{proof}
Suppose that $\E_{n \in P} |f(n)| \leq \eta$. Then we have
\[ |f \ast g(n)|  = \frac{1}{N}\left|\sum_m f(m) g(n - m)\right| \leq \frac{1}{N}\sum_m |f(m)| \leq \eta |P|/N.\]
The case $\E_{n \in P} |g(n)| \leq \eta$ is similar. 
\end{proof}

\begin{lemma}\label{gowers-error}
Let $f, \tilde f,g : \{1,\dots,N\} \to [-1,1]$ be functions such that $\| \tilde f - f \|_{U^2(N)} \leq \delta$. Then $|\tilde f \ast g(d) - f \ast g(d)| \leq 4\delta^{1/2} $ for all except at most $40\delta N$ values of $d$. 
\end{lemma}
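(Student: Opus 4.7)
Write $h=\tilde f-f$, a function on $\{1,\dots,N\}$ with $\|h\|_\infty\leq 2$ and $\|h\|_{U^2(N)}\leq\delta$. The plan is to establish an $\ell^2$ bound of the form
\[\sum_{d\in\Z}|h\ast g(d)|^2 \ll N\delta^2\]
(with the convolution on $\{1,\dots,N\}$ extended trivially to $\Z$) and then conclude by Markov's inequality: the set of $d\in\Z$ where $|h\ast g(d)|^2>16\delta$ would have size at most $O(N\delta)$, which for appropriate absolute constants is comfortably below $40\delta N$.

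To compute $\sum_d|h\ast g(d)|^2$, expand the square and switch the order of summation to obtain
\[\sum_{d\in\Z}|h\ast g(d)|^2 = \frac{1}{N^2}\sum_{m,m'}h(m)h(m')\sum_d g(d-m)g(d-m') = \frac{1}{N^2}\sum_j H(j)G(j),\]
where $H(j)=\sum_m h(m)h(m+j)$ and $G(j)=\sum_k g(k)g(k+j)$ are the autocorrelations. By Cauchy--Schwarz, it suffices to bound $\sum_j H(j)^2$ and $\sum_j G(j)^2$ separately.

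The key observation is that $\sum_j H(j)^2 = \sum_{a+b=c+d} h(a)h(b)h(c)h(d)$, summed over quadruples in $\{1,\dots,N\}^4$. Embedding $\{1,\dots,N\}$ into $G=\Z/N'\Z$ for a prime $N'\in(4N,8N]$, the assumption $N'>4N$ guarantees that the integer identity $a+b=c+d$ coincides with its reduction mod $N'$ (no wraparound), so the sum equals $(N')^3\|h\|_{U^2(G)}^4$. Using the defining identity $\|h\|_{U^2(G)}=\|h\|_{U^2(N)}\cdot\|1_{\{1,\dots,N\}}\|_{U^2(G)}$ together with the standard bound $\|1_{\{1,\dots,N\}}\|_{U^2(G)}^4\asymp(N/N')^3$ (a short Fourier computation, as in the proof of Lemma~\ref{gowers-progressions}), we conclude $\sum_j H(j)^2\ll N^3\delta^4$. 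The same argument applied to $g$ gives $\sum_j G(j)^2\ll N^3$ since $\|g\|_{U^2(N)}\ll\|g\|_\infty\leq1$. Combining via Cauchy--Schwarz yields $\sum_d|h\ast g(d)|^2\leq\tfrac{1}{N^2}(N^3\delta^4)^{1/2}(N^3)^{1/2}\ll N\delta^2$, and Markov finishes the argument.

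The proof is essentially a routine autocorrelation/Plancherel calculation; the only thing requiring some care is the passage from $\{1,\dots,N\}$ to $G=\Z/N'\Z$, where one must check that the additive constraint $a+b=c+d$ survives without wraparound (this is precisely why the definition of $\|\cdot\|_{U^2(N)}$ requires $N'>4N$) and must track the normalization factors carefully to make the implicit constants small enough for the quantitative conclusion.
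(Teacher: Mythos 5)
Your proof is correct and is essentially the Fourier-free rephrasing of the paper's argument: the paper passes to $G=\Z/N'\Z$ and bounds $\E_{x\in G}|h\ast g(x)|^2 = \sum_r|\hat h(r)|^2|\hat g(r)|^2 \leq \|\hat h\|_4^2\|\hat g\|_4^2 = \|h\|_{U^2(G)}^2\|g\|_{U^2(G)}^2$ by H\"older, which is exactly your Cauchy--Schwarz step $\sum_j H(j)G(j)\leq\|H\|_2\|G\|_2$ read through Wiener--Khinchin (since $\widehat{H}\propto|\hat h|^2$ and $\sum_j H(j)^2\propto\|\hat h\|_4^4$). Your tracking of constants is sound --- the computation gives roughly $\sum_d|h\ast g(d)|^2\leq\tfrac{2}{3}\delta^2 N$, well within the stated $40\delta N$ after Markov --- and the no-wraparound observation for $N'>4N$ matches the paper's convention.
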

\begin{proof}
The functions $f$ and $g$ may be regarded as functions on $G = \Z/N'\Z$, where $N' = 4N$, in a natural way. Let $h = \tilde f - f$. Then
\begin{align*}
\E_{x \in G} | \E_{y \in G} h(y) g(x-y)|^2 & = \sum_r |\hat{h}(r)|^2 |\hat{g}(r)|^2 \\ & \leq \left(\sum_r |\hat{h}(r)|^4 \right)^{1/2} \left(\sum_r |\hat{g}(r)|^4 \right)^{1/2} \\ & = \| h \|_{U^2(G)}^2 \| g \|_{U^2(G)}^2 \leq \| h \|_{U^2(G)}^2 \leq 10 \delta^2.
\end{align*} Once again we have used basic facts about the $U^2(G)$-norm and the discrete Fourier transform as may be found in \cite[Section 4.2]{tv}.
Replacing the expectations over $G$ by sums we obtain
\[ \sum_x |h\ast g(x)|^2 \leq 640 \delta^2 N.\] Thus there cannot be more than $40\delta N$ values of $x$ for which $|h \ast g(x)| \geq 4 \delta^{1/2}$.
\end{proof}

\bibliography{sumfree}{}
\bibliographystyle{alpha}
\end{document}